\newtheorem{theorem}{Theorem}[section]
\newtheorem{lemma}[theorem]{Lemma}
\newtheorem{corollary}[theorem]{Corollary}
\newtheorem{proposition}[theorem]{Proposition}
\theoremstyle{definition}
\newtheorem{definition}[theorem]{Definition}
\theoremstyle{remark}
\newtheorem{remark}[theorem]{Remark}
\newcommand{\R}{\mathbb{R} }
\newcommand{\Z}{\mathbb{Z}}
\pgfplotsset{compat=1.10}
\tikzset{cross/.style={cross out, draw=black, minimum size=2.5*(#1-\pgflinewidth), inner sep=2pt, outer sep=0.5pt},
	cross/.default={1pt}}
\newcommand{\boundellipse}[3]
{(#1) ellipse (#2 and #3)
}
\newcommand{\E}{\ensuremath{\mathbb{E}}}
\newcommand{\N}{\ensuremath{\mathbb{N}}}
\newcommand{\sign}[1]{\mathrm{Sign} }
\newcommand{\la}{\lambda}
\newcommand{\Exp}{\text{Exp}}
\newcommand{\PPP}{\text{PPP}}
\numberwithin{equation}{section}
\renewcommand{\P}{\mathbb P}
\renewcommand{\E}{\mathbb E}
\newcommand{\ep}{\varepsilon}
\title[SIRS on star graphs]{Optimal bound for survival time of the SIRS process\\ on star graphs}
\author{Phuc Lam, Oanh Nguyen, Iris Yang}
\address{Division of Applied Mathematics\\ Brown University\\  Providence, RI 02906, USA}
\email{phuc\_lam@brown.edu}
\address{Division of Applied Mathematics\\ Brown University\\  Providence, RI 02906, USA}
\email{oanh\_nguyen1@brown.edu}
\address{11th grader\\ Barrington High School\\ Barrington, RI 02806, USA}
\email{irisyang6376@gmail.com}
\date{}
\thanks{Nguyen is supported by NSF Grant DMS–2246575 and the Salomon grant.}
\let\oldtocsection=\tocsection
\let\oldtocsubsection=\tocsubsection
\let\oldtocsubsubsection=\tocsubsubsection
\renewcommand{\tocsection}[2]{\hspace{0em}\oldtocsection{#1}{#2}}
\renewcommand{\tocsubsection}[2]{\hspace{2em}\oldtocsubsection{#1}{#2}}
\renewcommand{\tocsubsubsection}[2]{\hspace{4em}\oldtocsubsubsection{#1}{#2}}
\begin{document}	
	
	\maketitle
	\begin{abstract}
		We analyze the Susceptible-Infected-Recovered-Susceptible (SIRS) process, a continuous-time Markov chain frequently employed in epidemiology to model the spread of infections on networks. In this framework, infections spread as infected vertices recover at rate 1, infect susceptible neighbors independently at rate $\lambda$, and recovered vertices become susceptible again at rate $\alpha$. This model presents a significantly greater analytical challenge compared to the SIS model, which has consequently inspired a much more extensive and rich body of mathematical literature for the latter. Understanding the survival time, the duration before the infection dies out completely, is a fundamental question in this context. On general graphs, survival time heavily depends on the infection's persistence around high-degree vertices (known as hubs or stars), as long persistence enables transmission between hubs and prolongs the process. In contrast, short persistence leads to rapid extinction, making the dynamics on star graphs, which serve as key representatives of hubs, particularly important to study.
		
		In the 2016 paper by Ferreira, Sander, and Pastor-Satorras, it was conjectured, based on intuitive arguments, that the survival time for SIRS on a star graph with $n$ leaves is bounded above by $(\lambda^2 n)^\alpha$ for large $n$. Later, in one of a few mathematically rigorous results for SIRS, Friedrich, G{\"o}bel, Klodt, Krejca, and Pappik  provided an upper bound of $n^\alpha \log n$, with contains an additional $\log n$ and no dependence on $\lambda$.
		
		We resolve this conjecture by proving that the survival time is indeed of order $(\lambda^2 n)^\alpha$, with matching upper and lower bounds. Additionally, we show that this holds even in the case where only the root undergoes immunization, while the leaves revert to susceptibility immediately after recovery.
		
	\end{abstract}

	\section{Introduction}
	Our goal in this paper is to study the exact order of the survival time of the SIRS process on star graphs, a building block for future studies of the process on random networks.
	
	\vspace{5mm}
	The SIRS contact process models the spread of disease on networks. For a graph $G = (V, E)$, the SIRS process on $G$ with infection rate $\la$, recovery rate $1$, and deimmunization rate $\alpha$ is a continuous-time Markov chain, in which a vertex is either susceptible, infected, or recovered. The process evolves according to the following rules.
	\begin{itemize}
		\item Each infected vertex infects each of its neighbors indepedently at rate $\la$, and is healed at rate $1$.
		\item Each recovered vertex is immune to infection before becoming susceptible again at rate $\alpha$.
		\item Infection, recovery, and deimmunization events in the process happen independently.
	\end{itemize}

\vspace{5mm}
This model accounts for the temporal period of immunity, as observed in some diseases such as with influenza or COVID-19 and has been used to model the spread of epidemics and forest fires with numerous empirical results, such as \cite{kuperman2001small, mollison1986modelling, mollison1985spatial, wang2017spreading} and the references therein. However, the mathematical study of the SIRS process is notoriously challenging, with only a few theoretical results available, including \cite{durrett1991epidemics}  for $\Z^{2}$ and \cite{friedrich2022analysis}. We also refer to Durrett's recent book \cite{durrett2021dynamics} (see Chapter 4) for a relevant discussion.

\vspace{5mm}
	 A variant, more tractable version of this model, which can be thought of as a special case of this process when $\alpha = \infty$ (no immunization), is known as the {\bf SIS proces}s, introduced by Harris in 1974 \cite{harris1974}. Since then, the survival time of the SIS process has been extensively studied on a variety of graphs. On infinite structures, phase-type results for the infection rate $\la$ has been established on the infinite integer lattice $\mathbb{Z}^d$ \cite{harris1974} by Harris, infinite $d$-regular trees by Liggett \cite{liggett96}, Pemantle \cite{pemantle92}, Stacey \cite{stacey96}, $d$-regular trees of depth $h$  by Cranston et al. \cite{cranston2014}, Stacey \cite{stacey01}), the $d$-dimensional finite lattice cube by Durrett-Liu \cite{durrettliu88}, Durrett-Schonmann \cite{durrettschon88}, Mountford \cite{mountford93}), random $d$-regular graphs by Lalley-Su \cite{lalleysu17}, Mourrat-Valesin \cite{mourrat16}, Galton-Watson trees and sparse random graphs by Bhamidi-Nam-Sly and the second author \cite{bhamidi2021, nam2022critical, nguyen2022subcritical}, and Huang-Durrett \cite{huangdurrett20}, together with the references therein.

	\vspace{5mm}
	In this paper, we study {\it star graphs}. To illustrate the importance of the study of contact process on {\it star graphs}, we note that from previous results for SIS, it has been observed that the survival time of the contact process inherits a significant influence by its survival time on local neighborhoods of high-degree vertices. For instance, in Bhamidi--Nam--Nguyen--Sly \cite{bhamidi2021} and Huang--Durrett \cite{huangdurrett20}, it was shown that the sufficient and necessary condition for the SIS on the Galton-Watson tree with offspring distribution $\mu$ exhibits the extinction phase (i.e., dies out with probability 1) is that $\mu$ does not have an exponential tail: there exists $c>0$ such that $\E e^{cD}<\infty$ where $D\sim \mu$. This relates directly to the fact that the SIS survives for an exponentially long time on stars. 
	\begin{theorem}\label{thm:nguyensly2022star}\cite{nguyen2022subcritical}
		Consider the SIS process $(X_t)$ with infection rate $\la$ on the star graph with $n$ leaves, where the root is infected at the beginning. Then when $\la \ge \frac{10^{2.5}}{\sqrt{n}}$, the following holds: with positive probability, the survival time of the SIS process is at least 
		$$ \exp\left( \frac{\la^2 n}{10^5} \right) = \exp\left( \Theta(\la^2 n) \right). $$
	\end{theorem}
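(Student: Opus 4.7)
The plan is to track the number of infected leaves $I_t$ and show two things with constant probability: first, a short build-up phase in which $I_t$ reaches a level of order $\lambda n$, and second, a long persistence phase in which the survival time is controlled by the length of atypically long ``healthy spells'' of the root.

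\textbf{Phase 1 (build-up).} Starting from the root infected and no leaves infected, I would analyze a single root-infected spell, which has duration $\mathrm{Exp}(1)$. During this spell, each susceptible leaf becomes infected at rate $\lambda$ and recovers at rate $1$, so the probability that a given leaf is infected at the end of the spell is $\Theta(\lambda)$. Thus the number of infected leaves $I$ at the end of the first spell stochastically dominates $\mathrm{Bin}(n, c\lambda)$ for some constant $c$. Since $\lambda\sqrt n \ge 10^{2.5}$, concentration of the binomial gives $I \ge c' \lambda n$ with constant probability. If this fails in the first spell, I would iterate: with probability $\approx 1 - e^{-\lambda I}$ the root gets reinfected before all leaves recover, so a constant-probability sequence of cycles suffices to reach the target $I_0 := c'' \lambda n$.

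\textbf{Phase 2 (persistence).} Once $I_t \ge I_0$, I would argue that $I_t$ stays $\ge I_0/2$ for a time of order $\exp(\lambda^2 n / 10^5)$ with constant probability. The heuristic is that $I_t$ has a strong restoring drift toward $\lambda n$: while the root is infected, $I_t$ grows at rate $\approx \lambda n - (1+\lambda) I_t$, which is strongly positive at level $I_0$; while the root is healthy, $I_t$ decays but is reinfected very quickly, since at level $I_t \asymp \lambda n$ the reinfection rate is $\lambda I_t \asymp \lambda^2 n$. Consequently, each healthy spell has typical length $1/(\lambda^2 n)$, and $I_t$ drops by only $\sim 1/\lambda \ll \lambda n$ during it. The dominant mechanism for extinction is a single atypically long healthy spell of length $\gtrsim \log(\lambda n)$, long enough for most leaves to recover. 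The probability of such a spell is $\exp\!\bigl(-\lambda I_t(1-e^{-T})\bigr) = \exp(-\Omega(\lambda^2 n))$. A union bound over $\exp(\Omega(\lambda^2 n))$ cycles of mean length $O(1)$ then gives survival for at least $\exp(\lambda^2 n/10^5)$ with positive probability.

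\textbf{Main obstacle.} The hardest step is making the ``$I_t$ stays $\asymp \lambda n$'' claim rigorous over exponentially many cycles, rather than just heuristically. I would handle this by a Foster--Lyapunov / drift argument: define the stopping time $\tau = \inf\{t : I_t < I_0/2\}$ and compare $I_t \wedge \tau$ to a biased birth-death chain. At levels $I \in [I_0/2, I_0]$, the ratio of the downward to upward transition probabilities is bounded above by a constant strictly less than $1$ (since $\lambda(n-I) \gg I$), and a gambler's-ruin style estimate yields $\tau \ge \exp(\Omega(\lambda n))$ with high probability --- which is more than enough for the target $\exp(\Omega(\lambda^2 n))$. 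The extra care needed is that the transition probabilities depend on the root state, so I would either average over the two-state root process or, more cleanly, couple $I_t$ to the same birth-death chain after showing that the root is infected an overwhelming fraction of the time whenever $\lambda I_t \gg 1$. Plugging in explicit constants, and being careful in the build-up phase where $I$ is small, produces the claimed constant $1/10^5$.
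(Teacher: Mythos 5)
First, note that this statement is not proved in the paper at all: it is quoted as background from \cite{nguyen2022subcritical}, so there is no in-paper argument to compare against, and your proposal has to stand on its own. Its overall shape is reasonable and matches the standard route: a build-up phase pushing the number of infected leaves to $\Theta(\lambda n)$, followed by a persistence phase in which the only plausible extinction mechanism per ``cycle'' (one root-infected spell plus one root-healthy spell) is that all $\Theta(\lambda n)$ infected leaves recover before reinfecting the root, an event of probability $(1+\lambda)^{-\Theta(\lambda n)}=e^{-\Theta(\lambda^2 n)}$, and then a union bound over $e^{c\lambda^2 n}$ cycles. Your Phase~1 and the heuristic part of Phase~2 capture exactly this.

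The genuine gap is in your proposed rigorization of Phase~2. The Foster--Lyapunov/gambler's-ruin comparison you describe, with downward-to-upward ratio bounded by a constant strictly less than $1$ on $[I_0/2,I_0]$, is only valid while the root is infected; and the conclusion you draw from it, $\tau\ge \exp(\Omega(\lambda n))$ with high probability, is in fact \emph{false} in the regime $10^{2.5}/\sqrt{n}\le\lambda=o(1)$, where the true survival time is $e^{\Theta(\lambda^2 n)}\ll e^{\Theta(\lambda n)}$. The reason the naive comparison overshoots is precisely the root-healthy periods: during such a spell the embedded chain of leaf counts has no upward moves at all, and each downward step (a leaf recovery occurring before the root is reinfected) costs only a factor $1/(1+\lambda)=e^{-\Theta(\lambda)}$, not a constant. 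Hence the cheapest extinction path is not a long sequence of steps against a constant-factor bias (cost $e^{-\Theta(\lambda n)}$) but a single healthy spell with $\approx\lambda n$ consecutive recoveries (cost $e^{-\Theta(\lambda^2 n)}$), which is exactly the bottleneck your own heuristic identifies. Your suggested patch --- coupling to the biased chain ``after showing the root is infected an overwhelming fraction of the time'' --- does not repair this, because extinction happens precisely during the rare windows when the root is healthy; averaging over typical root behavior cannot control an event that lives on the atypical windows. The correct rigorization must keep the root state in the bookkeeping and bound, at each root-recovery epoch, the probability $(1+\lambda)^{-I}$ that the current infected leaves all fail to reinfect (together with concentration keeping $I\gtrsim\lambda n$ at those epochs); this yields the per-cycle failure probability $e^{-\Theta(\lambda^2 n)}$ and, with explicit constants, the stated bound $e^{\lambda^2 n/10^5}$ --- but it is a different argument from the state-independent birth-death comparison you propose, which as written both proves too much and fails at the key step.
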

	Roughly speaking, if $\mu$ has an exponential tail $\E e^{cD}<\infty$, then the process stays at the immediate neighborhood of a vertex of high degree $D$ for time $e^{c\lambda^{2}D}$. If $\lambda$ is sufficiently small compared to $c$, the time is not long enough for the contact process to try and reach another high-degree vertex.

	\vspace{5mm}
		Going back to the SIRS, in the 2016 paper \cite[Equation 13]{ferreira2016collective}, Ferreira, Sander and Pastor-Satorras predicted an upper bound for the survival time of the SIRS on an $n$-star graph (i.e., a star graph with $n$ leaves) to be $(\la^{2}n)^{\alpha}$.
	In \cite[Theorem 3.4]{friedrich2022analysis}, Friedrich, G{\"o}bel, Klodt, Krejca and Pappik showed this upper bound, with an extra $\log n$ term and no specific dependence on $\la$. (It was again conjectured in \cite{friedrich2022analysis} that the $\log n$ term is redundant.)
	\begin{theorem}\cite{friedrich2022analysis}\label{thm:friedrichStar}
		Let $G$ be a star graph on $n$ leaves, and $C$ be the SIRS process on the graph $G$ with infection rate $\la$ and deimmunization rate $\alpha$. Let $\tau$ be the survival time of $C$. Then for sufficiently large $n$, the expected survival time is at most
		$$\E\tau \le (\log n + 2)(4n^{\alpha} + 1) = O(n^{\alpha} \log n). $$
	\end{theorem}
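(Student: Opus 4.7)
The plan is to decompose the process into cycles driven by the state of the root. Let a \emph{cycle} begin each time the root enters state $I$ and end either at extinction or at the next such time, decomposed into an $I$-phase of length $T_I\sim\Exp(1)$, an $R$-phase of length $T_R\sim\Exp(\alpha)$ during which no infection can spread (the star has only root--leaf edges and the root is in $R$), and an $S$-phase of length $T_S$ that ends either in a re-infection (opening the next cycle) or in extinction. I aim to show (i) each cycle is terminal with conditional probability at least $n^{-\alpha}/e$ given the past, and (ii) the conditional expected cycle length is $O(\log n)$ for fixed $\alpha$; a conditional Wald-type argument then yields $\E\tau = O(n^\alpha \log n)$.

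For (i), let $A=\{T_R \ge \log n\}$ and let $B$ be the event that every leaf that is $I$ at the start of the $R$-phase recovers within $\log n$ time units. Since $T_R\sim\Exp(\alpha)$ is an independent clock, $\P(A)=e^{-\alpha\log n}=n^{-\alpha}$. Given any $L\le n$ infected leaves at the start of the $R$-phase, their residual lifetimes are independent $\Exp(1)$ random variables by memorylessness, and each survives past $\log n$ with probability $1/n$, so $\P(B\mid A, L)\ge (1-1/n)^n \ge 1/e$ for large $n$. On $A\cap B$, at time $T_I+\log n$ the root is $R$ and no leaf is $I$ (leaves in $S$ or $R$ at time $T_I$ stay non-$I$ throughout the $R$-phase, since the root is not $I$), so extinction has occurred by that time. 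Hence $\P(\text{cycle is terminal}\mid\mathcal{F}_{k-1})\ge n^{-\alpha}/e$, and the number of cycles $N$ is stochastically dominated by a geometric random variable with success probability $n^{-\alpha}/e$, giving $\E N \le e\,n^\alpha$.

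For (ii), $\E T_I=1$ and $\E T_R=1/\alpha$, and in the $S$-phase with $L$ infected leaves the count evolves as a pure-death chain (leaf-recovery rate $L$, absorption via root re-infection at rate $L\la$), yielding the harmonic estimate $\E[T_S\mid L]\le \sum_{k=1}^{L}\frac{1}{k(1+\la)}\le (\log n + 1)/(1+\la)$. Each cycle therefore has conditional expected length $O(\log n + 1/\alpha)$, and the bound $\E\tau \le \E N\cdot \sup_k \E[D_k\mid\mathcal{F}_{k-1}]$ gives $\E\tau = O(n^\alpha \log n)$ for fixed $\alpha$. The main subtlety is that the estimate $\E T_R=1/\alpha$ can be pessimistic when $\alpha$ is very small, but on non-terminal cycles $T_R$ is implicitly conditioned on $T_R<\max_j Y_j$ (else extinction already occurred in the $R$-phase); the identity $\E[e^{-\alpha Y_L}]=\Gamma(L+1)\Gamma(\alpha+1)/\Gamma(L+\alpha+1)$ for $Y_L=\max_{j\le L}\Exp(1)$ can be used to sharpen the probability--length trade-off across all regimes of $\alpha$, though it is not required for the asymptotic form stated in Theorem~\ref{thm:friedrichStar}.
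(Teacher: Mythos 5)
Your argument is essentially correct, but note first that the paper does not prove Theorem \ref{thm:friedrichStar} at all: it is quoted from \cite{friedrich2022analysis}, and the paper's own work (Section \ref{sec:ProofUppBd}) proves the stronger bound of Theorem \ref{thm:mainthm}. Your cycle decomposition (root $I$-phase, $R$-phase, $S$-phase) is the same object as the paper's ``rounds,'' and your two estimates are sound: during the $R$- and $S$-phases no new leaf can be infected on a star, so forcing $T_R\ge \log n$ (probability $n^{-\alpha}$) and all infected leaves to recover within $\log n$ gives a uniform termination probability of order $n^{-\alpha}$ per cycle, and the pure-death description of the $S$-phase gives the harmonic $O(\log n)$ bound on each cycle's conditional expected length; combined via the generalized Wald inequality (the paper's Lemma \ref{lm:GeneralizedWald}, with $N$ a stopping time and a truncation/monotone-convergence step for integrability, which you should state), this yields $\E\tau=O(n^{\alpha}\log n)$. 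Two small repairs: $(1-1/n)^n$ increases \emph{to} $1/e$, so it is not $\ge 1/e$; use $(1-1/n)^n\ge 1/4$ for $n\ge 2$, which only changes the constant. Also your closing remark about conditioning $T_R$ on non-terminal cycles is unnecessary, since you bound $\E(D_k\mid\mathcal F_{k-1})$ without conditioning on the cycle's outcome. The instructive comparison is with the paper's sharper route: instead of the crude event $\{T_R\ge\log n\}$, the paper computes the exact failure probability of a round as $\Theta_\alpha\bigl((\la|\mathcal I_i^R|)^{-\alpha}\bigr)$ (Lemma \ref{lm:orderfail}), controls $\E|\mathcal I_i^R|\lesssim \la n$ (Lemma \ref{lm:root:inf}) and converts this into a failure probability $\gtrsim(\la^2n)^{-\alpha}$ via Jensen, and shows the length of a \emph{successful} round is $O(1)$ rather than $O(\log n)$, confining the $\log n$ residual to the single failed round (Lemma \ref{lm:controlResidualY}); this is precisely what removes the extra $\log n$ and restores the $\la$-dependence, giving $C_1\max\{(\la^2n)^\alpha,\log n\}$ instead of the $O(n^\alpha\log n)$ your coarser estimates (and Friedrich et al.'s) produce.
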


	\vspace{5mm}
	In this paper, we resolve this conjecture and determine the dependence on $\lambda$ in the most interesting regime, namely when $\la^2n \gg 1$. In the other regime, we show that the survival time is at most $\log n$.
	\begin{theorem}\label{thm:mainthm}
		Let $X = (X_t)_{t \ge 0}$ be the SIRS process on the star graph with $n$ leaves, with infection rate $\la$ and deimmunization rate $\alpha$, where only the root is infected at the beginning. Let $\tau_X$ be the survival time of the process $X$. For every $\alpha > 0$, there exist constants $C_0, C_1, C_2, N > 0$ only dependent on $\alpha$ such that the followings holds for all $n \ge N$ and $\la \ge C_0/\sqrt{n}$,
		\begin{equation}
			C_1 \left( \dfrac{(\la^2 n)^{\alpha}}{(\la + 1)^{2\alpha}}  + \log n \right) \le \E \tau_X \le C_2\left(\dfrac{(\la^2 n)^{\alpha}}{(\la + 1)^{2\alpha}} +  \log n \right).\label{eq:thm:1}
		\end{equation}		
		Moreover, if $\la < C_0/\sqrt{n}$, then we have the following bounds
		\begin{equation}
			C_1 \max\{\log (\la n), 1\} \le \E\tau_X \le C_2 \log n.\label{eq:thm:2}
		\end{equation}

	\end{theorem}
	
	\vspace{5mm}
	\begin{remark}
		When $ C_0/\sqrt{n} \le \la < 1/2$, since $\la + 1 = \Theta(1)$, the bounds \eqref{eq:thm:1} become
		$$ \E\tau_X \approx_{\alpha} (\la^2 n)^{\alpha} + \log n. $$
		When $\la \ge 1/2$, we have $\frac{\la}{\la + 1} = \Theta(1)$. The $\log n$ term is absorbed, and the bounds \eqref{eq:thm:1} become
		$$\E \tau_X \approx_{\alpha} n^{\alpha}. $$
	\end{remark}

	Moreover, we show that the same holds for a modified process $Y = (Y_t)$ where the leaves do not receive immunity (the root still receives immunity) (see also \cite{ferreira2016collective}). In other words, the leaves get infected from the root with infection rate $\lambda$ and recover with rate $1$. After recovery, they immediately become susceptible.  
	
	\begin{theorem}\label{thm:mainthmY}
		Let $Y = (Y_t)_{t \ge 0}$ be the modified SIRS process on the star graph with $n$ leaves, with infection rate $\la$ and deimmunization rate $\alpha$, where the leaves do not receive immunity, and only the root is infected at the beginning. Let $\tau_Y$ be the survival time of the process $Y$. For every $\alpha > 0$, all the conclusions about the survival time $\tau_X$ in Theorem \ref{thm:mainthm} hold for $\tau_Y$ (possibly with different constants, but the dependencies of the constants stay the same).
	\end{theorem}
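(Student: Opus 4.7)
The plan is to adapt the proof of Theorem \ref{thm:mainthm} to the modified process $Y$. The two processes differ only in the leaves' post-recovery behaviour: in $X$ a recovered leaf sits in an immune R-state for an $\Exp(\alpha)$ period before becoming susceptible, while in $Y$ it becomes susceptible immediately. The root's S-I-R dynamics, however, are identical in both processes, and the survival time in both cases is bottlenecked by the root's cycles, so the same leading-order bound $(\la^2 n)^{\alpha}$ should hold.

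The main computation reuses the cycle analysis. Decompose time into cycles, each beginning when the root exits its R state. Let $M_k'$ denote the number of infected leaves when the root enters the R state in cycle $k$ and $T_R^{(k)} \sim \Exp(\alpha)$ the length of that R period (during which infected leaves recover at rate $1$ with no fresh infections, since the root is not I). At the end of the R period the number of infected leaves is $M_k \approx M_k' e^{-T_R^{(k)}}$. During the ensuing S state, the race between root re-infection (rate $\la M_k$) and the $M_k$ remaining leaves recovering (rate $M_k$) causes the cycle to end in extinction with conditional probability $(1+\la)^{-M_k}$. Given the typical value $M_k' = \Theta(\la n)$, averaging over $T_R^{(k)}$ yields
\begin{equation*}
\E\bigl[(1+\la)^{-M_k}\bigr] \;\asymp\; \int_0^\infty \alpha e^{-\alpha t}\,(1+\la)^{-C\la n\, e^{-t}}\,dt \;\asymp\; (\la^2 n)^{-\alpha},
\end{equation*}
as can be seen by the substitution $s = C \la^2 n\, e^{-t}$, which turns the integrand into a rescaled Gamma density. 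Since cycles last $O(1)$ in expectation, both the upper and lower bounds follow: the per-cycle extinction probability has order $(\la^2 n)^{-\alpha}$, giving $\E\tau_Y = \Theta((\la^2 n)^{\alpha})$. The degenerate regime $\la^2 n < 1$ is handled separately: the upper bound is $O(\log n)$ (the existing bound from Theorem \ref{thm:friedrichStar} already suffices up to a constant), while the lower bound is the trivial $\E\tau_Y \ge 1$.

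What needs verifying for $Y$, and constitutes the main obstacle, is the estimate $M_k' = \Theta(\la n)$ uniformly in $\la$. In $Y$ the leaves follow the simpler two-state birth-death chain with birth rate $\la(n - I_l)$ and death rate $I_l$, with equilibrium $\la n / (1+\la)$; a standard second-moment or concentration argument should give $M_k' = \Theta(\la n)$ with constant probability after an $O(1)$ time inside the root's I state. The delicate point is patching the regimes $\la \ll 1$ (where the birth chain linearises), $\la \asymp 1$, and $\la \gg 1$ (where the chain saturates near $n$) so that the final constants depend only on $\alpha$, in the same form as in Theorem \ref{thm:mainthm}. An alternative and potentially cleaner route for the lower bound is the graphical coupling of $X$ and $Y$ with shared Poisson clocks: intuitively $Y$ dominates $X$ in the number of infected leaves at any moment, since $Y$'s leaves are immediately re-infectable. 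However, this coupling must be handled carefully, as the two roots' trajectories can desynchronize once $I_l^X$ and $I_l^Y$ differ; extracting a clean $\E\tau_Y \ge \E\tau_X - O(1)$ from it would require additional bookkeeping that may be avoided by the direct cycle computation above.
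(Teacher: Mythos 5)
Your plan is essentially the heuristic that the paper itself records in its proof sketch, and it leaves open exactly the two points where that heuristic breaks; the more serious one makes the argument fail for the lower bound when $\alpha\ge 1$. You assert that the per-cycle extinction probability is of order $(\la^2 n)^{-\alpha}$, and deduce the lower bound $\E\tau_Y\gtrsim(\la^2 n)^{\alpha}$ from it. This is false as a two-sided statement once $\alpha>1$: if the root's infectious period $\xi$ happens to be shorter than $(\la^2 n)^{-\alpha'}$ with $1<\alpha'<\alpha$, the round already fails with probability of order $(\la^2 n)^{-\alpha'}\gg(\la^2 n)^{-\alpha}$ (the paper computes precisely this in Section 2). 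Saying that $M_k'=\Theta(\la n)$ ``with constant probability'' cannot repair this, because for the lower bound you must average the failure probability $(1+\la)^{-M_k}$ over the full law of $M_k'$ (not just over $T_R^{(k)}$), and that average is dominated by the atypically short I-periods; this is the direction in which Jensen's inequality is unavailable. The paper only salvages a round-by-round argument for $\alpha<1$, by proving $\E p^{-\alpha}<\infty$ for the random per-leaf infection probability $p$ (see \eqref{eq:p:alpha}); for $\alpha\ge1$ it abandons per-round estimates and instead shows that a whole block of consecutive $\varepsilon$-bad rounds following a good starting point survives with probability $1-O((\la^2 n)^{-\alpha})$, via the sustained process and Lemmas \ref{lm:SurvivalProbOfMConsecRound} and \ref{lm:mainLemmaAlphaGe1}. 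Your proposal offers no substitute for this step, and it is the bulk of the work for $Y$ just as for $X$.

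Two further points. For the upper bound, ``typical value $M_k'=\Theta(\la n)$'' is not enough: the infected leaves at the start of a round are partly carried over from earlier rounds, so one needs the recursion giving $\E\bigl(|\mathcal I_{Y,i}^R|\,\big\vert\,\tau_i<\infty\bigr)=O(\la n)$ (Lemma \ref{lm:root:inf}) and then Jensen's inequality for $x\mapsto x^{-\alpha}$ to turn this expectation into a lower bound on the failure probability. Also, the $\log n$ term cannot be imported from Theorem \ref{thm:friedrichStar}: that bound is $O(n^{\alpha}\log n)$, far larger than $\max\{(\la^2 n)^{\alpha},\log n\}$ in the small-$\la$ regime, and is stated for $X$ rather than $Y$; the paper instead bounds the residual length of the single failed round by $2\log n$ (Lemma \ref{lm:controlResidualY}). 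Finally, your hesitation about the $X$--$Y$ coupling is well placed, but the paper's resolution is not an inequality of the form $\E\tau_Y\ge\E\tau_X-O(1)$: it couples so that the number of successful rounds (and $|\mathcal I_i^R|$ at the root's recovery times) of $Y$ dominates that of $X$ (Lemma \ref{lm:couplingLemma}), and converts round counts back into expected survival time through Wald's identity (Lemma \ref{lm:GeneralizedWald}).
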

	Theorems \ref{thm:mainthm}, \ref{thm:mainthmY}, and \ref{thm:nguyensly2022star} together highlight the role of the root for SIRS process on star graphs. Intuitively, for the modified process $Y$, the leaves not receiving immunity should help increasing the chance of them being infected, which in turn should increase the chance of reinfecting the root. In a sense, this helps sustaining the infection of $Y$ on the graph for longer than that of $X$. These theorems together shows that even if none of the leaves receives immunity, the expected survival time essentially does not change. However, if the root does not receive immunity, the expected survival time increases drastically. In practice, this may have considerable implications for the role of high-degree vertices versus that of low-degree vertices in SIRS processes on more general graphs.

	
	The remainder of the paper will chiefly focus on $\la \in \left[C_0/\sqrt{n}, 1/2\right]$, and is organized as follows. In Section \ref{sec:prelims}, we include preliminaries, including a precise definition of the SIRS process, graphical representation, and some frequently used probabilistic tools. In Section \ref{sec:ProofUppBd}, we prove the upper bound for the expected survival time of the SIRS process in Theorem \ref{thm:mainthm} and Theorem \ref{thm:mainthmY}. In Section \ref{sec:ProofLowBdAlphaGe1}, we outline the proof for matching lower bounds stated in Theorem \ref{thm:mainthm} and Theorem \ref{thm:mainthmY} and provide useful prerequisites. Further analysis for the lower bounds are presented in Sections \ref{sec:ProofSubOptBound} and \ref{sec:shortRunAnalysis}. Along the way, we will explain how to adapt the strategy to deduce bounds for $\la > 1/2$ and $\la <  C_0/\sqrt{n}$.
	
	\subsection{High-level proof sketch}
	
	Heuristically, we can explain the survival time on an $n$-star graph as follows. Each time that the root starts getting reinfected (namely, beginning of a ``round") by its leaves, it stays infected for time $\xi\sim \Exp(1)$. Typically, $\xi$ is of order $\Theta(1)$ and so, the root typically infects $N=\Theta(\lambda n)$ leaves before it recovers and becomes immune. The probability that these leaves all recover before reinfecting the root can be written out explicitly (Lemma \ref{lm:orderfail})  and shown to be of order $\Theta(\lambda N)^{-\alpha}$. Thus, as $N=\Theta(\la n)$, this probability of failing to reinfect the root (namely the round fails) becomes $\Theta(\la^{2}n)^{-\alpha}$. Therefore, the expected number of consecutive successful rounds would be of order $\Theta(\la^{2}n)^{\alpha}$. Since the time between two consecutive reinfections of the roots would be typically of order 1, this translates to the order of the survival time.

	\vspace{5mm}
	Carrying out this strategy involves having a sharp control on the probability of the tail events. 
	For the upper bound, in \cite{friedrich2022analysis}, an upper bound was achieved with an extra $\log n$ term and no dependence on $\lambda$, namely it was shown that $\E \tau\le C n^{\alpha}\log n$. The $\log n$ term there appeared naturally as an upper bound for length of each round because $\log N$ is roughly the expected time that all $N$ infected leaves recover.  Here, we come up with an elegant argument to provide a tight upper bound which involves making use of the dominating process $(Y_t)$ and appropriately using Jensen's inequality.

	\vspace{5mm}
	Moving to the lower bound, which turned out to be much more challenging, the first obstacle would be the unavailability of the Jensen's inequality in the reverse direction. The second and more important issue is that for each round, the failing probability can be significantly larger than $(\lambda ^2 n)^{-\alpha}$, such as when $\xi$ is too small, which leads to the number of infected vertices, $N$, being too small and failing probability being too large.  
	For instance, if $\alpha>1$, even in the first round, if we take any $1 < \alpha' < \alpha$, then the probability that the first round fails is at least the probability that the root only stays infected for time $\xi \le (\la^2 n)^{-\alpha'}$ and no leaves were infected during that time. This happens with probability at least
	\begin{align*}
		\int_0^{(\la^2 n)^{-\alpha' }} \P(\Exp(\la) > x)^n e^{-x}dx 
		&= \dfrac{1 - e^{(n \la + 1) (\la^2 n)^{-\alpha' }}}{n\la + 1} \approx (\la^2 n)^{-\alpha' },
	\end{align*} 
	much larger than the ``typical" $(\la^2 n)^{-\alpha}$.

	\vspace{5mm}
	In other words, the probability of surviving a round can be driven by rare but unfavorable scenarios. These bad rounds arise when the root is infected for only a very short time, or even worse, when such rounds occur consecutively, rapidly depleting the population of infected leaves. The key idea is that although bad rounds may follow one another, with high probability one can find a good starting point at which many vertices are infected. These infected vertices play a crucial role in sustaining the process over a chunk of consecutive bad rounds. Therefore, rather than focusing on individual bad rounds, we study such chunks.  With this approach, we first obtain a preliminary lower bound of order $(\lambda^{2}n)^{\alpha-o(1)}$, as established in Lemma~\ref{lm:SurvivalProbOfMConsecRound}. Building on this suboptimal estimate, we then carry out a more refined analysis to achieve the sharp bound.
		%
	
	\section{Preliminaries}\label{sec:prelims}

\subsection{Notations}
Throughout the paper, we will use the following notations. 
\begin{itemize}
	\item For positive $f, g$, we write $f(n) \lesssim_{l_1, \dots, l_k} g(n)$ (resp. $f(n) \gtrsim_{l_1, \dots, l_k} g(n)$) if there exists a constant $C > 0$ that depends on parameters $l_1, \dots, l_k$ such that for all $n \ge 1$, $f(n) \le Cg(n)$ (resp. $f(n) \ge Cg(n)$). Equivalently, we can use the notation $f(n) = O_{l_1, \dots, l_k}(g(n))$ (resp. $f(n) = \Omega_{l_1, \dots, l_k}(g(n))$). Here, parameters can also be the choice of function $l(\cdot)$.
	\item We either write $f(n) \approx g(n)$ or $f(n) = \Theta(g(n))$ if $f(n) \lesssim g(n)$ and $f(n) \gtrsim g(n)$.
	\item We write $C = C(l_1, l_2, \dots, l_k)$ if the constant $C$ is dependent on parameters $l_1, \dots, l_k$.
	\item PPP means "Poisson point process". We denote $\mathfrak{R} = (\mathfrak{R}_t)_{t \ge 0} \sim \PPP(\beta)$ if $\mathfrak{R}$ (Fraktur R) is a Poisson point process with rate $\beta > 0$. We will mostly use Fraktur font for PPPs.
	\item For the star graph with $n$ leaves, let $\rho$ denote the root and $v_1, \dots, v_n$ the $n$ leaves.
	\item For any $a \in \R$, $\lceil a \rceil$ denotes the minimum integer that is at least $a$, whereas $\lfloor a \rfloor$ denotes the maximum integer not exceeding $a$.
	\item For a random variable $Z$, a $\sigma$-algebra $\mathcal G$, and an event $G \in \mathcal G$, we use the notation 
	$$ \E\left(Z \ \bigg\vert\ \mathcal G, G\right) := 1_G \E\left(Z \ \bigg\vert\ \mathcal G\right). $$
	We then have a (formal) way of applying Law of Total Expectation as follows.
	$$ \E\left( \E\left(Z \ \bigg\vert\ \mathcal G, G\right)  \ \bigg\vert\ G \right) = \E\left(1_G \E\left(Z \ \bigg\vert\ \mathcal G\right)\ \bigg\vert\ G\ \right) = \E\left(Z \ \bigg\vert \ G \right),$$
	where for the last equality, we used the fact that they both are equal to $\frac{\E (Z\textbf{1}_{G})}{\P(G)}$.
	
	A word of caution - we will \textit{not} use this notation if $G \not\in \mathcal G$.
	\item More notations concerning the definition of the SIRS will be introduced in the next section.
\end{itemize} 

\subsection{SIRS process}
Let $G = (V, E)$ be an undirected graph with vertex set $V$ and edge set $E$. We equip the following Poisson point processes on the graph as follows.
\begin{itemize}
	\item At each vertex $v \in V$, we put 2 PPPs $\mathfrak{Q}_v \sim \PPP(1)$ and $\mathfrak{D}_v \sim \PPP(\alpha)$. \footnote{$\mathfrak{Q}$ is Fraktur Q, and  $\mathfrak{Q}$ is Fraktur D.}
	\item At each edge $(u, v) \in E$, we put 2 PPPs $\mathfrak{H}_{uv}, \mathfrak{H}_{vu} \sim \PPP(\la)$. \footnote{$\mathfrak{H}$ is Fraktur H.}
	\item All the PPPs $\{\mathfrak{Q}_v\}_{v \in V}$, $\{\mathfrak{D}_v\}_{v \in V}$, $\{\mathfrak{H}_{uv}, \mathfrak{H}_{vu}\}_{(u, v) \in E}$ are mutually independent.
\end{itemize}
We sometimes refer to these processes as \textit{clocks}. We let all of the clocks above evolve simultaneously and independently, starting at time $0$. Almost surely, there is no time point at which the event time of two clocks happen at once. Also almost surely, there is a countably infinite number of event times of the collection of all clocks, which we index by the (random) increasing sequence $\{\gamma_i\}_{i \in \N}$.

An SIRS process $X = (X_t)_{t \ge 0}$ includes (i) an underlying graph $G = (V, E)$, (ii) an infection rate $\la$ and a deimmunization rate $\alpha$ (the recovery rate is normalized to $1$), and (iii) an initial partition $(S_0, I_0, R_0)$ of $V$ into susceptible, infected, and recovered vertices. At every time $t \ge 0$, $X_t$ is a partition of $V$ into  $(S_t, I_t, R_t)$. The configuration only changes at times in $\{\gamma_i\}_{i \in \N}$ as follows. The configuration transition at time $\gamma_i$ happens as follows.
\begin{itemize}
	\item If for some $v \in V$, $\gamma_i \in \mathfrak{Q}_v$ and $v \in I_{\gamma_{i-1}}$, then $S_{\gamma_i} = S_{\gamma_{i-1}}$, $I_{\gamma_i} = I_{\gamma_{i-1}} \setminus \{v\}$, and $R_{\gamma_i} = R_{\gamma_{i-1}} \cup \{v\}$. We say that $v$ \textit{recovers} at time $\gamma_i$.
	\item If for some $v \in V$, $\gamma_i \in \mathfrak{D}_v$ and $v \in R_{\gamma_{i-1}}$, then $S_{\gamma_i} = S_{\gamma_{i-1}} \cup \{v\}$, $I_{\gamma_i} = I_{\gamma_{i-1}}$, and $R_{\gamma_i} = R_{\gamma_{i-1}} \setminus \{v\}$. We say that $v$ \textit{becomes susceptible} at time $\gamma_i$.
	\item If for some $(u, v) \in E$, $\gamma_i \in \mathfrak{H}_{uv}$, $u \in I_{\gamma_{i-1}}$ and $v \in S_{\gamma_{i-1}}$, then $S_{\gamma_i} = S_{\gamma_{i-1}} \setminus \{v\}$, $I_{\gamma_i} = I_{\gamma_{i-1}} \cup \{v\}$, and $R_{\gamma_i} = R_{\gamma_{i-1}}$. We say that $u$ \textit{infects} $v$ at time $\gamma_i$.
\end{itemize}
If none of the above occurs, then the configuration at $\gamma_i$ of $X$ is the same as that at $\gamma_{i-1}$. For any vertex $v \in V$ and $t \ge 0$, we will write $S_t(v) = 1$ if $v \in S_t$, and $S_t(v) = 0$ otherwise. We define $I_t(v), R_t(v)$ similarly. In other words,
$$ S_t(v) := 1_{\{v \in S_t\}}, \qquad I_t(v) := 1_{\{v \in I_t\}}, \qquad R_t(v) := 1_{\{v \in R_t\}}.$$


\subsection{Rounds of a process} Next, we define the notion of {\it rounds}.
\begin{definition}
	We say that a round of the process $(X_t)$ starts at time $s$ and ends at time $t$ if the root gets infected at time $s$, recovers, becomes susceptible again, and then gets reinfected at time $t>s$. We say that a round \textit{fails} if it does not have an end time, which corresponds to all the leaves recovering before reinfecting the root. When a round fails, the process $(X_t)$ ends. If a round does not fail, we say that it \textit{succeeds}.
\end{definition}

With the notion of rounds, we establish the following notations.
\begin{itemize}
	\item We denote $\tau_i$ the time that round $i$ starts. Thus, $\tau_1 = 0$, and it is possible for $\tau_i = \infty$ for $i \ge 2$. So "round $i$ succeeds" means that $\tau_{i+1} < \infty$.
	\item Moreover, let $\tau_i^R$ and $\tau_i^S$ be the times that the root recovers and becomes susceptible again in round $i$. Thus, we have the following sequence of (random, possibly infinite) times $$0 = \tau_1 < \tau_1^R < \tau_1^S < \tau_2 < \tau_2^R < \tau_2^S < \dots,  $$
	\item Let $(\mathcal F_t)$ be the canonical filtration generated by the process in consideration. 
	\item Let $\{\xi_i\}_{i \ge 1}$ be i.i.d. $\Exp(1)$ random variables denoting the duration of recovery for the root in each round. Thus, if $\tau_i < \infty$, then $\tau_i^R - \tau_i = \xi_i$; otherwise, we can just consider $\xi_i$ an $\Exp(1)$ random variable independent of $\mathcal F_{\infty}$ as it will not affect the process.  
	\item Likewise, let $\{\zeta_i\}_{i \ge 1}$ be i.i.d. $\Exp(\alpha)$ random variables denoting the duration of deimmunization for the root in each round. If $\tau_i < \infty$, then $\tau_i^S - \tau_i^R = \zeta_i$; otherwise, we can consider $\zeta_i$ an $\Exp(\alpha)$ random variable independent of $\mathcal F_{\infty}$.
	\item Let $\mathcal S_i$, $\mathcal R_i$, $\mathcal I_i$ be the set of susceptible, immune, and infected leaves at $\tau_i$, the beginning of round $i$ respectively. We define $\mathcal S_i^R, \mathcal R_i^R, \mathcal I_i^R$ and $\mathcal S_i^S, \mathcal R_i^S, \mathcal I_i^S$ similarly for times $\tau_i^S$ and $\tau_i^R$ respectively. 
	\item Let $\Psi$ be the number of successful rounds. In other words,
	$$\Psi := \sup\{k \in \mathbb{Z}_{> 0}: \tau_k < \infty\} -1.$$
	\item In the case where we work with different processes $(X_t)$ and $(Y_t)$ at the same time, we will add subscripts $X$ and $Y$ to all the notations above. For example, we write $\tau_{X, i}, \tau_{X, i}^R, \tau_{X, i}^S$ instead of $\tau_i, \tau_i^R, \tau_i^S$, or $\mathcal S_{X, i}, \mathcal R_{X, i}, \mathcal I_{X, i}$ instead of $\mathcal S_i, \mathcal R_i, \mathcal I_i$, and so on. When it is clear which process we are working with, or when the result applies to both processes, we will omit the subscripts.
	
\end{itemize}

\subsection{Graphical representation}\label{subsec:graphical}
A standard construction of the SIS processes, which can also be applied for the SIRS process, uses the \textit{graphical representation}. We briefly discuss this notion following \cite[Chapter 3, Section 6]{liggett:ips}. The idea is to record the infections and recoveries of the contact process on a graph $G$ on the space-time domain $G \times \mathbb{R}_+$. Suppose we have all the clocks $\{\mathfrak{Q}_v\}_{v \in V}$, $\{\mathfrak{D}_v\}_{v \in V}$, and $\{\mathfrak{H}_{uv}, \mathfrak{H}_{vu}\}_{(u, v) \in E}$ as in the previous subsection. Again, almost surely all of the event times of the Poisson processes are distinct. Then the graphical representation is defined as follows:
\begin{enumerate}
	\item Initially, we have the \textit{empty} domain $V \times \mathbb{R}_+$.
	
	\item For each $v\in V$, mark $\times$ at the point $(v,t)$, at each event time $t$ of $\mathfrak{Q}_v(\cdot)$, and mark $\otimes$ at each event time of $\mathfrak{D}_v(\cdot)$.
	
	\item For each $\vec{uv}\in \overrightarrow{E}$, add an arrow from $(u,t)$ to $(v,t)$, at each event time $t$ of $\mathfrak{H}_{uv}(\cdot)$.
\end{enumerate}

\begin{figure}[H]
	\centering
	\begin{tikzpicture}[thick,scale=1.1, every node/.style={transform shape}]
		\foreach \x in {0,...,4}{
			\draw[black] (\x,0) -- (\x,3.8);
		}
		\draw[black] (0,0)--(4,0);
		\draw[dashed] (0,3.8)--(4,3.8);
		\draw (0,.8) node[cross=2.2pt,black]{};
		\draw (0,3.2) node[cross=2.2pt,black]{};
		\draw (1,2.4) node[cross=2.2pt,black]{};
		\draw (2,.4) node[cross=2.2pt,black]{};
		\draw (3,1.36) node[cross=2.2pt,black]{};
		\draw (4,.88) node[cross=2.2pt,black]{};
		\draw (4,2.8) node[cross=2.2pt,black]{};
		
		\draw[->,line width=.6mm,black] (0,2.96)--(.57,2.96);
		\draw[black] (0,2.96)--(1,2.96);
		\draw[->,line width=.6mm,black] (1,1.44)--(.43,1.44);
		\draw[black] (1,1.44)--(0,1.44);
		\draw[->,line width=.6mm,black] (1,2.08)--(1.57,2.08);
		\draw[black] (1,2.08)--(2,2.08);
		\draw[->,black] (2,1.2)--(1.43,1.2);
		\draw[black] (1,1.2)--(2,1.2);		
		\draw[->,black] (2,1.84)--(2.57,1.84);
		\draw[black] (2,1.84)--(3,1.84);
		\draw[->,black] (3,2.72)--(2.43,2.72);
		\draw[black] (2,2.72)--(3,2.72);		
		\draw[->,line width=.6mm,black] (3,.56)--(3.57,.56);
		\draw[black] (3,.56)--(4,.56);
		\draw[->,black] (3,3.28)--(3.57,3.28);
		\draw[black] (3,3.28)--(4,3.28);
		\draw[->,black] (4,2.16)--(3.43,2.16);
		\draw[black] (3,2.16)--(4,2.16);			
		\node at (4.7,0.1) {$t=0$};
		\node at (4.7,3.8) {$t=s$};
		\node at (0,-.3) {$1$};
		\node at (1,-.3) {$2$};
		\node at (2,-.3) {$3$};
		\node at (3,-.3) {$4$};
		\node at (4,-.3) {$5$};
		\draw [line width=.07cm, blue] (0,0) -- (0,.8);
		\draw [line width=.07cm, blue] (1,0) -- (1,2.4);	
		\draw [line width=.07cm, blue] (1,1.44)--(0,1.44) -- (0,3.2);
		\draw [line width=.07cm, blue] (0,2.96) -- (1,2.96)--(1,3.8);
		\draw [line width=.07cm, blue] (1,2.08) -- (2,2.08)--(2,3.8);	
		\draw [line width=.07cm, blue] (2,0) -- (2,.4);
		\draw [line width=.07cm, blue] (3,1.36) -- (3,0);
		\draw [line width=.07cm, blue] (4,0) -- (4,.88);
		\draw [line width=.07cm, blue] (3,.56) -- (4,.56);			
		
	\end{tikzpicture}
	\caption{A realization of the SIS process on the interval $V=\{1,\ldots,5\}$, $E = \{(i, i+1)\}_{1\le i \le 4}$, with initial condition $\tilde{I}_0 = V, \tilde{S}_0 = \varnothing$. The blue lines describe the spread of infection.} \label{fig:SISfig}
\end{figure}
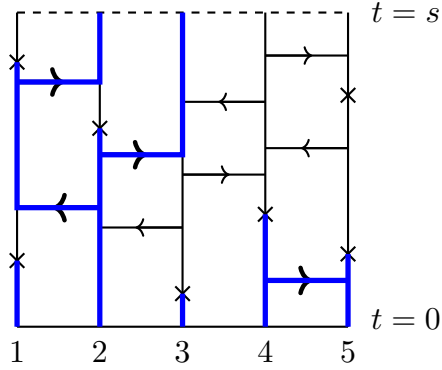

\noindent This gives a geometric picture of the SIS/SIRS process, and further provides a coupling of the processes over all possible initial states. Figure \ref{fig:SISfig} tells us how to interpret the infections at time $t$ based on this graphical representation.

\subsection{Probabilistic tools}
Throughout the paper, we will make repeated use of the following lemmas. The first lemma asserts that the maximum wait time of $n$ clocks of rate $\lambda$ has expectation roughly $\log n/\la$.
\begin{lemma}\label{lm:MaxOfExpIsHarmo} \cite[Lemma 2.10]{mupfal05}
	Let $\xi_1, \xi_2, \dots, \xi_n$ be i.i.d. exponentially distributed with rate $\la > 0$. Let $H_n$ be the $n$-th harmonic number, then 
	$$ \E\left( \max_{1 \le i \le n} \xi_i \right) = \dfrac{H_n}{\la} < \dfrac{1 + \log n}{\la}.$$
\end{lemma}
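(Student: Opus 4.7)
The plan is to reduce the maximum of $n$ i.i.d.\ exponentials to a sum of independent exponentials with decreasing rates, via the classical representation of exponential order statistics. Let $\xi_{(1)}\le\cdots\le\xi_{(n)}$ denote the order statistics and set $\xi_{(0)}:=0$. I will first establish that the gaps $Y_k:=\xi_{(k)}-\xi_{(k-1)}$ for $k=1,\dots,n$ are mutually independent with $Y_k\sim\Exp((n-k+1)\la)$. The engine is the memoryless property: $\xi_{(1)}$ is the minimum of $n$ independent $\Exp(\la)$'s and hence is $\Exp(n\la)$; conditional on $\xi_{(1)}$ and on which variable achieved it, the remaining $n-1$ variables, shifted by $\xi_{(1)}$, are again i.i.d.\ $\Exp(\la)$, so the next gap is $\Exp((n-1)\la)$. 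Iterating yields the full representation.

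With this in hand, I would write
\[
\max_{1\le i\le n}\xi_i \;=\; \xi_{(n)} \;=\; \sum_{k=1}^{n} Y_k
\]
and apply linearity of expectation to obtain
\[
\E\Bigl[\max_{1\le i\le n}\xi_i\Bigr] \;=\; \sum_{k=1}^{n}\frac{1}{(n-k+1)\la} \;=\; \frac{1}{\la}\sum_{j=1}^{n}\frac{1}{j} \;=\; \frac{H_n}{\la}.
\]

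For the final inequality $H_n < 1 + \log n$, I will use the standard integral comparison: since $1/x$ is decreasing on $[1,\infty)$, $\sum_{j=2}^{n} 1/j \le \int_{1}^{n} dx/x = \log n$, and adding the $j=1$ term gives the bound. There is no real obstacle in this argument; the only step requiring mild care is the joint distribution of the gaps $Y_k$, which is immediate from an induction using the memoryless property. As a sanity check, one can also derive the expectation directly from $\P(\max_i \xi_i \le t) = (1-e^{-\la t})^n$ via $\E[\max_i\xi_i] = \int_0^\infty \bigl(1 - (1-e^{-\la t})^n\bigr)\,dt$ and the substitution $u = 1 - e^{-\la t}$, which collapses to $\tfrac{1}{\la}\int_0^1 (1 + u + \cdots + u^{n-1})\,du = H_n/\la$.
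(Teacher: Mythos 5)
Your proof is correct; the paper gives no proof of this lemma, quoting it from the cited reference, and your gap decomposition of the maximum into independent exponentials of rates $(n-k+1)\la$ (plus the CDF-integral sanity check) is exactly the standard argument behind that reference's statement. One microscopic caveat: the strict inequality $H_n < 1+\log n$ holds for $n\ge 2$ (at $n=1$ it is an equality), and your integral comparison as written only gives $\le$ unless you note that $1/x$ is \emph{strictly} decreasing, but this is immaterial to how the lemma is used.
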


We also make use of the following standard lemma, which we state without proof.	
\begin{lemma}\label{lm:ExpLessThanExpIsExpAgain}
	Let $\xi, \eta$ be independent and exponentially distributed with rate $a, b > 0$ respectively. Then conditioned on $\{\xi < \eta\}$, $\xi$ is exponentially distributed with rate $(a + b)$. In other words,
	$$ \xi \bigg\vert_{\xi < \eta} \sim \Exp(a + b). $$
\end{lemma}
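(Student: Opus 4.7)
The plan is to compute the conditional survival function $\P(\xi > t \mid \xi < \eta)$ directly from its definition as a ratio, and to recognize the resulting expression as the survival function of an $\Exp(a+b)$ variable. Concretely, I would write
\begin{equation*}
\P(\xi > t \mid \xi < \eta) \;=\; \frac{\P(\xi > t,\, \xi < \eta)}{\P(\xi < \eta)},
\end{equation*}
and evaluate both quantities by integrating the joint density $a e^{-ax} \cdot b e^{-by}$ over the appropriate region in $\{(x,y) : 0 \le x < y\}$, using independence to factor the integrand. For the denominator, conditioning on $\xi = x$ and integrating out $\eta$ yields
\begin{equation*}
\P(\xi < \eta) \;=\; \int_0^\infty a e^{-ax}\, e^{-bx}\, dx \;=\; \frac{a}{a+b},
\end{equation*}
and the same computation restricted to $x > t$ gives $\P(\xi > t,\,\xi<\eta) = \frac{a}{a+b} e^{-(a+b)t}$. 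Taking the ratio produces $e^{-(a+b)t}$, which is exactly the survival function of $\Exp(a+b)$.

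A cleaner conceptual route, which I would mention briefly, is to invoke two standard facts about independent exponentials: first, $\min(\xi,\eta) \sim \Exp(a+b)$, and second, the identity of the minimizer is independent of the value of the minimum, with $\P(\xi < \eta) = a/(a+b)$. On the event $\{\xi < \eta\}$ we have $\xi = \min(\xi,\eta)$, and by the independence just noted, conditioning on the identity of the minimizer does not alter the distribution of the minimum itself, so $\xi \mid \{\xi<\eta\} \sim \Exp(a+b)$.

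There is no substantive obstacle in this lemma — it is a one-line computation once the joint density is written out, and the two proof strategies above are essentially equivalent. The only care needed is to notice that, because the conditioning event has positive probability, the conditional density is obtained by an honest ratio rather than any limiting argument.
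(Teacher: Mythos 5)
Your proof is correct; the direct computation of the ratio $\P(\xi>t,\ \xi<\eta)/\P(\xi<\eta)=e^{-(a+b)t}$ is exactly the standard argument, and the alternative route via $\min(\xi,\eta)\sim\Exp(a+b)$ together with the independence of the minimum's value from the identity of the minimizer is an equally valid (and equivalent) justification. The paper itself states this lemma without proof, so there is nothing to compare against beyond confirming that your one-line calculation is sound, which it is.
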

The next lemma gives the order of high moments of Binomial random variables.
\begin{lemma}\label{lm:centralBinomBound}\cite[Theorem 4]{skorski2022binom}
	Let $S \sim \text{Bin}(n, p)$. Then for any $d \in \mathbb{Z}_{>1}$, we have $$ \E\left( (S - \E S)^{2d} \right)^{\frac{1}{2d}} = \Theta(1) \cdot \max_{2 \le k \le d} \left( k^{1 - \frac{k}{2d}} (n p(1-p))^{\frac{k}{2d}} \right).$$
\end{lemma}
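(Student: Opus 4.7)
My plan is to write $S - \E S = \sum_{i=1}^n Y_i$ with $Y_i := X_i - p$ i.i.d.\ centered Bernoulli, and establish the two-sided moment bound by combining a direct combinatorial expansion of $\E(S-\E S)^{2d}$ with a sharp moment inequality (Latała's two-sided estimate) for sums of i.i.d.\ centered random variables. The elementary observation about the summands is that $Y_1 \in \{1-p, -p\}$ gives the explicit identity $\E|Y_1|^s = p(1-p)\bigl[(1-p)^{s-1} + p^{s-1}\bigr]$, from which a short calculation using $\max(p,1-p) \in [1/2, 1]$ shows $\|Y_1\|_s \approx (p(1-p))^{1/s}$ for every $s \ge 2$, with absolute implicit constants.

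For the upper bound, I would expand
\[
\E(S-\E S)^{2d} = \sum_{(i_1,\dots,i_{2d}) \in [n]^{2d}} \E[Y_{i_1}\cdots Y_{i_{2d}}]
\]
and group by the set partition of $\{1,\dots,2d\}$ induced by equalities among indices. Since $\E Y_1 = 0$, only partitions whose every block has size at least $2$ contribute, so the number of blocks $k$ is at most $d$. The contribution of partitions with exactly $k$ blocks is bounded by $n^k \cdot S_{\ge 2}(2d,k) \cdot (p(1-p))^k$, using that each block-moment of order $\ge 2$ is bounded by $p(1-p)$. Combined with the standard Stirling-type estimate $S_{\ge 2}(2d,k) \lesssim k^{2d-k}$, summing over the $d$ values of $k$ and taking $(2d)$-th roots produces the claimed upper bound $\max_{2\le k \le d}k^{1-k/(2d)}(np(1-p))^{k/(2d)}$, all constants being absolute since $d^{1/(2d)}$ is bounded.

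For the matching lower bound, I would invoke Latała's two-sided moment inequality for sums of i.i.d.\ centered random variables, applied at moment order $2d \ge 2$:
\[
\Bigl\|\sum_{i=1}^n Y_i\Bigr\|_{2d} \approx \sup_{s \in [2,\,2d]} \frac{2d}{s}\, \Bigl(\frac{n}{2d}\Bigr)^{1/s}\, \|Y_1\|_s.
\]
Substituting $\|Y_1\|_s \approx (p(1-p))^{1/s}$ and reparameterizing $s = 2d/k$ with $k \in [1, d]$ converts the supremum into $\max_k k \cdot \bigl(np(1-p)/(2d)\bigr)^{k/(2d)}$; since the factor $(2d/k)^{k/(2d)}$ stays in $[1, e^{1/e}]$, this equals $\max_k k^{1-k/(2d)}(np(1-p))^{k/(2d)}$ up to absolute constants. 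The $k=1$ contribution is dominated by the $k=2$ contribution, so the effective range is $2 \le k \le d$, matching the statement.

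I expect the main obstacle to be producing a self-contained proof of the lower bound without appealing to Latała's inequality as a black box. The natural combinatorial route isolates, for each $2 \le k \le d$ attaining the maximum, a family of set partitions consisting of $k-1$ pairs and one block of size $2d-2(k-1)$; restricting to partitions with all even block sizes keeps every block-moment positive and sidesteps sign cancellations, after which the lower bound reduces to counting such partitions and matching the Stirling-type estimate $k^{2d-k}$ that governed the upper bound.
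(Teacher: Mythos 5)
The paper offers no proof of this lemma: it is imported wholesale from the cited theorem of Skorski, so there is no internal argument to compare yours against. Your route is the standard one for results of this type, and its core is sound: the identity $\E|Y_1|^s=p(1-p)\bigl[(1-p)^{s-1}+p^{s-1}\bigr]$ does give $\|Y_1\|_s\approx(p(1-p))^{1/s}$ and $|\E Y_1^m|\le p(1-p)$ for $m\ge 2$; the partition expansion with all blocks of size at least two, the count $\le\binom{2d}{k}k^{2d-k}$ (your ``$\lesssim k^{2d-k}$'' hides an exponential-in-$k$ factor, but it is absorbed after taking $2d$-th roots, as is the factor $d$ from summing over $k$), and the Latała reparameterization $s=2d/k$ with $(2d/k)^{k/(2d)}\in[1,e^{1/e}]$ all check out.

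Two genuine gaps remain. First, the single-block partition $k=1$ is part of your expansion and contributes $n\,\E Y_1^{2d}=\Theta(np(1-p))$, i.e.\ $(np(1-p))^{1/(2d)}$ after taking roots; your final bound silently discards it, and your claim that the $k=1$ term is dominated by the $k=2$ term is false once $np(1-p)$ is below a constant. Indeed, for $n=1$, $d$ fixed and $p\to 0$ the left side is $\Theta\bigl(p^{1/(2d)}\bigr)$ while $\max_{2\le k\le d}k^{1-k/(2d)}(p(1-p))^{k/(2d)}=\Theta\bigl(p^{1/d}\bigr)$, so the statement as transcribed (maximum starting at $k=2$, no hypothesis on $np(1-p)$, no cap $k\le n$, which also shows up in Latała's restriction $s\ge\max(2,2d/n)$) cannot be proved as it stands; you should either keep $k=1$ and cap $k\le\min(n,d)$ in the maximum, or record a hypothesis such as $np(1-p)\ge 1$ — harmless here, since in the paper's only application (Lemma \ref{lm:SurvivalProbOfMConsecRound}) the variance is $\gtrsim\lambda^{-1}(\lambda^2 n)^{\varepsilon_0^2/4}\gg 1$ and $d$ is a constant depending only on $\alpha,\varepsilon_0$. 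Second, your fallback self-contained lower bound — counting only partitions with all block sizes even — does not work as described: partitions containing odd blocks carry the sign of $\prod_j\E Y_1^{m_j}$, which can be negative (e.g.\ when $p>1/2$), and a signed sum is not bounded below by the contribution of a positive subfamily unless you control the remainder, say by symmetrization. If you invoke Latała's two-sided estimate (valid for i.i.d.\ mean-zero summands after standard symmetrization) the lower bound is fine, but then the proof is not self-contained, which is no worse than the paper's own treatment, since it simply cites the result.
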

The sum of $n$ independent exponential random variables with rate $\alpha$ is the Gamma distribution $\text{Gam}(n, \alpha)$. We will use the following tail bound for this random variable.
\begin{lemma}\label{lm:BoundTailGamma}
	If $\xi \sim \text{Gam}(n, \alpha)$, then $$\P\left( \xi \ge (1 + t) \dfrac{n}{\alpha}\right) \le \left( (1 + t) e^{-t} \right)^n. $$
\end{lemma}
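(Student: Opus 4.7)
This is a standard Chernoff-type tail bound for a Gamma distribution, so my plan is a textbook moment-generating-function argument optimized over the free parameter. I would write $\xi = \sum_{i=1}^n \xi_i$ with $\xi_i \sim \Exp(\alpha)$ i.i.d., so that for any $s \in (0, \alpha)$ the moment generating function factors as
\begin{equation*}
\E e^{s\xi} = \prod_{i=1}^n \E e^{s\xi_i} = \left( \frac{\alpha}{\alpha - s} \right)^n.
\end{equation*}
Then Markov's inequality applied to $e^{s\xi}$ gives, for every $s \in (0,\alpha)$,
\begin{equation*}
\P\!\left( \xi \ge (1+t)\tfrac{n}{\alpha} \right) \le e^{-s(1+t)n/\alpha}\left( \frac{\alpha}{\alpha - s} \right)^n = \exp\!\left( -s(1+t)\tfrac{n}{\alpha} - n\log\!\left(1 - \tfrac{s}{\alpha}\right) \right).
\end{equation*}

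Next I would optimize the exponent over $s$. Differentiating $-s(1+t)/\alpha - \log(1 - s/\alpha)$ in $s$ and setting it to zero yields $(1+t)/\alpha = 1/(\alpha - s)$, i.e.\ the optimizer $s^* = \alpha t/(1+t)$, which lies in $(0, \alpha)$ as required for the MGF to be finite. Plugging $s^*$ back in, the prefactor becomes $e^{-s^*(1+t)n/\alpha} = e^{-tn}$, and the MGF factor becomes $(\alpha/(\alpha - s^*))^n = (1+t)^n$, so that
\begin{equation*}
\P\!\left( \xi \ge (1+t)\tfrac{n}{\alpha} \right) \le \bigl((1+t)e^{-t}\bigr)^n,
\end{equation*}
which is exactly the claimed bound.

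There is no real obstacle here: the only nontrivial step is choosing the right $s$, and that is handled automatically by the one-variable optimization above. If one wanted to avoid calculus, one could alternatively quote Cram\'er's theorem for the sum of i.i.d.\ exponentials, whose Legendre transform gives precisely the rate function $t - \log(1+t)$, leading to the same bound; but the direct MGF computation is shorter and self-contained, so I would present it as above.
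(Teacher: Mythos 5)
Your proof is correct and follows essentially the same route as the paper: a Chernoff bound via the moment generating function of the sum of i.i.d.\ exponentials, with the same choice $s = \alpha t/(1+t)$ (the paper simply plugs in this value rather than deriving it by optimization). No gaps.
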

\begin{proof}
	This is a simple Chernoff bound. Let $\xi = Y_1 + \dots + Y_n$, where $Y_1, \dots, Y_n$ are i.i.d. $\sim \Exp(\alpha)$ random variables. Then for $s > 0$, we have
	\begin{align*}
		\P\left( \xi \ge (1 + t) \dfrac{n}{\alpha}\right) &= \P\left( \xi \ge (1 + t) \E(\xi)\right) \le e^{-(1+t)s\E(\xi)} \E\left(e^{s\xi}\right) = \left(e^{-(1+t) s \E(Y_1)} \E(e^{sY_1})\right)^n,
	\end{align*}
	and setting $s := \frac{\alpha t}{t + 1} > 0$ gives the desired result.
\end{proof}
Finally, we recall a generalized version of Wald's equation.
\begin{lemma}\label{lm:GeneralizedWald} \cite[Generalized Wald's equation, Theorem 5]{doerr2023}
	Let $c, c' \in \R$, and let $\Psi$ be a stopping time with respect to a filtration $(\mathcal F_t)_{t \in \Z_{\ge 0}}$. Furthermore, let $(X_i)_{i \in \Z_{\ge 0}}$ be a random process over $\R_{\ge c}$ such that $\sum_{i = 1}^{\Psi} X_i$ is integrable, and such that for all $i \in \Z_{\ge 0}$, it holds that $\E(X_{i+1} \mid \mathcal F_i) \le c'$. Then 
	$$ \E\left(\sum_{i=1}^{\Psi} X_i \ \bigg\vert \ \mathcal F_0 \right) = \E\left(\sum_{i=1}^{\Psi} \E(X_i \mid \mathcal F_{i-1}) \ \bigg\vert \ \mathcal F_0\right).$$
\end{lemma}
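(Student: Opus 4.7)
The plan is to reduce the random sum to an infinite series weighted by stopping-time indicators and then use the stopping-time property of $\Psi$ to move the conditional expectation across the indicator. I would begin by writing
$$ \sum_{i=1}^{\Psi} X_i \;=\; \sum_{i=1}^{\infty} X_i \mathbf{1}_{\{\Psi \ge i\}} $$
and observing that, since $\Psi$ is a stopping time with respect to $(\mathcal F_t)_{t \in \Z_{\ge 0}}$, the event $\{\Psi \ge i\} = \{\Psi \le i-1\}^c$ lies in $\mathcal F_{i-1}$. Thus $\mathbf{1}_{\{\Psi \ge i\}}$ is $\mathcal F_{i-1}$-measurable and can be pulled in or out of any conditional expectation with respect to $\mathcal F_{i-1}$.

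Using $\mathcal F_0 \subseteq \mathcal F_{i-1}$ and the tower property, for each fixed $i \ge 1$ I would then compute
$$ \E\bigl(X_i \mathbf{1}_{\{\Psi \ge i\}} \mid \mathcal F_0\bigr) \;=\; \E\bigl(\mathbf{1}_{\{\Psi \ge i\}}\, \E(X_i \mid \mathcal F_{i-1}) \mid \mathcal F_0\bigr). $$
Summing this identity over $i \ge 1$ and then interchanging the summation with the outer conditional expectation on each side produces the desired equality.

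The main obstacle is justifying that interchange, since the summands $X_i$ need not be non-negative when $c < 0$. I would handle this by the shift $Y_i := X_i - c \ge 0$, noting that $\E(Y_i \mid \mathcal F_{i-1}) \ge 0$ as well, and applying the conditional Tonelli theorem to $\sum_{i \ge 1} Y_i \mathbf{1}_{\{\Psi \ge i\}}$ and $\sum_{i \ge 1} \E(Y_i \mid \mathcal F_{i-1})\mathbf{1}_{\{\Psi \ge i\}}$, where no integrability is needed for non-negative summands. The residual $c\Psi$ contributions on the two sides are identical and therefore cancel once they are almost surely finite; this finiteness follows from the integrability of $\sum_{i=1}^{\Psi} X_i$ together with $X_i \ge c$, which forces $\Psi < \infty$ a.s.\ and $c\Psi$ integrable. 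The hypothesis $\E(X_{i+1} \mid \mathcal F_i) \le c'$ plays no role in the identity itself; in applications it is used to upper bound the right-hand side by $c'\,\E(\Psi \mid \mathcal F_0)$, which is the form in which the generalized Wald identity is typically invoked.
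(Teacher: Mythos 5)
A preliminary remark: the paper offers no proof of this lemma at all---it is imported verbatim from \cite{doerr2023}---so your argument can only be judged on its own terms, not against an in-paper argument. Your route (write $\sum_{i=1}^{\Psi}X_i=\sum_{i\ge 1}X_i 1_{\{\Psi\ge i\}}$, note $\{\Psi\ge i\}=\{\Psi\le i-1\}^c\in\mathcal F_{i-1}$, apply the tower property, and justify the interchange of sum and conditional expectation by conditional Tonelli after centering at $c$) is the standard one, and it is complete whenever $c\ge 0$; in particular it fully covers the only place the paper uses the lemma, where $X_i=\tau_i^S-\tau_i\ge 0$.

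The genuine gap is the final cancellation when $c<0$: integrability of $c\Psi$ (or even a.s. finiteness of $\E(\Psi\mid\mathcal F_0)$) does \emph{not} follow from integrability of $\sum_{i=1}^{\Psi}X_i$ together with $X_i\ge c$, since the $X_i$ may be small or cancel. Worse, no argument can close this gap for the statement as transcribed, because it is then false: take $X_i$ i.i.d.\ uniform on $\{-1,+1\}$, $\mathcal F_i=\sigma(X_1,\dots,X_i)$, and $\Psi$ the first time the partial sums hit $1$. Then $c=-1$, $c'=0$, $\Psi$ is an a.s.\ finite stopping time, $\sum_{i=1}^{\Psi}X_i=1$ is integrable, and $\E(X_{i+1}\mid\mathcal F_i)=0\le c'$, yet the left-hand side equals $1$ while the right-hand side equals $0$. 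The missing hypothesis, present in standard formulations of generalized Wald and exactly what your cancellation needs, is $\E(\Psi\mid\mathcal F_0)<\infty$ a.s.\ (e.g.\ $\E\Psi<\infty$). Under it your shift-and-cancel argument does go through, and---contrary to your closing remark---the bound $\E(X_{i+1}\mid\mathcal F_i)\le c'$ is then genuinely used: together with $\E(X_i\mid\mathcal F_{i-1})\ge c$ it gives $\bigl|\sum_{i=1}^{\Psi}\E(X_i\mid\mathcal F_{i-1})\bigr|\le \max(|c|,|c'|)\,\Psi$, so the right-hand side is conditionally integrable and subtracting $c\,\E(\Psi\mid\mathcal F_0)$ from both sides is legitimate.
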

		
	\section{Proof of the upper bounds in Theorems \ref{thm:mainthm} and \ref{thm:mainthmY}}\label{sec:ProofUppBd}
\subsection{General strategy}
Let $\tau$ be the survival time for the process in consideration. Our starting point (for both processes $X$ and $Y$) is the following.
\begin{equation}\label{eq:ExpOfTauOverall}
	\E \tau = \sum_{i \ge 1}  \E ((\tau_{i+1}-\tau_i)1_{\{\tau_{i+1} < \infty\} }) + \sum_{i \ge 1} \E((\tau - \tau_i)1_{\{\tau_i < \infty\}}1_{\{\tau_{i+1} = \infty\}}) ,
\end{equation}
where the first and second summation is the total length of all successful rounds, and the length of the failed rounds, respectively. For the first summation, we will prove that 
\begin{itemize}
	\item The probability of a round failing is at least of order $\Omega_{\alpha}\left((\la^2 n)^{-\alpha}\right)$. This implies that the probability of the process surviving $i$ rounds is at most $(1 - c'(\la^2 n)^{\alpha})^i$, where $c' > 0$ only depends on $\alpha$. 
	\item On the event that a round is successful, its expected length is at most $(2 + 1/\alpha)$, i.e. of constant order.
\end{itemize}
These two facts together that the total length of successful rounds is at most of order $O_{\alpha}\left((\la^2 n)^{\alpha}\right)$.

For the second summation, we will prove that it is of order $O(\log n)$. The intuition that there are at most $n$ infected leaves by the time the root becomes susceptible again, and on the event that the round fails, we can apply Lemma \ref{lm:MaxOfExpIsHarmo}. 

\subsection{A coupling of two processes}
We first couple the processes $(X_t)$ and $(Y_t)$. All the leaves in $(Y_t)$ does not gain immunity, so intuitively, they have a higher chance of being infected, which helps sustaining the infection of $Y$ on the graph for longer than that of $X$. That being said, it is not necessarily true that $\tau_Y$ stochastically dominates $\tau_X$ because the "residual" times $\tau_{i+1}^S -\tau_i$ would be shorter for $Y$ than $X$. However, in a sense, the survival of $Y$ still dominates $X$: the number of successful rounds of $Y$ stochastically dominates that of $X$, via the following lemma.

\begin{lemma}\label{lm:couplingLemma} There exists a coupling $((X'_t), (Y'_t))$ of the processes $(X_t)$ and $(Y_t)$ such that a.s., $$\Psi_{X'} \le \Psi_{Y'},$$ where $\Psi_{X'}$ and $\Psi_{Y'}$ denotes the numer of successful rounds of the processes $(X'_t)$ and $(Y'_t)$ respectively. Moreover, in this coupling, we ensure that the number of infected leaves of $Y'$ at recovery times of the root of each round is always at least that of $X'$. In other words, a.s., for all $1 \le i \le \Psi_{X'}$,
	$$ |\mathcal I_{X', i}^R| \le |\mathcal I_{Y', i}^R|.  $$
	As a result, 
	$$ \E\Psi_X \le \E\Psi_Y. $$
\end{lemma}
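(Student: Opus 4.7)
The plan is to construct the coupling inductively, round by round, using shared Poisson clocks within each round but refreshed independently at each round boundary (valid by the strong Markov property, so the marginals remain correct SIRS processes). Concretely, for each round $i$ reached by both processes, draw fresh shared exponentials $\xi_i\sim \Exp(1)$ for the root's infected duration and $\zeta_i\sim \Exp(\alpha)$ for the root's immune duration, together with fresh shared Poisson clocks $\mathfrak{H}^{(i)}_{\rho v_j}$, $\mathfrak{H}^{(i)}_{v_j \rho}$, $\mathfrak{Q}^{(i)}_{v_j}$ driving the leaf dynamics during round $i$; the process $X'$ additionally uses independent leaf deimmunization clocks $\mathfrak{D}^{(i)}_{v_j}$, while $Y'$ treats each leaf recovery as an immediate return to susceptibility. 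Refreshing the clocks at each round boundary aligns the phases $\tau_i^R, \tau_i^S$ across the two processes and sidesteps the real-time desynchronization that would otherwise occur because $Y'$, with more potential reinfectors, naturally restarts faster than $X'$.

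The induction hypothesis is that at the start of round $i$, the sets of infected leaves satisfy $\mathcal I_{X', i} \subseteq \mathcal I_{Y', i}$. The base case $i=1$ is trivial since both processes start with the root infected and all leaves susceptible. For the inductive step, I would prove $\mathcal I_{X', i}^R \subseteq \mathcal I_{Y', i}^R$ by a leaf-wise analysis under the shared clocks. For any leaf $v$ with $v\in\mathcal I_{X',i}^R$, the shared events force a $\mathfrak{H}^{(i)}_{\rho v}$ event at some time $s\le \xi_i$ (possibly preceded by an $X'$-specific deimmunization) such that there is no subsequent $\mathfrak{Q}^{(i)}_v$ event in $(s,\xi_i]$. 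In $Y'$, which has no immune state, the same shared $\mathfrak{H}$ event (or an even earlier one) infects $v$, and the absence of $\mathfrak{Q}$ events in $(s,\xi_i]$ keeps $v$ infected through $\xi_i$. During the subsequent interval $(\xi_i,\xi_i+\zeta_i]$ the root is recovered in both processes, no new leaf infections can occur, and the inclusion persists via shared recovery events.

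For round-success propagation, if some $v^*\in \mathcal I_{X',i}^R$ fires its $\mathfrak{H}^{(i)}_{v^*\rho}$ event after $\xi_i+\zeta_i$ while still infected, then $v^*\in \mathcal I_{Y',i}^R$ remains infected in $Y'$ at that moment by the shared recovery clocks, so round $i$ also succeeds in $Y'$; a similar argument shows $\mathcal I_{X',i+1}\subseteq \mathcal I_{Y',i+1}$ at the next round start. The main obstacle is the leaf-wise bookkeeping when $v$ occupies different states in $X'$ and $Y'$ (for instance, $v$ immune in $X'$ while susceptible or infected in $Y'$); the resolution is that being infected at the end of round $i$ in $X'$ is a strictly more restrictive condition than in $Y'$, since $X'$ requires a preceding deimmunization whereas $Y'$ does not. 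By induction, $|\mathcal I_{X', i}^R|\le |\mathcal I_{Y', i}^R|$ for every $1\le i\le \Psi_{X'}$ and $\Psi_{X'}\le \Psi_{Y'}$ almost surely; taking expectations yields $\E\Psi_X \le \E\Psi_Y$.
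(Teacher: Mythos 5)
Your proposal is correct and takes essentially the same route as the paper: independent fresh copies of shared clocks for each round (with $Y'$ simply ignoring the leaf deimmunization clocks), an inductive inclusion $\mathcal I_{X',i}\subseteq \mathcal I_{Y',i}$ of infected-leaf sets carried through $\tau_i^R$, $\tau_i^S$, and propagation of round success from $X'$ to $Y'$. One small point to patch when writing it out: a leaf in $\mathcal I_{X',i}^R$ need not have experienced any $\mathfrak{H}^{(i)}_{\rho v}$ event during the round (it may be carried over from $\mathcal I_{X',i}$ and simply never recover), but that case follows immediately from the induction hypothesis together with the absence of $\mathfrak{Q}^{(i)}_v$ events on $(0,\xi_i]$.
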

\begin{proof}
	Let $$\mathcal C = \{\mathfrak{Q}_{\rho}, \{\mathfrak{Q}_{v_i}\}_{i=1}^n, \mathfrak{D}_{\rho}, \{\mathfrak{D}_{v_i}\}_{i=1}^n, \{\mathfrak{H}_{\rho v_i}, \mathfrak{H}_{v_i\rho}\}_{i=1}^n\}$$
	be the set of clocks for the SIRS process on a star graph (so $\mathfrak{Q}_{\rho}, \mathfrak{Q}_{v_i} \sim \PPP(1)$, and so on). Let $\mathcal C_1, \mathcal C_2, \dots$ be independent copies of $\mathcal C$. We now define $(X'_t), (Y'_t)$ to be the coupled process of $(X_t), (Y_t)$. For each process,
	\begin{itemize}
		\item At round $i$, given that $\tau_{X', i} < \infty$ (resp. $\tau_{Y', i} < \infty$), we use the clocks in $\mathcal C_i$, where the starting time $t = 0$ of the clocks $\mathcal C_i$ is lined up with time $\tau_{X', i}$ in the process $X'$ (resp. $\tau_{Y', i}$ in the process $Y'$).
		\item For process $Y'$, we simply ignore the clocks $\mathfrak{D}_{v_i}$'s.
		\item We let the processes run until $\tau_{X', i + 1} < \infty$ (resp. $\tau_{Y', i + 1} < \infty$), at which point we replace the clocks in $\mathcal C_i$ by those in $\mathcal C_{i+1}$ and repeat the process as above.
	\end{itemize}
	Due to the memoryless property of PPP's, we see that the processes $X'$ and $X$ have the same distribution, and same goes for $Y'$ with $Y$. Thus, $(X', Y')$ as above is indeed a coupling for the processes $X$ and $Y$. By a standard induction argument on $i$, one can argue that if $\tau_{X, i} < \infty$, then 
	\begin{equation}\label{eq:behaOfCoupling}
		\begin{cases}
			\tau_{Y', i+1} - \tau_{Y', i} \le \tau_{X', i+1} - \tau_{X', i}, \\
			\tilde{I}_{X', \tau_{X', i} + t} \subseteq \tilde{I}_{Y', \tau_{Y', i} + t} \ \forall t \in [0, \tau_{Y', i+1} - \tau_{Y', i}].
		\end{cases}
	\end{equation}
	Thus, this is the desired coupling. 
\end{proof}

\begin{remark}\label{rem:XYDiff}
	We note that in all the following subsections, the results apply to both processes $(X_t)$ and $(Y_t)$. The proofs are identical for both processes, except for those in Section \ref{subsec:LowerInfectingPowerOfRootDiffXY}. Therefore, except for the aforementioned subsection, for the remaining subsections, we will omit the $X/Y$ subscripts, prove needed results for the process $(Y_t)$ and explain why it should also hold for the process $(X_t)$. For Section \ref{subsec:LowerInfectingPowerOfRootDiffXY}, we will prove the needed results for one process, and translate them to the other process suitably.
\end{remark}

\subsection{Conditional probability of failing to survive}\label{sec:containNotionOfRounds}

In the following lemmas, we study the conditional probability that a round fails. 
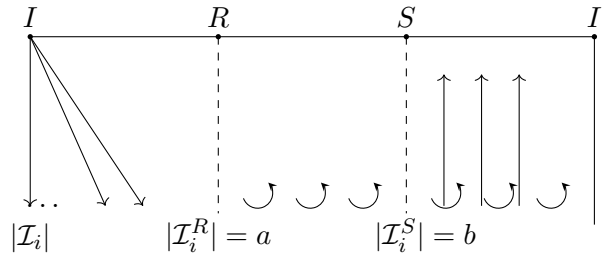
\begin{figure}[h!]
	\begin{center}
		\begin{tikzpicture}[scale=0.5]
			\coordinate (I) at (0,0);
			\coordinate (R) at (5,0);
			\coordinate (S) at (10,0);
			\coordinate (I_2) at (15,0);
			\coordinate (U) at (5, -6);
			\coordinate (U_{old}) at (0, -6);
			\coordinate (V) at (10.5, -6);
			\coordinate (dots) at (0.4, -4.8);
			
			\fill (I) circle[radius=2pt] node[above] {$I$};
			\fill (R) circle[radius=2pt] node[above] {$R$};
			\fill (S) circle[radius=2pt] node[above] {$S$};
			\fill (I_2) circle[radius=2pt] node[above] {$I$};
			\fill (U_{old}) node[above] {$|\mathcal I_i|$};
			\fill (U) node[above] {$|\mathcal I^R_i|=a$};
			\fill (V) node[above] {$|\mathcal I^S_i|=b$};
			\fill (dots) node[above] {$\dots$};
			
			\draw (0, 0) -- (I_2);
			\draw (15, 0) -- (15, -5);
			\draw[dashed] (5, 0) -- (5, -4.7);
			\draw[dashed] (10, 0) -- (10, -4.7);

			\draw[->] (0,0) -- (0, -4.5);
			
			\draw[->] (0,0) -- (3, -4.5);
			\draw[->] (0,0) -- (2, -4.5);

			\draw[->,>=stealth] (5.67,-4.3) arc (-160:45:0.4);
			\draw[->,>=stealth] (7.07,-4.3) arc (-160:45:0.4);
			\draw[->,>=stealth] (8.47,-4.3) arc (-160:45:0.4);
			
			\draw[->,>=stealth] (10.67,-4.3) arc (-160:45:0.4);
			\draw[->,>=stealth] (12.07,-4.3) arc (-160:45:0.4);
			\draw[->,>=stealth] (13.47,-4.3) arc (-160:45:0.4);
			
			\draw[->] (11,-4.5) -- (11, -1);
			
			\draw[->] (12,-4.5) -- (12, -1);
			\draw[->] (13,-4.5) -- (13, -1);
			
		\end{tikzpicture}
	\end{center}
	\caption{Picture of a round. On the first line is the states of the root. On the second line are the leaves. The arrows represent infection and the curved arrows represent recovery.}
\end{figure}

We next study the distribution of these random variables.
\begin{lemma}\label{lm:UV}
	For both processes $(X_t)$ and $(Y_t)$, the following holds: for any integer $b \ge 0$, we have
	\begin{align*}
		\P\left(|\mathcal I^S_i| = b\ \bigg\vert\ \mathcal F_{\tau_i^R}, |\mathcal I_i^R| = a \right) &= 1_{\{a  > 0 \}} \cdot \alpha\binom{a}{b}\cdot B( b+\alpha, a-b+1) + 1_{\{a = 0 \}} \cdot 1_{\{b = 0\}}\\
		&= 1_{\{a  > 0 \}} \cdot \alpha \dfrac{\Gamma(a + 1)}{\Gamma(\alpha + a + 1)} \cdot \dfrac{\Gamma(b + \alpha)}{\Gamma (b + 1)} + 1_{\{a  = 0 \}} \cdot 1_{\{b = 0\}},
	\end{align*}
	where $B(z_1, z_2)$ denotes the Beta function and $\Gamma$ denotes the Gamma function.
\end{lemma}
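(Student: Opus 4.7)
The plan is a direct conditional calculation from the graphical representation. Work on the event $\{|\mathcal I_i^R|>0\}$; the boundary case $|\mathcal I_i^R|=0$ is trivial since then the infection has already died out by $\tau_i^R$, forcing $|\mathcal I_i^S|=0$ almost surely and giving the second summand. Set $a:=|\mathcal I_i^R|$. On the interval $[\tau_i^R,\tau_i^S]$ the root sits in state $R$, so no $\mathfrak{H}_{\rho v_j}$-arrow can generate a leaf infection; hence the only events that can change the set of infected leaves on this window are the recovery clocks $\mathfrak{Q}_{v_j}$ for $v_j \in \mathcal I_i^R$. This mechanism is identical for $(X_t)$ and $(Y_t)$, which justifies a single treatment per Remark \ref{rem:XYDiff}.

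By the memoryless property of the $\PPP(1)$ recovery clocks, conditional on $\mathcal F_{\tau_i^R}$ the residual recovery time $W_j$ of each infected leaf $v_j\in \mathcal I_i^R$ is an independent $\Exp(1)$ random variable, and $\zeta_i:=\tau_i^S-\tau_i^R\sim \Exp(\alpha)$ is independent of them (it comes from the independent clock $\mathfrak{D}_\rho$). Since reinfection is impossible during this window,
$$|\mathcal I_i^S|=\sum_{j=1}^a 1_{\{W_j>\zeta_i\}}.$$
Given $\zeta_i=t$, these indicators are i.i.d.\ Bernoulli with success probability $\P(W_j>t)=e^{-t}$, so $|\mathcal I_i^S|$ is $\text{Bin}(a,e^{-t})$-distributed. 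Integrating against the density of $\zeta_i$,
$$\P\left(|\mathcal I_i^S|=b\,\bigg\vert\,\mathcal F_{\tau_i^R}\right)=\alpha\binom{a}{b}\int_0^\infty e^{-\alpha t}e^{-bt}(1-e^{-t})^{a-b}\,dt.$$

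The substitution $u=e^{-t}$, with $dt=-du/u$ and $e^{-\alpha t}=u^{\alpha}$ (and $t\!:0\!\to\!\infty$ corresponding to $u\!:1\!\to\!0$), turns the integral into
$$\alpha\binom{a}{b}\int_0^1 u^{b+\alpha-1}(1-u)^{a-b}\,du=\alpha\binom{a}{b}B(b+\alpha,a-b+1),$$
which is the first claimed expression. The Gamma-ratio form then follows from $\binom{a}{b}=\Gamma(a+1)/(\Gamma(b+1)\Gamma(a-b+1))$ and $B(z_1,z_2)=\Gamma(z_1)\Gamma(z_2)/\Gamma(z_1+z_2)$: the two $\Gamma(a-b+1)$ factors cancel, leaving exactly the stated ratio. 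I foresee no real obstacle; the only careful step is invoking the memoryless property to treat the leaf recovery clocks as fresh $\Exp(1)$'s at time $\tau_i^R$, which is standard from the graphical construction in Subsection \ref{subsec:graphical}.
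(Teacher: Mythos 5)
Your proof is correct and follows essentially the same route as the paper's: condition on $\mathcal F_{\tau_i^R}$, treat the residual recovery times of the $a=|\mathcal I_i^R|$ infected leaves as i.i.d.\ $\Exp(1)$ independent of $\zeta_i\sim\Exp(\alpha)$, integrate the resulting binomial probability against the $\Exp(\alpha)$ density, and substitute $u=e^{-t}$ to recognize the Beta function. Like the paper, you handle the degenerate $|\mathcal I_i^R|=0$ case separately and read the second summand as covering it, so there is nothing further to add.
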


\begin{proof}  
	We omit the subscripts $X/Y$. The case $a = 0$ is straightforward, so we will consider $a > 0$, which automatically implies that $\tau_i < \tau_i^R < \tau_i^S < \infty$. Conditioned on $\mathcal F_{\tau_i^R}$, let $H_1, H_2, \dots, H_a$ be the time it takes, starting from $\tau_i^R < \infty$, for the leaves in $\mathcal I^R_i$ to recover. Then $H_1, \dots, H_a$ are i.i.d. $\Exp(1)$, so we have
	\begin{align*}
		&1_{\{a > 0\}}\P\left(|\mathcal I^S_i|=b\ \bigg\vert\ \mathcal F_{\tau_i^R}, |\mathcal I_i^R| = a \right) \\
		=&\ 1_{\{a > 0\}} \binom{a}{a-b}\int_{0}^{\infty}\alpha e^{-\alpha x}\P(H_1,\dots , H_{a-b}<x)\cdot\P(x<H_{a-b+1},\dots, H_{a}) \,dx\\
		=&\ 1_{\{a > 0\}} \binom{a}{a-b}\int_{0}^{\infty}\alpha e^{-\alpha x}(1-e^{-x})^{a-b} e^{-bx} \,dx=\ 1_{\{a > 0\}} \alpha\binom{a}{a-b}\cdot B(b+\alpha, a-b+1),
	\end{align*}
	giving the desired result.
\end{proof}

The next lemma controls the probability that a round fails, conditioned on the number of infected leaves by the time the root recovers in the given round.
\begin{lemma}\label{lm:orderfail}
	For both processes $(X_t)$ and $(Y_t)$, the following holds: for any positive integer $a$, conditioned on $\{|\mathcal I_i^R| = a\}$, the probability that round $i$ fails, i.e. $\tau_{i + 1} = \infty$, is of order $(\la a)^{-\alpha}$. More precisely, 
	\begin{equation*}\label{lm:fail}
		\P\left (\tau_{i+1} = \infty\ \bigg\vert\ \mathcal F_{\tau_i^R}, |\mathcal I_i^R| = a \right ) = 1_{\{a = 0\}} +1_{\{a > 0\}} \cdot \Theta_{\alpha}\left( (\la a)^{-\alpha}\right).
	\end{equation*}
\end{lemma}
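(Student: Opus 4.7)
Write $a := |\mathcal I_i^R|$. Since the dynamics relevant to whether round $i$ fails involve only events after $\tau_i^R$ (the root must be reinfected before any new leaf infection or leaf immunity event becomes relevant for the failure of the current round), the calculation is identical for $X$ and $Y$. The plan is to condition down to an elementary independence argument: first on $\mathcal F_{\tau_i^R}$, then on the immunity duration $\zeta_i := \tau_i^S - \tau_i^R \sim \Exp(\alpha)$, then on $b := |\mathcal I_i^S|$. Given $\zeta_i$, each of the $a$ infected leaves survives the immunity period independently with probability $e^{-\zeta_i}$ (by the rate-$1$ recovery clock), so $b \mid \zeta_i \sim \mathrm{Bin}(a, e^{-\zeta_i})$.

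The second step is the core observation. Given $b$ infected leaves at $\tau_i^S$, by memorylessness each surviving leaf independently races a rate-$1$ recovery clock against a rate-$\lambda$ clock that would infect the (now susceptible) root. The round fails iff every one of these $b$ leaves recovers before its infection event, and these $b$ events are independent Bernoulli trials each with probability $1/(1+\lambda)$ (using Lemma \ref{lm:ExpLessThanExpIsExpAgain}). Hence
\[
\P\bigl(\tau_{i+1} = \infty \;\big|\; \mathcal F_{\tau_i^R}, \zeta_i, b\bigr) \;=\; (1+\lambda)^{-b}.
\]
Averaging over $b$ gives $(1 - q e^{-\zeta_i})^a$ with $q := \lambda/(1+\lambda)$, and averaging over $\zeta_i$, then substituting $u = e^{-\zeta_i}$ and $w = qu$, yields the clean integral formula
\[
\P(\tau_{i+1}=\infty \mid \mathcal F_{\tau_i^R}) \;=\; \alpha \int_0^1 u^{\alpha-1}(1-qu)^a\,du \;=\; \frac{\alpha}{q^\alpha}\int_0^q w^{\alpha-1}(1-w)^a\,dw.
\]

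The remaining step is to show that this integral has order $(\lambda a)^{-\alpha}$ up to constants depending only on $\alpha$. Rescale $w = s/a$ to convert the integral to $a^{-\alpha}\int_0^{aq} s^{\alpha-1}(1-s/a)^a\,ds$. The pointwise limit $(1-s/a)^a \to e^{-s}$, together with the domination $(1-s/a)^a \le e^{-s}$ valid on $[0,a]$, shows that in the regime $aq \to \infty$ the integral converges to $\int_0^\infty s^{\alpha-1}e^{-s}\,ds = \Gamma(\alpha)$, giving $\P(\text{fails}) \asymp_\alpha \Gamma(\alpha+1)/(qa)^\alpha$. Since $q/\lambda = 1/(1+\lambda)$ is bounded (and bounded away from $0$ in the regimes of interest where $\lambda$ is controlled), this matches the claimed $\Theta_\alpha((\lambda a)^{-\alpha})$.

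\textbf{Main obstacle.} The bookkeeping is elementary, but uniformity in $a$ (and in $\lambda$) is the delicate part: when $aq$ is not large, the integral $\int_0^q w^{\alpha-1}(1-w)^a dw$ no longer converges to $\Gamma(\alpha)a^{-\alpha}$ but rather behaves like $q^\alpha/\alpha$, in which case the conditional failure probability is $\Theta_\alpha(1)$. To obtain matching upper and lower bounds of order $(\lambda a)^{-\alpha}$ simultaneously, I would split into the cases $aq$ small vs.\ $aq$ large and combine the elementary inequalities $e^{-aw/(1-w)} \le (1-w)^a \le e^{-aw}$ on $w \in [0, q]$ (with $q$ bounded away from $1$) to get tight two-sided bounds on the integral; finally, compare $q$ to $\lambda$ via $q \ge \lambda/(1+\lambda)$ to translate the result into the statement in terms of $\lambda a$.
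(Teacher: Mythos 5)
Your proposal is correct, and it shares the paper's probabilistic skeleton: condition on the configuration at the root's recovery time, note that leaf de-immunization is irrelevant until the root is reinfected (so $(X_t)$ and $(Y_t)$ give the same answer), thin the $a=|\mathcal I_i^R|$ infected leaves through the root's immunity window, and observe that each leaf still infected at $\tau_i^S$ independently fails to reinfect the root with probability $1/(1+\lambda)$. Where you genuinely differ is in how this quantity is evaluated. The paper first records the beta-binomial law of $|\mathcal I_i^S|$ (Lemma \ref{lm:UV}) and is then left with the series $\alpha\frac{\Gamma(a+1)}{\Gamma(\alpha+a+1)}\sum_{b=0}^{a}\frac{\Gamma(b+\alpha)}{\Gamma(b+1)}(1+\lambda)^{-b}$, which it estimates via Stirling, a recursion in $\alpha$, Gautschi's inequality, and an integral comparison; you integrate over $\zeta_i$ last, which collapses the same probability to the closed form $\alpha\int_0^1 u^{\alpha-1}(1-qu)^a\,du=\alpha q^{-\alpha}\int_0^q w^{\alpha-1}(1-w)^a\,dw$ with $q=\lambda/(1+\lambda)$, and the rescaling $w=s/a$ together with $e^{-aw/(1-w)}\le(1-w)^a\le e^{-aw}$ yields the two-sided estimate directly, bypassing Lemma \ref{lm:UV} and the $s_\alpha$ machinery — arguably a cleaner route to the same asymptotics. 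Your "main obstacle" is also diagnosed accurately, and it reflects a fuzziness in the lemma as stated rather than a gap in your argument: when $\lambda a\lesssim 1$ the conditional failure probability is $\Theta_\alpha(1)=\Theta_\alpha\left(\min\{1,(\lambda a)^{-\alpha}\}\right)$, so the literal two-sided bound only holds with $\lambda a$ bounded below, and the constants can be $\lambda$-free only when $\lambda$ is bounded above (so that $q$ is comparable to $\lambda$ and bounded away from $1$); the paper's own proof implicitly makes the same restrictions (it uses $\lambda\le 1/2$, argues "as $a\to\infty$", and bounds the finite sum by the infinite series), and in the downstream applications only the regime where the relevant $\lambda\,\E|\mathcal I_i^R|\asymp\lambda^2 n$ is large matters, where both arguments are sound.
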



\begin{proof}[Proof of Lemma \ref{lm:orderfail}] 
	Again, we omit the subscripts $X/ Y$. We have
	\begin{align*}
		1_{\{a > 0\}}\P\left (\tau_{i+1} = \infty\ \bigg\vert\ \mathcal F_{\tau_i^R}, |\mathcal I_i^R| = a \right ) 
		&= 1_{\{a > 0\}}\sum_{b = 0}^a \P\left(\tau_{i+1} = \infty, |\mathcal I_i^S| = b\ \bigg\vert\ \mathcal F_{\tau_i^R}\right) \\
		&= 1_{\{a > 0\}}\sum_{b=0}^a \E\left( 1_{\{|\mathcal I_i^S| = b\}} \E\left(1_{\{\tau_{i+1} = \infty\}}\ \bigg\vert\ \mathcal F_{\tau_i^S}\right) \ \bigg\vert\ \mathcal F_{\tau_i^R}\right).
	\end{align*}
	Note that on $\{|\mathcal I_i^S| = b\}$ and conditioned on $\mathcal F_{\tau_i^S}$, the probability that round $i$ fails is precisely when all $b$ leaves recover before they can reinfect the root. This happens mutually independently with probability $1/(\la + 1)$. Together with Lemma \ref{lm:UV}, the above is
	\begin{align}
		\sum_{b=0}^a \dfrac{1}{(\la + 1)^b} \cdot \alpha \dfrac{\Gamma(a + 1)}{\Gamma(\alpha + a + 1)} \cdot \dfrac{\Gamma(b + \alpha)}{\Gamma (b + 1)}  
		&= \alpha\frac{\Gamma(a+1)}{\Gamma(\alpha+a+1)} \sum_{b=0}^{a}\frac{\Gamma(b+\alpha)}{\Gamma(b+1)}\cdot\frac{1}{(1+\lambda)^b} \nonumber \\
		&= \Theta_{\alpha}\left(\frac{1}{a^{\alpha}}\cdot \dfrac{1}{\la^{\alpha}}\right) \label{eq:GautschiSum}
	\end{align}
	where in the last equality, we applied Stirling's approximation to the first fraction and algebraic manipulation using properties of the Gamma function to the sum. A detailed calculation for the sum can be found in Appendix \ref{app:GautschiSum}.
	%
	%
	%
	%
\end{proof}

Moreover, in the proof above, if we take $\la \ge 1/2$, then we get the following.
\begin{lemma}\label{lm:orderfail_LambdaConst}
	For $\la \ge 1/2$, then for both processes $(X_t)$ and $(Y_t)$, we have the following: for any positive integer $a$, conditioned on $\mathcal F_{\tau_i^R}$, when $|\mathcal I_i^R| = a > 0$, the probability that round $i$ fails, i.e. $\tau_{i+1} = \infty$, is of order $\Theta_{\alpha}(a^{-\alpha})$.
\end{lemma}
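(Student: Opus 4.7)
The plan is to recycle the computation from the proof of Lemma \ref{lm:orderfail} almost verbatim, observing that the assumption $\lambda \leq 1/2$ was used only to control the dependence of the series $s_\alpha(x) = \sum_{b \ge 0} \tfrac{\Gamma(b+\alpha)}{\Gamma(b+1)}x^b$ on $\lambda$ when $x = 1/(1+\lambda)$. When $\lambda$ is treated as a constant in $n$, this series simply contributes a $\Theta_{\alpha,\lambda}(1)$ factor and no further work is needed.

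First I would reduce, by the same conditioning on $\mathcal F_{\tau_i^R}$ and application of Lemma \ref{lm:UV}, the failure probability to
\[
\P\!\left(\tau_{i+1}=\infty \,\bigg|\, \mathcal F_{\tau_i^R}\right) \;=\; \alpha\,\frac{\Gamma(a+1)}{\Gamma(\alpha+a+1)}\sum_{b=0}^{a}\frac{\Gamma(b+\alpha)}{\Gamma(b+1)}\cdot\frac{1}{(1+\lambda)^{b}},
\]
where $a := |\mathcal I_i^R| > 0$. This identity was derived in the proof of Lemma \ref{lm:orderfail} without using any upper bound on $\lambda$, so it carries over unchanged.

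Next I would analyze the two factors. The first, $\alpha\,\Gamma(a+1)/\Gamma(\alpha+a+1)$, is $\Theta_\alpha(a^{-\alpha})$ by Stirling, exactly as before. For the second, note that since $\lambda$ is now a positive constant (independent of $n$), the quantity $x := 1/(1+\lambda)$ is a fixed number in $(0,1)$ bounded away from $1$, and hence
\[
\sum_{b=0}^{a}\frac{\Gamma(b+\alpha)}{\Gamma(b+1)}x^{b} \;\le\; s_\alpha(x) \;<\; \infty.
\]
Moreover the truncated sum is bounded below by its $b=0$ term $\Gamma(\alpha) > 0$, so it is $\Theta_{\alpha,\lambda}(1)$ uniformly in $a$.

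Combining these two estimates yields $\P(\tau_{i+1}=\infty \mid \mathcal F_{\tau_i^R}) = \Theta_{\alpha,\lambda}(a^{-\alpha})$ on $\{|\mathcal I_i^R| > 0\}$, which is the claim. The explanation that the identity applies to both $(X_t)$ and $(Y_t)$ is identical to the one given in Lemma \ref{lm:orderfail} (once the root recovers, deimmunization of the leaves is irrelevant until the root is reinfected or the round fails), so no separate argument is required. There is no real obstacle here; the lemma is essentially a corollary of the bookkeeping already done for Lemma \ref{lm:orderfail}, with the $\lambda$-dependence absorbed into the implicit constant.
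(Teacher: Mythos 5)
Your proposal is correct and follows essentially the same route as the paper, which proves this lemma simply by rerunning the computation of Lemma \ref{lm:orderfail} with $\la$ treated as a constant, so that the factor $\sum_{b=0}^{a}\frac{\Gamma(b+\alpha)}{\Gamma(b+1)}(1+\la)^{-b}$ is absorbed as a $\Theta_{\alpha,\la}(1)$ quantity while the prefactor $\alpha\,\Gamma(a+1)/\Gamma(\alpha+a+1)$ gives the $\Theta_\alpha(a^{-\alpha})$ behavior. Your observation that the identity from Lemma \ref{lm:UV} and the $1/(1+\la)$ reinfection probability never used the bound $\la\le 1/2$ is exactly the point the paper relies on.
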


\subsection{Infecting power of the root}\label{subsec:LowerInfectingPowerOfRootDiffXY}
%
For process $(X_t)$, let $p_{X, S \mapsto I}(x)$ be the probability that at any time $t$, conditioned on the fact that a leaf $v$ is susceptible at time $t$, and the root $\rho$ is infected throughout $[t, t+x]$, the aforementioned leaf is infected at time $t + x$. We define $p_{X, R \mapsto I}(x), p_{X, I \mapsto I}(x)$ similarly. 

For process $(Y_t)$, we define $p_{Y, S \mapsto I}(x)$ and $p_{Y, I \mapsto I}(x)$ similarly (note that $p_{Y, R \mapsto I}$ is not defined, since the leaves are never immune in the process $(Y_t)$). We first prove the following lemma, which states that if $x$ is large, then $p_{S \mapsto I}(x), p_{I \mapsto I}(x), p_{R \mapsto I}(x)$ are also large.

\begin{lemma}\label{lm:goodRoundBdedTrans}
	For all $\varepsilon \in (0, 1)$, there exists a constant $C = C(\alpha, \varepsilon) \in (0, 1)$ such that the following holds: for $x \ge \varepsilon$, 
	$$p_{X, S \mapsto I}(x),\ p_{X, I \mapsto I}(x),\ p_{X, R \mapsto I}(x) \ge C\la.  $$
\end{lemma}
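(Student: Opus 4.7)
The plan is to exploit the fact that, conditional on the root being infected throughout $[t, t+x]$, the leaf evolves as a three-state continuous-time Markov chain on $\{S, I, R\}$ with transitions $S \to I$ at rate $\lambda$, $I \to R$ at rate $1$, and $R \to S$ at rate $\alpha$. Its unique stationary distribution satisfies $\pi_I = \lambda \alpha / (\alpha + \lambda(\alpha + 1)) = \Theta_\alpha(\lambda)$ for $\lambda$ bounded. I would split the analysis into a bounded regime $\varepsilon \le x \le T_0$ and a long-time regime $x > T_0$, where $T_0 = T_0(\alpha, \varepsilon)$ is a threshold to be chosen.

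In the bounded regime, the strategy is to exhibit an explicit favorable sub-event of the PPPs in each starting case. For $p_{X, S \mapsto I}(x)$: set $\eta := \varepsilon/2$ and take the event that the first event of $\mathfrak{H}_{\rho v}$ after time $t$ falls at some time $s \in [t + x - \eta, t + x]$, and that $\mathfrak{Q}_v$ has no event in $[s, t+x]$. Then the leaf stays susceptible on $[t, s)$, becomes infected at $s$, and remains infected through $t+x$; independence of the PPPs gives probability $\int_{x - \eta}^{x} \lambda e^{-\lambda u} e^{-(x - u)}\, du = \Omega_{\alpha, \varepsilon}(\lambda)$ uniformly in $x \le T_0$ (assuming $\lambda$ bounded). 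For $p_{X, I \mapsto I}(x)$: the event that $\mathfrak{Q}_v$ has no event on $[t, t+x]$ has probability $e^{-x} \ge e^{-T_0} = \Omega_{\alpha, \varepsilon}(1) \ge C\lambda$ for bounded $\lambda$. For $p_{X, R \mapsto I}(x)$: require a $\mathfrak{D}_v$ event in $[t, t + \eta/2]$ (probability $\Omega_\alpha(1)$) and then reapply the $S \mapsto I$ construction on the remaining window.

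In the long-time regime, I would invoke the ergodicity of the three-state generator: its nonzero eigenvalues are the roots of $\mu^2 + (\lambda + 1 + \alpha)\mu + (\lambda + \lambda\alpha + \alpha)$, whose real parts are bounded below in absolute value by a constant $\delta_0(\alpha) > 0$ uniformly for $\lambda$ in a bounded range (checking the real and complex discriminant cases separately). This yields $|p_{X, \cdot \mapsto I}(x) - \pi_I| \le C_1 e^{-\delta_0 x}$, so choosing $T_0$ large enough in terms of $\alpha$ and $\varepsilon$ gives $p_{X, \cdot \mapsto I}(x) \ge \pi_I / 2 = \Omega_\alpha(\lambda)$ for all $x > T_0$. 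The hard part will be making the spectral-gap bound uniform across the real and complex discriminant regimes of $(\lambda - 1 - \alpha)^2 - 4\alpha$; a purely probabilistic alternative---bounding the first return time to state $I$ from each initial state---would avoid the spectral computation but requires its own case analysis on the starting state.
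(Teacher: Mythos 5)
Your bounded-regime construction ($\varepsilon \le x \le T_0$) is essentially the paper's argument: the paper likewise exhibits an explicit favorable trajectory (for the $R$ start: deimmunize, then get infected, then no recovery before $t+x$), computes its probability as $\la f(x)$ with $f$ bounded below on $[\varepsilon,1]$, and treats the $S$ and $I$ starts analogously. The divergence --- and the genuine gap --- is in your treatment of large $x$. You invoke $|p_{X,\cdot\mapsto I}(x)-\pi_I|\le C_1 e^{-\delta_0 x}$ with $C_1,\delta_0$ depending only on $\alpha$, and claim that a threshold $T_0=T_0(\alpha,\varepsilon)$ yields $p\ge \pi_I/2$ for all $x>T_0$. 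That conclusion needs $C_1 e^{-\delta_0 T_0}\le \pi_I/2=\Theta_\alpha(\la)$, which cannot hold with $T_0,C_1,\delta_0$ independent of $\la$ in the regime the lemma is actually used in, where $\la\to 0$ (e.g.\ $\la\asymp n^{-1/2}$, the main scale in the paper). Nor can the prefactor be upgraded to $O(\la)$ uniformly over starting states: from the start in $I$ one has $p_{X,I\mapsto I}(x)\ge e^{-x}$, so at times of order one the deviation from $\pi_I$ is of constant order, and the two-sided bound you quote genuinely carries a $\Theta(1)$ prefactor. Consequently, for $T_0<x\lesssim \log(1/\la)$ your long-time bound is vacuous (its right-hand side can be negative), and this intermediate window is not covered by your bounded-regime analysis either, since there $T_0$ is a constant. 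If instead you let $T_0$ grow like $\delta_0^{-1}\log(1/\la)$ to kill the tail, the crude $I\mapsto I$ event (no recovery, probability $e^{-x}\ge e^{-T_0}\asymp \la^{1/\delta_0}$) drops below $C\la$ whenever $\delta_0<1$ (e.g.\ $\alpha<1$), so the two regimes do not glue without an additional construction (a ``recover, deimmunize, reinfect near the end'' event for the $I$ start at intermediate times), which you have not supplied.

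The paper sidesteps all of this with a one-line Markov step: for $x>1$ it conditions on the leaf's state at time $x-1$; whatever that state is, the unit-window bounds (your bounded-regime case with window length $1$) give probability at least $C\la$ of being infected at time $x$. Replacing your stationarity/spectral-gap step by this conditioning --- equivalently, building the favorable trajectory only inside the final unit of time, regardless of the starting state --- closes the gap and removes any need for the spectral computation or the real/complex discriminant case analysis you flag as the hard part.
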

\begin{proof}
	We omit the subscripts $X$ for convenience, and first consider $\varepsilon \le x \le 1$. If $v \in \tilde{R}_t$, let $D_v \sim \Exp(\alpha), H_v \sim \Exp(\la),$ and $Q_v \sim \Exp(1)$ respectively denotes the amount of time it takes (starting from $t$) for $v$ to lose immunity, then infected, and then recover (assuming that these happen). Note that $D_v, H_v, Q_v$ are independent, so for all $t > 0$, 
	\begin{align*}
		&\P\left(v \in I'_{t + x} \  \bigg\vert \ v \in R'_t, \rho \in I'_{t + y} \forall y \in [0, x] \right) \ge \P\left( D_v + H_v \le x < D_v + H_v + Q_v \right) \\
		=&\ \la\dfrac{\alpha}{1 - \la}\left( \dfrac{e^{-\la x} - e^{-\alpha x}}{\alpha - \la} - \dfrac{e^{-x} - e^{-\alpha x}}{\alpha - 1} 1_{\{\alpha \neq 1\}} - e^{-x}x 1_{\{\alpha = 1\}} \right)\ge C\la,
	\end{align*}
	The proof for $p_{S \mapsto I}(x), p_{I \mapsto I}(x)$ is completely analogous. 
	For $x> 1$, we simply condition on the status of the leaf after $(x-1)$ units of time and then apply these bounds for the next unit of time.
\end{proof}
We recall that $\xi_{i}$ is the time it takes for the root to recover in round i. As an immediate corollary for Lemma \ref{lm:goodRoundBdedTrans} (and note that conditioned on $\mathcal F_{\tau_{X, i}}$ and $\xi_{X, i}$, the states of the leaves at $\tau_{X, i}^R$ are mutually independent), we have the following.
\begin{corollary}\label{cor:goodRoundBdedTrans}
	On $\{\tau_i < \infty\}$ and $\{\xi_{X, i} > \varepsilon\}$, conditioned on $\sigma(\mathcal F_{\tau_{X, i}}, \xi_{X, i})$, we have
	$$ |\mathcal I_{X, i}^R| \succeq \text{Bin}(n, C\la),$$
	where $C = C(\alpha, \varepsilon)$ is the constant in Lemma \ref{lm:goodRoundBdedTrans}. 
	
	As such, using a Chernoff bound, with probability $1 - O\left( \exp(-\Theta_{\alpha, \varepsilon} (\la^2 n)) \right)$, 
	$$ |\mathcal I_{X, i}^R| \ge \dfrac{C\la n}{2} \approx_{\alpha, \varepsilon} \la n.$$
\end{corollary}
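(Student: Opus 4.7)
The plan is to reduce $|\mathcal I_{X,i}^R|$ to a sum of conditionally independent Bernoullis, each with success parameter at least $C\lambda$, then invoke a multiplicative Chernoff bound.

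First I would establish conditional independence of the leaves at time $\tau_{X,i}^R$. Conditioning on $\sigma(\mathcal F_{\tau_{X,i}}, \xi_{X,i})$ fixes every vertex's state at $\tau_{X,i}$ and, crucially, pins the root to being infected throughout the interval $[\tau_{X,i}, \tau_{X,i}^R]$, since $\xi_{X,i}$ is determined by the root's recovery clock $\mathfrak{Q}_\rho$. Because every leaf's only neighbor is the root and the root's state is fixed on this interval, the indicator $I_{\tau_{X,i}^R}(v_j)$ is a deterministic function of the state of $v_j$ at $\tau_{X,i}$ together with the restriction of $(\mathfrak{Q}_{v_j}, \mathfrak{D}_{v_j}, \mathfrak{H}_{\rho v_j})$ to $[\tau_{X,i}, \tau_{X,i}^R]$. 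By the strong Markov property of the underlying PPPs at the stopping time $\tau_{X,i}$ and the independence of clocks across distinct vertices and edges, these triples are mutually independent across $j$ and independent of $\mathcal F_{\tau_{X,i}}$; since $\xi_{X,i}$ is measurable with respect to $\mathfrak{Q}_\rho$, further conditioning on $\xi_{X,i}$ preserves the independence.

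For each $j$, the conditional probability that $v_j \in I_{\tau_{X,i}^R}$ therefore equals $p_{X, a_j \mapsto I}(\xi_{X,i})$, where $a_j \in \{\mathrm{S}, \mathrm{I}, \mathrm{R}\}$ is the state of $v_j$ at $\tau_{X,i}$ (a measurable function of $\mathcal F_{\tau_{X,i}}$). On $\{\xi_{X,i} > \varepsilon\}$, Lemma \ref{lm:goodRoundBdedTrans} uniformly lower-bounds this probability by $C\lambda$ with $C = C(\alpha, \varepsilon)$, regardless of which of the three states $a_j$ takes. By the standard per-coordinate Bernoulli coupling, $|\mathcal I_{X,i}^R| = \sum_{j=1}^n I_{\tau_{X,i}^R}(v_j)$ then stochastically dominates $\mathrm{Bin}(n, C\lambda)$, giving the first assertion of the corollary.

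Finally, I would apply the multiplicative Chernoff bound
\[
\P\bigl(\mathrm{Bin}(n, C\lambda) \le C\lambda n/2\bigr) \le \exp(-C\lambda n/8),
\]
and note that under the standing regime $\lambda \le 1$ one has $C\lambda n/8 \ge (C/8)\lambda^2 n$, yielding a bound of the form $\exp(-\Theta_\varepsilon(\lambda^2 n))$ as stated. I do not foresee a real obstacle; the only point requiring care is the clean conditional independence of the leaves, which holds precisely because the star has no leaf-to-leaf edges and the root's trajectory on $[\tau_{X,i}, \tau_{X,i}^R]$ is fully pinned by the conditioning.
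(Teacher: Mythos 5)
Your proposal is correct and follows exactly the route the paper intends: the paper states this as an immediate corollary of Lemma \ref{lm:goodRoundBdedTrans}, with the parenthetical remark about conditional independence of the leaves given $\sigma(\mathcal F_{\tau_{X,i}},\xi_{X,i})$ being precisely the point you flesh out. Your per-leaf lower bound via $p_{X,a_j\mapsto I}(\xi_{X,i})\ge C\la$, the resulting binomial domination, and the Chernoff step (with $\la n\ge \la^2 n$ since $\la\le 1$ in the standing regime) are all as the paper envisions.
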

	%
	%

We note that  the results in Lemma \ref{lm:goodRoundBdedTrans} and Corollary \ref{cor:goodRoundBdedTrans} also translate to the process $(Y_t)$ (without $p_{Y, R \mapsto I}$).
We now control the number of infected leaves when the root recovers in each round. 
This allows us to provide an upper bound on the mean number of infected vertices in each round.

\begin{lemma}\label{lm:root:inf}
	Consider the process $(Y_t)$. For each round, conditioned on the previous round being successful and $n$ sufficiently large, the number of infected leaves by the time the root recovers is, on average, $O_{\alpha}(\la n) > 0$. More precisely, there exist constants $C, N > 0$ only dependent on $\alpha$ such that for all $n \ge N$,
	$$\E\left(|\mathcal I_{Y, i}^R| \ \bigg\vert\ \tau_i < \infty\right) \le C\la n. $$
	By Lemma \ref{lm:couplingLemma}, the same holds for $(X_t)$.
\end{lemma}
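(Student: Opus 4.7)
The plan is to reduce the lemma to a bound on $\E[|\mathcal I_{Y,i}|\mid\tau_i<\infty]$ via a single-round CTMC computation, then propagate the bound through the rounds via an inductive recursion, and finally upgrade the weighted bound to the conditional one using an auxiliary coupling.

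Conditioned on $\mathcal F_{\tau_i}$ (which contains $\{\tau_i<\infty\}$), the root stays infected for an independent $\xi_i\sim\Exp(1)$, during which each leaf independently follows a 2-state Markov chain with $S\to I$ at rate $\lambda$ and $I\to S$ at rate $1$. Integrating the one-leaf transition probabilities against the $\Exp(1)$ density of $\xi_i$, the marginal probability that a given leaf is in state $I$ at $\tau_i^R$ equals $\lambda/(2+\lambda)$ if it started in $S$ and $(1+\lambda)/(2+\lambda)$ if it started in $I$. Summing over the $n$ leaves yields
\[
\E\bigl[|\mathcal I_{Y,i}^R|\bigm|\mathcal F_{\tau_i}\bigr] \;=\; \frac{n\lambda+|\mathcal I_{Y,i}|}{2+\lambda},
\]
so the lemma reduces to showing $\E[|\mathcal I_{Y,i}|\mid\tau_i<\infty]\le C'\lambda n$. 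Since leaves in $(Y_t)$ have no immunity state and the root is not in state $I$ throughout $[\tau_{i-1}^R,\tau_i]$, leaves can only recover (never be reinfected) between rounds, giving the pointwise bound $|\mathcal I_{Y,i}|\le|\mathcal I_{Y,i-1}^R|$.

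Setting $\phi_i:=\E[|\mathcal I_{Y,i}^R|\cdot 1_{\tau_i<\infty}]$ and combining the two observations above, I obtain the linear recursion
\[
\phi_i \;\le\; \frac{n\lambda\cdot\P(\tau_i<\infty)+\phi_{i-1}}{2+\lambda} \;\le\; \frac{n\lambda+\phi_{i-1}}{2+\lambda},
\]
and starting from $\phi_1\le n\lambda/(2+\lambda)$ an easy induction gives $\phi_i\le n\lambda/(1+\lambda)$ for every $i\ge 1$.

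The main obstacle is converting the weighted bound $\phi_i\le n\lambda/(1+\lambda)$ into the desired conditional expectation $\E[|\mathcal I_{Y,i}^R|\mid\tau_i<\infty]\le C\lambda n$, since naively dividing by $\P(\tau_i<\infty)$ is too lossy for large $i$. I plan to close the gap by pointwise coupling $(Y_t)$ with an auxiliary process $(W_t)$ in which each leaf independently runs the same 2-state chain as though the root were always infected; this yields $|\mathcal I_{Y,t}|\le|\mathcal I_{W,t}|$ almost surely, and in $(W_t)$ the leaves are i.i.d.\ Bernoullis of parameter at most $\lambda/(1+\lambda)$ at every fixed time, so $|\mathcal I_{W,t}|$ concentrates around $n\lambda/(1+\lambda)$ with Chernoff-type exponential tails uniformly in $t$. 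The technical difficulty is that $\tau_i^R$ depends on the leaves' private clocks, so the uniform Chernoff bound does not apply directly at the random time $\tau_i^R$; I expect to handle this by a decoupling argument (treating one leaf at a time via a reduced process where that leaf is removed) combined with the $\phi_i$-bound. The corresponding statement for $(X_t)$ then follows from Lemma \ref{lm:couplingLemma}.
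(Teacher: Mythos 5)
Your single-round computation, the pointwise bound $|\mathcal I_{Y,i}|\le|\mathcal I_{Y,i-1}^R|$, and the weighted recursion giving $\phi_i:=\E\bigl[|\mathcal I_{Y,i}^R|1_{\{\tau_i<\infty\}}\bigr]\le n\la/(1+\la)$ are all correct, but they stop short of the statement: the lemma asks for the expectation \emph{conditioned on} $\{\tau_i<\infty\}$, uniformly in $i$ (the upper-bound proof sums over all rounds), and as you yourself note, dividing the weighted bound by $\P(\tau_i<\infty)$ is hopeless since that probability decays geometrically in $i$, roughly like $(1-c(\la^2n)^{-\alpha})^{i}$. The repair you sketch does not close this gap. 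The domination $|\mathcal I_{Y,t}|\le|\mathcal I_{W,t}|$ is fine, but the conditioning event $\{\tau_i<\infty\}$ is measurable with respect to the very clocks driving $W$ and is positively correlated with having many infected leaves; a fixed-time Chernoff bound for $W$ can only be transferred to the random time $\tau_i^R$ by a union bound over a time horizon of order $i$, producing an error term of the shape $n\,i\,e^{-c\la n}/\P(\tau_i<\infty)\approx n\,i\,e^{-c\la n+ci(\la^2n)^{-\alpha}}$, which blows up once $i$ is large. The ``decoupling, one leaf at a time, combined with the $\phi_i$-bound'' that you invoke to fix this is exactly the hard part of the lemma, and it is left as a hope rather than an argument; as written the proof is incomplete at its crux.

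For comparison, the paper never leaves conditional expectations, and the missing ingredient is a quantitative bound on the survival bias \emph{per round}. It first proves the crude bound $\P(\tau_{i+1}<\infty\mid\tau_i<\infty)\ge\frac{\alpha+1/2}{\alpha+1}$ (Lemma \ref{lm:easyLowerBdNextRoundSuccess}); since $|\mathcal I_i^R|=0$ when round $i-1$ fails, this gives $\E(|\mathcal I_i^R|\mid\tau_i<\infty)\le\frac{\alpha+1}{\alpha+1/2}\,\E(|\mathcal I_i^R|\mid\tau_{i-1}<\infty)$, i.e.\ conditioning on one more round of survival inflates the expectation by at most a fixed factor. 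Combining this with $|\mathcal I_i|\le|\mathcal I_{i-1}^S|$ and with $\E(|\mathcal I_{i-1}^S|\mid\mathcal F_{\tau_{i-1}^R})=\frac{\alpha}{\alpha+1}|\mathcal I_{i-1}^R|$ from Lemma \ref{lm:UV}, the recursion for $a_i=\E(|\mathcal I_i^R|\mid\tau_i<\infty)$ has contraction coefficient $\frac{\alpha}{\alpha+1}\cdot\frac{\alpha+1}{\alpha+1/2}=\frac{\alpha}{\alpha+1/2}<1$, and Lemma \ref{lm:elementarySeqLem} concludes. If you want to salvage your route, you need an analogue of that per-round inflation estimate; the weighted recursion and the worst-case coupling with $W$ alone cannot supply it.
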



We first prove the following crude bound for success probability.

\begin{lemma}\label{lm:easyLowerBdNextRoundSuccess}
	Consider the process $(Y_t)$. There exist constants $C_0, N > 0$ only dependent on $\alpha$ such that for all $n \ge N$, the following holds: for all $i \ge 1$ and $\la \ge  C_0/\sqrt{n}$, we have
	$$\P\left( \tau_{i+1} < \infty \ \bigg\vert \ \tau_i< \infty\right) \ge \dfrac{\alpha + 1/2}{\alpha + 1}. $$
\end{lemma}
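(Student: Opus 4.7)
The plan is to bound the failure probability $\P(\tau_{i+1}=\infty \mid \tau_i < \infty)$ by reducing to Lemma \ref{lm:orderfail}, which controls the probability that a round fails once we know $|\mathcal I_{Y,i}^R|$. The guiding intuition is that if the root stays infected for at least some small constant amount of time, then by Corollary \ref{cor:goodRoundBdedTrans} (transported to $Y$ via Remark \ref{rem:goodRoundBdedTransY}) it will have broadcast to of order $\la n$ leaves, making the conditional failure probability of order $(\la^2 n)^{-\alpha}$, which is negligible for large $n$.

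Concretely, I fix a threshold $\varepsilon = \varepsilon(\alpha) > 0$ to be chosen small, and split according to the root's recovery duration $\xi_i$. On $\{\xi_i < \varepsilon\}$ I use the trivial upper bound $1$ for the conditional failure probability; since $\xi_i \sim \Exp(1)$ independently of $\mathcal F_{\tau_i}$ by the graphical construction, this event contributes at most $1 - e^{-\varepsilon} \le \varepsilon$. On the complementary event $\{\xi_i \ge \varepsilon\}$, Corollary \ref{cor:goodRoundBdedTrans} gives $|\mathcal I_{Y,i}^R| \ge C(\alpha,\varepsilon)\,\la n/2$ except on an exceptional subset of probability $\exp(-\Theta_\varepsilon(\la^2 n))$; on this good subset, Lemma \ref{lm:orderfail} bounds the conditional failure probability by $O_\alpha((\la^2 n)^{-\alpha})$.

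Assembling these three contributions yields
$$\P(\tau_{i+1}=\infty \mid \tau_i < \infty) \;\le\; \varepsilon \,+\, \exp\bigl(-\Theta_\varepsilon(\la^2 n)\bigr) \,+\, O_\alpha\bigl((\la^2 n)^{-\alpha}\bigr).$$
To finish I would choose $\varepsilon$ depending only on $\alpha$ so that $\varepsilon \le \tfrac{1}{4(\alpha+1)}$, and then take $n \ge N_1(\alpha, l(\cdot))$ large enough that each of the remaining two terms is also at most $\tfrac{1}{4(\alpha+1)}$. Summing gives $\P(\tau_{i+1}=\infty \mid \tau_i < \infty) \le \tfrac{1}{2(\alpha+1)}$, which rearranges to the stated lower bound $\tfrac{\alpha+1/2}{\alpha+1}$ on the success probability.

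The main obstacle is guaranteeing that the last two tail terms can be made small uniformly; this requires $\la^2 n$ to be large, which is precisely why the hypothesis $n \ge N_1(\alpha, l(\cdot))$ enters and why $N_1$ depends on the function $l(\cdot)$ (which enforces the growth of $\la^2 n$ as $n \to \infty$ in the regime of Theorem \ref{thm:mainthmY}). A secondary bookkeeping point is keeping the conditioning clean: one first conditions on $\sigma(\mathcal F_{\tau_i}, \xi_i)$ to deploy Corollary \ref{cor:goodRoundBdedTrans}, then on $\mathcal F_{\tau_i^R}$ to deploy Lemma \ref{lm:orderfail}, and finally uses the tower property under $\{\tau_i < \infty\}$ to assemble the pieces.
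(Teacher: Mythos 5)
Your proposal is correct and follows essentially the same route as the paper's proof: split on whether $\xi_i$ exceeds a constant threshold, use Corollary \ref{cor:goodRoundBdedTrans} (via Remark \ref{rem:goodRoundBdedTransY}) with a Chernoff bound to get $|\mathcal I_i^R| \gtrsim \la n$, apply Lemma \ref{lm:orderfail}, and take $n$ large so the two tail terms are negligible. The only cosmetic difference is that the paper restricts $\xi_i$ to a bounded interval $[r_1,r_2]$ with explicitly chosen endpoints rather than to $\{\xi_i \ge \varepsilon\}$, which changes nothing essential.
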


\begin{proof}
	We omit the subscripts $Y$ for convenience. Set $r_1 := - \log \left(\dfrac{\alpha + 7/8}{\alpha + 1}\right)$ and $r_2 := -\log \dfrac{1/8}{\alpha + 1}$, then $0 < r_1 < r_2$. Let $\mathcal A := \{\xi_i \in [r_1, r_2]\} \in \sigma(\mathcal F_{\tau_i}, \xi_i)$.
	

	Note that $\mathcal A$ is independent of $\{\tau_i < \infty\}$ (based on the definition of $\xi_i$), we have
	\begin{align*}
		\P\left(\tau_{i+1} = \infty \ \bigg\vert \ \tau_i < \infty \right) &\le \P\left(\mathcal A^c \ \bigg\vert \ \tau_i < \infty \right) + \P\left(|\mathcal I_i^R| \le C\la n \ \bigg\vert \ \tau_i< \infty, \mathcal A \right) \\
		&\ + \P\left(\tau_{i+1} = \infty \ \bigg\vert \tau_i < \infty, |\mathcal I_i^R| > C \la n, \mathcal A \right) \\
		&\le \left (1 - (e^{-r_1} - e^{-r_2})\right ) + \exp(-2C^2 (\la^2 n)) + O_{\alpha}((\la^2 n)^{-\alpha}),
	\end{align*}
	where in the last inequality, we used Corollary \ref{cor:goodRoundBdedTrans} for the second term and Lemma \ref{lm:orderfail} for the third term. This sum is less than $\frac{1}{2(\alpha + 1)}$ for sufficiently large $C_0, N$ (dependent on $\alpha$), proving the claim.
\end{proof}

We state another elementary lemma, which can be easily proved by induction.
\begin{lemma}\label{lm:elementarySeqLem}
	Suppose $C > 0$ and $r \in (0, 1)$, and a sequence $\{a_n\}_{n \ge 0}$ satisfies the following: $a_0 = 0, a_{n+1} \le C + ra_n$ for all $n \ge 0$. Then for all $n$, 
	$$ a_n \le \dfrac{C}{1-r}.  $$
\end{lemma}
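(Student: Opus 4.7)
The plan is to prove this by a direct induction on $n$, which is the most natural route given the recursive form of the hypothesis. The base case $a_0 = 0 \le C/(1-r)$ is immediate since $C > 0$ and $1 - r > 0$. For the inductive step, assuming $a_n \le C/(1-r)$, the recurrence gives
\[
a_{n+1} \le C + r a_n \le C + r \cdot \frac{C}{1-r} = \frac{C(1-r) + rC}{1-r} = \frac{C}{1-r},
\]
closing the induction.

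An equivalent approach, which perhaps exposes the underlying structure more transparently, is simply to iterate the inequality $a_{n+1} \le C + r a_n$ from the base case $a_0 = 0$. Unrolling yields
\[
a_n \le C\sum_{k=0}^{n-1} r^k \le C \sum_{k=0}^{\infty} r^k = \frac{C}{1-r},
\]
where the convergence of the geometric series uses $r \in (0,1)$. Either presentation works, and I would pick the induction since it is shorter and matches the way the lemma will presumably be invoked later in the paper.

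There is no genuine obstacle here: the lemma is elementary and the bound $C/(1-r)$ is exactly the fixed point of the map $x \mapsto C + rx$, so the inductive step is forced. The only thing to be careful about is ensuring the inequalities go in the correct direction (the hypothesis is a one-sided bound $a_{n+1} \le C + r a_n$, not equality, so monotonicity of $x \mapsto C + rx$ in $x$ is what makes the induction go through).
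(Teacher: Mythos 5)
Your induction proof is correct and is exactly the argument the paper has in mind (the paper simply states the lemma "can be easily proved by induction" without writing it out). Both the base case and the inductive step, using that $C/(1-r)$ is the fixed point of $x \mapsto C + rx$, are handled properly, so nothing further is needed.
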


We are now ready to prove the main lemma for this section.
\begin{proof}[Proof of Lemma \ref{lm:root:inf}]
	Again, we omit the subscript $Y$ for convenience. 
	By Lemma \ref{lm:easyLowerBdNextRoundSuccess}, for $n\ge N$,
	\begin{align}
		\E\left(|\mathcal I_i^R| \ \bigg\vert \ \tau_{i-1} < \infty \right)  
		&\ge \ \dfrac{\alpha + 1/2}{\alpha + 1}\E\left(|\mathcal I_i^R| \ \bigg\vert \ \tau_i < \infty \right). \label{eq:easyLowerBoundNextRoundSuccess1}
	\end{align}
	The idea now is to show that
	\begin{equation}\label{eq:easyLowerBoundNextRoundSuccess2}
		\E\left(|\mathcal I_i^R| \ \bigg\vert \ \tau_{i-1} < \infty \right)  \le \dfrac{\la n}{\la + 1} + \E\left(|\mathcal I_i| \ \bigg\vert \  \tau_{i-1} < \infty \right)
	\end{equation}
	where the first term on the right represents new infections in this round and the second term represents old infections carried over from previous rounds.
	Assuming this, together with \eqref{eq:easyLowerBoundNextRoundSuccess1}, and observing that $|\mathcal I_i| \le |\mathcal I_{i-1}^S|$, we have
	\begin{align*}
		\E\left(|\mathcal I_i^R| \ \bigg\vert \ \tau_i < \infty \right) &\le \dfrac{\alpha + 1}{\alpha + 1/2}\cdot\dfrac{\la n}{\la + 1} + \dfrac{\alpha + 1}{\alpha + 1/2}\E\left(|\mathcal I_{i-1}^S| \ \bigg\vert \  \tau_{i-1} < \infty \right) \\
		&= \dfrac{\alpha + 1}{\alpha + 1/2}\cdot\dfrac{\la n}{\la + 1} + \dfrac{\alpha + 1}{\alpha + 1/2}\E\left(\E\left(|\mathcal I_{i-1}^S|\ \bigg\vert\ \mathcal F_{\tau_{i-1}^R},  \{\tau_{i-1} < \infty\} \right)\ \bigg\vert\ \tau_{i-1} < \infty\right)\\
		&= \dfrac{\alpha + 1}{\alpha + 1/2} \cdot \dfrac{\la n}{\la + 1} + \dfrac{\alpha}{\alpha + 1/2} \E\left( |\mathcal I_{i-1}^R| \ \bigg\vert\ \tau_{i-1} < \infty\right)\text{ by Lemma \ref{lm:UV}.}
	\end{align*}
	By setting $a_i := \E\left(|\mathcal I_i^R| \ \bigg\vert \ \tau_i < \infty \right) $ (we denote $a_0 = 0$), and applying Lemma \ref{lm:elementarySeqLem}, we have
	$$ \E\left(|\mathcal I_i^R| \ \bigg\vert \ \tau_i < \infty \right) \le 2(\alpha + 1) \dfrac{\la n}{\la + 1} \le 2(\alpha + 1) \la n, $$
	as desired. So it remains to show \eqref{eq:easyLowerBoundNextRoundSuccess2}.
	
	Conditioned on $\mathcal F{\tau_i}$ and let
		$v \in \mathcal S_i$.
		Conditioned on $\xi_i$ and consider the independent PPP's $\mathfrak{H}_{\rho v} \sim \PPP(\la)$ and $\mathfrak{Q}_v \sim \PPP(1)$ within the time interval $[\tau_i, \tau_i + \xi_i]$, $v \in \mathcal I_i^R$ only if the sum PPP $(\mathfrak{H}_{\rho v} + \mathfrak{Q})$ has at least one event within the time interval, and the last event originally belongs to $\mathfrak{H}_{\rho v}$. This happens with probability at most $\frac{\la}{\la + 1}$, and by Law of Total Expectation, we can get rid of $\xi_i$ in the conditional probability, therefore
		$$  1_{\{v \in \mathcal S_i\}}\E\left( 1_{\{v \in \mathcal I_i^R\}} \ \bigg\vert \mathcal F_{\tau_i} \right) \le \dfrac{\la}{\la + 1}1_{\{\tau_i < \infty\}}1_{\{v \in \mathcal S_i\}}   \le \dfrac{\la}{\la + 1}1_{\{v \in \mathcal S_i\}}. $$
	For $(Y_t)$, note that $|\mathcal S_i| = n - |\mathcal I_i|$, we have
	$$\E\left(|\mathcal I_i^R| \ \bigg\vert \ \mathcal F_{\tau_i} \right) \le |\mathcal I_i| + \dfrac{\la}{\la + 1}(n - |\mathcal I_i|) \le \dfrac{\la n}{\la + 1} + |\mathcal I_i|, $$
	and taking $\E \left( \cdot \mid \tau_{i-1} < \infty \right)$ on both sides gives \eqref{eq:easyLowerBoundNextRoundSuccess2}, completing the proof for $(Y_t)$. By Lemma \ref{lm:couplingLemma}, we get the bound for $(X_t)$.
\end{proof}

\subsection{Proof of the upper bounds}
Combining these lemmas, we now prove the upper bound for Theorems \ref{thm:mainthm} and \ref{thm:mainthmY}. Let $E_i :=\{ \tau_i < \infty\}$ be the event that the first $(i-1)$ rounds are successful.

{\underline {\bf  Probability of surviving to round $i$.}}	Note that conditioned on the first $(i-1)$ rounds being successful, we have
\begin{align*}
	\Pr\left(E_{i+1}^c \ \bigg\vert \ E_i \right)
	&\ge \E\left(1_{\{|\mathcal I_i^R| \ge 1\}} \E\left( 1_{\{\tau_{i+1} = \infty\}} \ \bigg\vert \  \mathcal F_{\tau_i^R}, E_i \right) \ \bigg\vert \ E_i \right),\\
	&\approx_{\alpha} \la^{-\alpha} \E\left(1_{\{|\mathcal I_i^R| \ge 1\}} |\mathcal I_i^R|^{-\alpha} \ \bigg\vert \ E_i \right) \text{ by Lemma \ref{lm:orderfail}}\\
	&\gtrsim_{\alpha} \la^{-\alpha}\left(\E\left(1_{\{|\mathcal I_i^R| \ge 1\}} |\mathcal I_i^R| \ \bigg\vert \ E_i \right)\right)^{-\alpha} \text{ by Jensen's inequality}\\
	&\gtrsim_{\alpha} (\la^2 n)^{-\alpha} \text{ by Lemma \ref{lm:root:inf}}.
\end{align*}
%
By induction on $i$, we obtain for some sufficiently small constant $c' = c'(\alpha) > 0$,
$$\Pr(E_{i+1}) \le (1 - c'(\la^2n)^{-\alpha})\Pr(E_i) \le (1 - c'(\la^2 n)^{-\alpha})^i. $$
{\underline{\bf Length of a successful round.}} The length of a successful round $i$ is 
\begin{align*}
	\E ((\tau_{i+1}-\tau_i)1_{\{\tau_{i+1} < \infty\} }) &= \E ((\tau_{i+1}-\tau_i^S)1_{\{\tau_{i+1} < \infty\} }) + \E ((\tau_i^S-\tau_i)1_{\{\tau_{i+1} < \infty\} }).
	\intertext{Note that the duration of time that the root recovers and becomes susceptible again is independent of the status of the leaves at the beginning of the round, so}
	\E ((\tau_i^S-\tau_i)1_{\{\tau_{i+1} < \infty\} }) 
	&\le \E\left(1_{\{\tau_i < \infty\}}  \E\left((\tau_i^S - \tau_i) \ \bigg\vert E_i\ \right)  \right) \\
	&= \left(1 + \dfrac{1}{\alpha}\right) \E 1_{\{\tau_i <\infty\}} \le \left(1 + \dfrac{1}{\alpha}\right) \left(1 - c'(\la^2 n)^{-\alpha}  \right)^{i-1},
\end{align*}
so it remains to upper bound $\E ((\tau_{i+1}-\tau_i^S)1_{\{\tau_{i+1} < \infty\} }) $, which can be crudely bounded by  the time it takes for all the infected leaves to recover, which is of order $\log (\la n)$. The following argument provides a better bound that avoids having the extra $\log(\la n)$ term.
For  $1_{\{\tau_{i+1} < \infty \}} = 1$, namely round $i$ is successful, we need $|\mathcal I_i^S| \ge 1$.  Slightly abusing notations, let the (random) infected leaves at $\tau_i^S$ be $v_1, \dots, v_{|\mathcal I_i^S|}$. Starting from $\tau_i^S$, for each $j$, let $\theta'_j \sim \Exp(\lambda)$ be the time that $v_j$ sends an infection to the root, and $\xi'_j \sim \Exp(1)$ be the recovery time of $v_j$. We say that $v_j$ is \textit{good} if $\theta'_j < \xi'_j$, i.e. the leaf succeeds in infecting the root before it recovers, and $v_j$ is \textit{bad} otherwise. Let $V_g \le |\mathcal I_i^S|$ be the number of good leaves, then for round $i$ to succeed, there must be at least one good leaf, i.e. $V_g \ge 1$.

For a leaf $v_j$ above, if we know that $v_j$ is good, then we can condition on $\theta'_j < \xi'_j$; since $\theta'_j \sim \Exp(\lambda)$ and $\xi'_j \sim \Exp(1)$ are independent, we have that $\theta'_j \vert_{\{\theta'_j < \xi'_j\}} \sim \Exp(\lambda + 1)$. On the event $\{\tau_{i+1} < \infty\}$, conditioned on $\mathcal 
F_{\tau_i^S}$ and $V_g$, we have 
$$ \tau_{i+1} - \tau_i^S = \min_{j: v_j \text{ is good}} \theta'_j \sim \Exp((\la + 1) V_g). $$
Therefore,
\begin{align*}
	\E\left((\tau_{i+1}-\tau_i^S)1_{\{\tau_{i+1} < \infty\} }\right) &= \E \left( 1_{\{\tau_i < \infty\} } \E \left((\tau_{i+1}-\tau_i^S)1_{\{\tau_{i+1} < \infty\} } \ \bigg\vert\ E_i \right)\right) \\
	&=\E \left( 1_{\{\tau_i < \infty\} } \E\left( 1_{\{1 \le V_g \le |\mathcal I_i^S|\}} \dfrac{1}{(\la + 1) V_g} \ \bigg\vert \ E_i \right) \right )\\
	&\le \dfrac{1}{\la + 1} \E1_{\{\tau_i < \infty \}} \le (1 - c'(\la^2 n)^{-\alpha})^{i-1}.
\end{align*}
Combining all the above gives
\begin{align}
	\sum_{i \ge 1}\E ((\tau_{i+1}-\tau_i)1_{\{\tau_{i+1} < \infty\} })  &\le \left(2 + \dfrac{1}{\alpha} \right)\sum_{i \ge 1}(1 - c'(\la^2 n)^{-\alpha})^{i-1} \lesssim (\la^2 n)^{\alpha}. \label{eq:BoundExpectFirstTermY}
\end{align}
{\underline{\bf Length of a failed round.}} 	For each term in the second summation of \eqref{eq:ExpOfTauOverall}, corresponding to the failed round, by the same computations above, it is at most
\begin{equation}\label{eq:fail}
	\left(1 + \dfrac{1}{\alpha}\right) \left(1 - c'(\la^2 n)^{-\alpha}  \right)^{i-1} + \E \left(  1_{\{\tau_i < \infty\}}1_{\{\tau_{i+1} = \infty\}} \E\left( 1_{\{\tau_i^S < \tau\}}(\tau - \tau_i^S) \ \bigg\vert\ \tau_i < \infty, \tau_{i+1} = \infty \right) \right).
\end{equation}
The second term now inevitably involves a $\log n$ term. 
\begin{lemma}\label{lm:controlResidualY}
	The length of survival after the root becomes susceptible again is of order $O(\log n)$, namely
	$$ \E\left( 1_{\{\tau > \tau_i^S\}}(\tau - \tau_i^S)\ \bigg\vert\ \tau_i < \infty, \tau_{i+1} = \infty \right) \le 2 \log n.$$
\end{lemma}
A detailed proof, taking the conditional event into account, can be found in Appendix \ref{app:lm:control}.
Applying this lemma \ref{lm:controlResidualY} to \eqref{eq:fail} and take the sum over $i$, the second sum in \eqref{eq:ExpOfTauOverall} is at most
\begin{equation}\label{eq:BoundExpectSecondTermY}
	\sum_{i \ge 1} \left (\left(1 + \dfrac{1}{\alpha}\right) \left(1 - c'(\la^2 n)^{-\alpha}  \right)^{i-1} + 2 (\log n) \E 1_{\{\tau_i < \infty\}}1_{\{\tau_{i+1} = \infty\}}\right )\lesssim_{\alpha} (\la^2 n)^{\alpha} + \log n.
\end{equation}
Combining \eqref{eq:ExpOfTauOverall}, \eqref{eq:BoundExpectFirstTermY}, and \eqref{eq:BoundExpectSecondTermY} gives the desired estimate, concluding the proof for the upper bound in Theorem \ref{thm:mainthm}.
\begin{remark}[Upper bound for $\la \ge 1/2$]\label{rem:uppBoundWhenLaGe12}
	In this case, we can prove that the lower bound of the conclusion in Lemma \ref{lm:goodRoundBdedTrans} becomes $C = C(\alpha, \varepsilon)$ instead of $C\la$. The proof involves routine but tedious calculations, so we will not include it here.
	Similar comment applies for Corollary \ref{cor:goodRoundBdedTrans} and Lemma \ref{lm:root:inf}. Finally, Lemma \ref{lm:controlResidualY} make no assumption on the range of $\la$. Together with Lemma \ref{lm:orderfail_LambdaConst}, we can easily adapt the argument above to the upper bound of $\la \ge 1/2$.
\end{remark}

\begin{remark}[Bounds for $\la <  C_0/\sqrt{n}$]\label{rem:lowboundLoglambdan}
	The upper bound above easily extends to $\la <  C_0/\sqrt{n}$ as well. Moreover, the lower bound follows directly from the above: the expected survival time is always at least $1$ (recovery of the root), and we only need that for $\la \ge e/n$, with probability $\Theta_{\alpha}(1)$, $$|\mathcal I_i^R| \gtrsim_{\alpha} \la n. $$
	We see that w.p.$\Theta_{\alpha}(1)$, $\tau_1^R \ge 1/2$, and thus by Corollary \ref{cor:goodRoundBdedTrans}, $|\mathcal I_1^R| \succeq \text{Bin}(n, 2C\la) =: Z$ for some $C = C(\alpha) > 0$. By Paley-Zygmund inequality,
	$$\Pr(|\mathcal I_1^R| \ge C\la n) \ge \Pr(Z \ge \E Z/2) \ge \dfrac{1}{4}\cdot \dfrac{(\E Z)^2}{\E Z^2} = \dfrac{2 C \la n}{4(1 - 2C\la + 2C \la n) } = \Theta_{\alpha}(1), $$
	completing the case $\la <  C_0/\sqrt{n}$. Note that we can take care of the $\log n$ term in the lower bound of $ C_0/\sqrt{n}$ in the same manner.
\end{remark}
	
	\section{Proof of the lower bounds: main ideas and useful lemmas}\label{sec:ProofLowBdAlphaGe1}
\subsection{General strategy}\label{subsec:genStratAlphaGe1}
We will show that the expected number of successful rounds of the processes is of order at least $\approx (\la^2 n)^{\alpha}$, which is stated in the following lemma.
\begin{lemma}\label{lm:LowerBdNumberOfRoundsX}
	Consider the SIRS process $(X_t)$, let $\Psi_X$ be the number of successful rounds of $(X_t)$. Then there exist constants $C_1, N, C_0 > 0$ only dependent on $\alpha$ such that for all $ n \ge N$ and $\la \ge  \frac{C_0}{\sqrt{n}}$, $$\E \Psi_X \ge C_1(\la^2 n)^{\alpha}.$$
	The same bound holds for the process $(Y_t)$.
\end{lemma}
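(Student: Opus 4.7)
The plan is to prove the per-round bound
\begin{equation*}
\P\bigl(\tau_{i+1}=\infty \,\big|\, \tau_i<\infty\bigr) \le C_\alpha (\lambda^2 n)^{-\alpha}\qquad\text{for all }i\ge 1,
\end{equation*}
after which iterating the tower property gives $\P(\tau_{k+1}<\infty)\ge (1-C_\alpha(\lambda^2 n)^{-\alpha})^k$, and summing the resulting geometric series yields
\[
\E\Psi_X = \sum_{k\ge 1}\P(\tau_{k+1}<\infty) \gtrsim_\alpha (\lambda^2 n)^\alpha,
\]
which is the claimed bound (for $n$ large enough that $C_\alpha(\lambda^2 n)^{-\alpha}\le 1/2$).

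For the per-round bound, I would start from Lemma~\ref{lm:orderfail} to decompose
\[
\P(\tau_{i+1}=\infty\mid\tau_i<\infty) \lesssim_\alpha \P(|\mathcal I_i^R|=0\mid\tau_i<\infty) + \lambda^{-\alpha}\E\bigl[|\mathcal I_i^R|^{-\alpha}\mathbf{1}_{|\mathcal I_i^R|\ge 1}\,\big|\,\tau_i<\infty\bigr],
\]
then condition on $\sigma(\mathcal F_{\tau_i},\xi_i)$. Given $\xi_i=x$ and the state at $\tau_i$, the leaves evolve independently during $[\tau_i,\tau_i+x]$, and a direct computation of the per-leaf transition probabilities (sharpening Lemma~\ref{lm:goodRoundBdedTrans} by tracking the small-$x$ asymptotics) shows that every leaf---regardless of initial state---is in state $I$ at $\tau_i^R$ with probability at least $c\lambda\min(x,1)$ for a universal $c>0$. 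Hence $|\mathcal I_i^R|\succeq\mathrm{Bin}(n,c\lambda\min(x,1))$; setting $\mu_x := cn\lambda\min(x,1)$, Chernoff and elementary binomial bounds yield $\P(|\mathcal I_i^R|=0\mid\xi_i=x)\le e^{-\mu_x}$ and $\E[|\mathcal I_i^R|^{-\alpha}\mathbf{1}_{\ge 1}\mid\xi_i=x]\lesssim_\alpha \min(\mu_x,\mu_x^{-\alpha})$.

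Integrating against the $\Exp(1)$ density of $\xi_i$ and splitting the domain at $x_0:=1/(cn\lambda)$ and at $x=1$ gives
\[
\E\bigl[|\mathcal I_i^R|^{-\alpha}\mathbf{1}_{\ge 1}\bigr] \le \int_0^{x_0}\!cn\lambda x\,e^{-x}dx + (cn\lambda)^{-\alpha}\!\int_{x_0}^{1}\!x^{-\alpha}e^{-x}dx + (cn\lambda)^{-\alpha}\!\int_1^\infty\!e^{-x}dx \lesssim_\alpha \frac{1}{n\lambda} + (n\lambda)^{-\alpha},
\]
where the key middle integral is finite precisely because $\int_0^1 x^{-\alpha}dx=1/(1-\alpha)<\infty$ for $\alpha<1$. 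A parallel computation gives $\P(|\mathcal I_i^R|=0\mid\tau_i<\infty)\lesssim 1/(n\lambda)$. Using $1/(n\lambda)\lesssim_\alpha (n\lambda^2)^{-\alpha}$ (straightforward for $\alpha<1$ in the assumed range of $\lambda$), both contributions combine to give $\lesssim_\alpha (\lambda^2 n)^{-\alpha}$, which proves the per-round bound.

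The main obstacle is the divergence of $\int_0^1 x^{-\alpha}dx$ for $\alpha\ge 1$, which is exactly why this direct integration approach breaks down and the paper has to develop the chunk-based argument of Section~\ref{sec:ProofLowBdAlphaGe1}; very short rounds (with $\xi_i$ of order $(\lambda^2 n)^{-\alpha'}$ for $\alpha'<\alpha$) carry too much mass when $\alpha \ge 1$. A secondary technical issue, present only for $(X_t)$, is that $p_{R\to I}(x)\sim\alpha\lambda x^2$ for small $x$ is weaker than $\lambda x$; I would handle this by restricting the stochastic lower bound on $|\mathcal I_i^R|$ to the non-$R$ leaves and separately controlling $\E|\mathcal R_i|$ in the spirit of Lemma~\ref{lm:root:inf}, so that on the typical event where $|\mathcal R_i|\le n/2$ the integration argument goes through unchanged.
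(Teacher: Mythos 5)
Your overall architecture (a per-round failure bound of order $(\lambda^2 n)^{-\alpha}$, obtained by integrating over $\xi_i$ and exploiting that $\int_0^1 x^{-\alpha}\,dx<\infty$ precisely when $\alpha<1$, followed by summing a geometric series) is the same mechanism the paper uses: your $x^{-\alpha}$ integral plays the role of the paper's bound $\E\,p^{-\alpha}<\infty$ with $p=(e^{-\lambda\xi_i}-e^{-\xi_i})/(1-\lambda)$ (Appendix~\ref{app:p:alpha}), and for the process $(Y_t)$, where leaves are never immune, your per-leaf lower bound $c\lambda\min(x,1)$ is valid for every leaf and the argument goes through cleanly. However, the lemma's real content is for $(X_t)$ (the paper deduces the $Y$ case from the $X$ case via the coupling of Lemma~\ref{lm:couplingLemma}), and there your proposal has a genuine gap at exactly the point you flag as ``secondary.'' The uniform claim that every leaf is infected at $\tau_i^R$ with probability $\ge c\lambda\min(x,1)$ fails for immune leaves ($p_{X,R\mapsto I}(x)\asymp\alpha\lambda x^2$ for small $x$), and your proposed patch—bounding $\E|\mathcal R_i|$ in the spirit of Lemma~\ref{lm:root:inf} and working on the event $\{|\mathcal R_i|\le n/2\}$—is quantitatively insufficient: an expectation bound of order $\lambda n$ plus Markov only gives $\P(|\mathcal R_i|>n/2\mid\tau_i<\infty)\lesssim\lambda$, and $\lambda$ is in general much larger than the required error $(\lambda^2n)^{-\alpha}$ (e.g.\ $\alpha$ close to $1$ and $\lambda\gg n^{-\alpha/(1+2\alpha)}$). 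What is actually needed is exponential-type concentration of the configuration, which is the content of the paper's Proposition~\ref{prop:S} and Lemma~\ref{lm:numSdivNisAlwaysPosSustainedSIRS} (with overwhelming probability $|S_t|\ge Bn$ for all relevant times); your sketch supplies no substitute for this.

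There is a second, related gap: your per-round bound is conditional on $\{\tau_i<\infty\}$ and is asserted ``for all $i\ge1$.'' Any typicality statement about the configuration at $\tau_i$ (such as $|\mathcal R_i|\le n/2$, or $|\mathcal S_i|\ge Bn$) must be transferred to the conditional law given survival, which costs a factor $1/\P(\tau_i<\infty)$—and lower-bounding $\P(\tau_i<\infty)$ is precisely what the lemma is trying to establish, so a naive transfer is circular. The paper resolves this by a bootstrapping strong induction restricted to $i\le c_0(\lambda^2n)^{\alpha}$: it maintains $\P(\tau_i<\infty)\ge 1/(2e)$ along the induction (using Lemma~\ref{lm:reverseJensen1} to convert the unconditional moment bounds on $|\mathcal S_i|$ into conditional ones), and then concludes via $\E\Psi_X\ge c_0(\lambda^2n)^{\alpha}\P(\tau_{c_0(\lambda^2n)^{\alpha}}<\infty)$, which suffices for the stated bound. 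Your proposal does not address this conditioning issue at all, so even granting strong concentration for $|\mathcal R_i|$, the claim ``for all $i\ge1$'' is not justified as written; you would either need the paper's induction scheme (or an analogous crude a priori lower bound on $\P(\tau_i<\infty)$ valid up to $i\approx(\lambda^2n)^{\alpha}$) to close the argument for $(X_t)$.
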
 

Lemma \ref{lm:GeneralizedWald}, Lemma \ref{lm:LowerBdNumberOfRoundsX}, and Remark \ref{rem:lowboundLoglambdan} together imply the needed lower bound.
\begin{proof}[Proof of lower bound for Theorems \ref{thm:mainthm} and \ref{thm:mainthmY}]
	Omitting the subscript $X$, then from the upper bound, we know that 
	\begin{align*}
		\E\left( \sum_{i=1}^{\Psi} (\tau_i^S - \tau_i) \right) \le \E\tau < \infty, 
	\end{align*}
	i.e. $\sum_{i=1}^{\Psi} (\tau_i^S - \tau_i)$ has finite expectation. Consider the filtration $\{\mathcal F'_i\}_{i \in \Z_{\ge 0}} = \{\mathcal F_{\tau_{i+1}}\}_{i \in \Z_{\ge 0}}$ (notice the shift of indices), then $\Psi$ is a stopping time with respect to this filtration. Moreover, it holds that 
	$$ \E\left( (\tau_i^S - \tau_i) \ \bigg\vert \ \mathcal F'_{i-1} \right) =  \E\left( \xi_i + \zeta_i \ \bigg\vert \ \mathcal F_{\tau_i} \right) = 1 + \dfrac{1}{\alpha},$$
	regardless of whether $\tau_i < \infty$ or not due to the way that we define $\xi_i$ and $\zeta_i$. Applying Lemma \ref{lm:GeneralizedWald}, 
	$$\E\tau \ge \E\left( \sum_{i=1}^{\Psi} (\tau_i^S - \tau_i) \right) = \E\left( \sum_{i=1}^{\Psi} \E\left( \tau_i^S - \tau_i \ \bigg\vert\ \mathcal F'_{i-1} \right) \right) = \left( 1 + \dfrac{1}{\alpha}\right) \E\Psi,$$
	and the conclusion follows from Lemma \ref{lm:LowerBdNumberOfRoundsX} and Remark \ref{rem:lowboundLoglambdan} by taking the maximum.
\end{proof}
As the main difficulty to obtain the desired bound for $\alpha>1$ occurs when the root is only infected for a tiny amount of time, we divide the collection of rounds into "good" and "bad" rounds. We call the $i$-th round $\varepsilon$\textit{-good} if $\xi_i > \varepsilon$, and $\varepsilon$\textit{-bad} otherwise. Moreover, by Lemma \ref{lm:couplingLemma}, we only need to prove it for $(X_t)$. Therefore, in what follows, we will omit the subscript $X$.

\begin{remark}\label{rem:SurvivalProbOfGoodRound}
	Lemma \ref{lm:orderfail} and Corollary \ref{cor:goodRoundBdedTrans} together directly implies that the probability of a good round failing is $\lesssim_{\alpha, \varepsilon} (\la^2 n)^{-\alpha}$.
\end{remark}

In essence, our strategy is as follows. We will fix $\varepsilon_0 = \varepsilon_0(\alpha) > 0$ (to be chosen later), and choose parameters $\varepsilon \in (0, 1), K \ge 1$ accordingly (Lemma \ref{lm:choiceOfParamTechnical}). Let $1 \le m_1 < m_2 < \dots$ be all the (random) indices such that the $m_i$-th round is $\varepsilon$-good, i.e. $\xi_{m_i} \ge \varepsilon$. Instead of attempting to work with $\Pr(\tau_{i+1} = \infty \mid \tau_i < \infty)$
as previously, we will show that
$$ \Pr\left( \tau_{m_{i+1}} = \infty \ \bigg\vert \ \tau_{m_i} < \infty \right) \lesssim_{\alpha} (\la^2 n)^{-\alpha},$$
for all $i \le \Psi_0 = \Theta_{\alpha}((\la^2 n)^{\alpha})$. In other words, we look at \textit{chunks} of bad rounds instead of each round individually. Since $m_i \ge i$ for all $i$, this directly gives us Lemma \ref{lm:LowerBdNumberOfRoundsX}. All left is the analysis below.

Denote $M_0 := \left\lceil \frac{K \alpha \log (\la^2 n)}{\log(1/\varepsilon)} \right\rceil$ and $M_1 := \left\lceil \frac{\varepsilon_0 \alpha \log (\la^2 n)}{\log(1/\varepsilon)} \right\rceil$, then we consider the following cases for chunk length $(m_{i+1} - m_i)$.
\begin{itemize}
	\item[(i)] $m_{i+1} - m_i > M_0$. This means we have at least $M_0$ consecutive, i.i.d. $\Exp(1)$ clocks of length at most $\varepsilon$, which happens with probability 
	$$ (1 - e^{-\varepsilon})^{M_0} \le \varepsilon^{M_0} \lesssim_{\alpha} (\la^2 n)^{-\alpha}. $$
	
	\item[(ii)] $M_1 < m_{i+1} - m_i \le M_0$. In Section \ref{sec:ProofSubOptBound}, we will prove a suboptimal bound, which essentially states that condition on $M$ consecutive bad rounds, where $M \le M_0$, then the probability of failing is at most $\Theta_{\alpha}\left((\la^2 n)^{-\alpha(1 - \varepsilon_0)}\right)$ (Lemma \ref{lm:SurvivalProbOfMConsecRound}). Thus, 
	\begin{align*}
		&\Pr\left( \tau_{m_{i+1}} = \infty, M_1 < m_{i+1} - m_i \le M_0 \ \bigg\vert\ \ \tau_{m_i} < \infty \right) \\
		=&\ \sum_{M = M_1 + 1}^{M_0} \Pr(m_{i+1} - m_i = M) \Pr\left( \tau_{m_{i+1}} = \infty \ \bigg\vert\  m_{i+1} - m_i = M, \tau_{m_i} < \infty \right) \\
		\lesssim_{\alpha}&\ \sum_{M = M_1 + 1}^{M_0} \ \varepsilon^M (\la^2 n)^{-\alpha(1 - \varepsilon_0)} \lesssim_{\alpha} (\la^2 n)^{-\alpha}.
	\end{align*}
	
	\item[(iii)] $m_{i+1} - m_i \le M_1$. This is the most complicated case. A key ingredient is the \textit{sustained SIRS process}, which is introduced in this section. More intuitions will be presented in Section \ref{sec:shortRunAnalysis}.
\end{itemize}

\subsection{Number of non-immune leaves cannot be too large}
In the following lemma, we will show that with overwhelmingly high probability, for exponentially long time, the number of non-immune leaves in the SIRS process is always large enough. For the rest of the paper, we abuse notations and view $S_t, I_t, R_t$ as the set of susceptible, infected, and immune \textit{leaves} respectively (so those sets do not include the root $\rho$).
\begin{proposition}\label{prop:S}
	With overwhelmingly high probability, at all times $t \ge 0$, the number of non-immune leaves of the SIRS process is of order $\Theta(n)$. More precisely, set $B:= \min\left(\frac{\alpha}{16(\alpha + 1)^2}, \frac{e^{-\alpha}}{8} \right) \in \left(0, \frac{1}{64}\right)$, then for some constant $C > 0$ only dependent on $\alpha$, we have
	$$ \P\left( |S_t| + |\mathcal I_t| \ge Bn, \ \forall t \ge 0 \right) \ge 1 - Ce^{-4B^2n}. $$
\end{proposition}

Proposition \ref{prop:S} follows directly from a more technical statement which we will need later.
\begin{lemma}\label{lm:numSdivNisAlwaysPosSustainedSIRS}
	Given any $b \in (0, 1]$, for all stopping time $T$ with respect to $(\mathcal F_t)_{t \ge 0}$, the following holds: conditioned on $\mathcal F_T$, on the event $\{|R_T| \le (1 - b)n\}$, with overwhelmingly high probability, at all times $t \le \exp(\Theta_{\alpha, b}(n))$, the number of non-immune leaves of $(X_t)$ is of order $\Theta_{\alpha, b}(n)$. More precisely, setting $B:= \min\left(\frac{\alpha}{16(\alpha + 1)^2}, \frac{e^{-\alpha}}{8} \right) \in \left(0, \frac{1}{64}\right)$, then 
	\begin{align*}
		&\P\left(|R_{T + t}| \le (1 - Bb)n, \ \forall 0 \le t \le e^{4B^2 b n} \ \bigg\vert\ \mathcal F_T, |R_T | \le ( 1- b)n \right) \\
		\ge&\ 1_{\{|\mathcal R_T| \le (1 - b)n\}}\left(1 - 2e^{-4B^2b n}\right). 
	\end{align*}
\end{lemma}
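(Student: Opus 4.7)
The plan is to reduce to the case $T=0$ via the strong Markov property at the stopping time $T$: conditional on $\mathcal{F}_T$ with $|S_T|\ge bn$, the SIRS continues as a fresh process with at least $bn$ initially susceptible leaves. So it suffices to show that for the SIRS started with $|S_0|\ge bn$, we have $|S_t|\ge Bbn$ for all $t\in[0,e^{4B^2 n}]$ except with probability $e^{-(2B-4B^2)bn}+e^{-4B^2 n}$.

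The main step is to stochastically dominate each leaf's evolution by an independent three-state Markov chain on $\{S,I,R\}$ with rates $\lambda, 1, \alpha$ — that is, I consider an auxiliary process $(\tilde X_t)$ in which every $\mathfrak{H}_{\rho v_i}$-arrow acts on the leaf regardless of the root's current state (as if the root were permanently infected). A monotone graphical coupling gives $|S_t|\ge |\tilde S_t|$ uniformly in $t$, so it suffices to prove the lower bound for $(\tilde X_t)$, in which the $n$ leaves evolve as independent Markov chains. In this dominating process, each leaf started in $S$ is in $S$ at time $t$ independently with probability $q(t)=P_{S\to S}(t)$, which converges to $\pi_S=\alpha/(\alpha+\lambda(\alpha+1))$; for $\lambda<e^{-2}$, $\pi_S\ge \alpha/(2(\alpha+1))$.

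I would then establish a uniform-in-$t$ lower bound $q(t)\ge 2B$ for all $t\ge 0$ by splitting into two regimes. For $t$ below the relaxation time $t_0$ of the chain, the crude bound $q(t)\ge e^{-\lambda t_0}$ (the chance that the leaf has not yet left $S$) is bounded away from $0$. For $t>t_0$, a direct spectral decomposition of the explicit $3\times 3$ generator — whose nonzero eigenvalues are the roots of $x^2+(\lambda+\alpha+1)x+(\lambda(\alpha+1)+\alpha)$, with negative real parts of size at least $(\alpha+1+\lambda)/2$ — yields $|q(t)-\pi_S|\le e^{-c_\alpha t}$ and hence $q(t)\ge \pi_S/2$. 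The quadratic denominator in $B=\alpha/(8(\alpha+2)^2)$ is calibrated to absorb the shrinking spectral gap as $\alpha\to 0$, so that $2B$ lies uniformly below both of these bounds. Given this, at any fixed $t$ the count of initially-$S$ leaves still (or again) in $S$ in the dominating process is a sum of at least $bn$ independent Bernoullis of mean $\ge 2B$, so a Chernoff bound gives $\mathbb{P}(|\tilde S_t|<Bbn)\le e^{-(2B-4B^2)bn}$.

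To promote the pointwise estimate to a uniform bound over $[0,e^{4B^2 n}]$, I would place a grid of spacing $\Delta$ on this interval, with $\Delta$ chosen so that the Poisson-$(\lambda n\Delta)$ count of $S\to I$ transitions in any sub-interval is at most $Bbn/2$ except with probability $e^{-8B^2 n}$ (a standard Poisson concentration bound; a polynomially small $\Delta$ suffices). At each of the $\lesssim e^{4B^2 n}/\Delta$ grid points the Chernoff bound applies, and within a sub-interval $|S_t|$ cannot drop by more than $Bbn/2$. A union bound then yields the stated failure probability $e^{-(2B-4B^2)bn}+e^{-4B^2 n}$. The main obstacle is the uniform lower bound $q(t)\ge 2B$: the $3\times 3$ chain has complex eigenvalues in part of the parameter range, its spectral gap degenerates as $\alpha\to 0$, and both the short-time and long-time pieces must be made quantitative with constants that reconcile exactly with $B=\alpha/(8(\alpha+2)^2)$ and the horizon $e^{4B^2 n}$.
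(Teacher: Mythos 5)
Your reduction to $T=0$ and the Chernoff/grid bookkeeping are reasonable in outline, but the load-bearing step --- the ``monotone graphical coupling'' giving $|S_t|\ge|\tilde S_t|$ when every arrow $\mathfrak{H}_{\rho v}$ is allowed to act as if the root were permanently infected --- is false, because SIRS is not monotone. Concretely, fix one leaf $v$ with clock events: an arrow at $t_1$, a recovery at $t_2$, an arrow at $t_3$, a deimmunization at $t_4$, and suppose in the true process the root is not infected at $t_1$ but is infected at $t_3$ (it may have been reinfected by another leaf in between). In the true process $v$ stays susceptible through $t_1,t_2$, is infected at $t_3$, and is still infected after $t_4$; in your auxiliary process $v$ is infected at $t_1$, recovers at $t_2$, is immune at $t_3$ (the arrow does nothing), and becomes susceptible at $t_4$. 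So at times after $t_4$ the auxiliary leaf is in $S$ while the true leaf is not, and there is no compensating mechanism for other leaves: the inequality $|S_t|\ge|\tilde S_t|$ simply does not hold. Increasing the infection pressure can \emph{increase} susceptibility at a later time (a leaf pushed around the $S\to I\to R\to S$ cycle earlier re-enters $S$ when the true leaf is stuck in $I$ or $R$), so neither direction of domination is available, and with it the whole reduction to $n$ independent three-state chains collapses. The spectral analysis of the $3\times3$ generator, whatever its constants, is then lower-bounding the wrong process.

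The paper's proof avoids any comparison between processes. For each unit time interval and each leaf it looks only at that leaf's own clocks: merging $\mathfrak{Q}_v,\mathfrak{D}_v,\mathfrak{H}_{\rho v}$ into one $\PPP(1+\alpha+\la)$, the event that this merged clock has at least two points in the interval, with the last point a deimmunization and the second-to-last a recovery, forces the leaf to be susceptible at the interval's right endpoint \emph{regardless} of the leaf's state at the left endpoint and regardless of the root's trajectory (no infection arrow can occur after that recovery point). These events are independent across leaves, each with probability at least $4B$, which yields the $\mathrm{Bin}(n,4B)$ domination at integer times directly; the fluctuation inside each unit interval is controlled by noting that dropping from $2Bbn$ to below $Bbn$ susceptibles requires at least $Bbn$ infection clocks of rate $\la$ to ring in one unit of time. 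If you want to salvage your approach, you need a statement of this unconditional, state-independent type rather than a stochastic domination. As a secondary point, even granting a per-time bound, your union bound over $\approx e^{4B^2n}/\Delta$ grid points multiplies the per-point failure probability $e^{-(2B-4B^2)bn}$ by a factor $e^{4B^2 n}/\Delta$, which does not give the stated bound when $b$ is small; the accounting has to be done per unit interval, conditionally and sequentially as in the paper, rather than by a raw union bound at the end.
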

Note that later, we will use this lemma with the process $(X_t)$ being replaced by the so-called sustained process $(\tilde{X}_t)$. The lemma's proof works for the latter as well. 

\begin{proof}[Proof of Lemma \ref{lm:numSdivNisAlwaysPosSustainedSIRS}]
	The idea is as follows.
	\begin{itemize}
		\item We first "discretize" time, i.e. proving that with overwhelmingly high probability, at all times $T + i$, with $i \in \mathbb{Z}_{\ge 0}$, $i \ge \exp(\Theta(n))$ (to be determined later), we have $|R_{T+i}| \le (1 - 2Bb)n$.
		\item We then show that conditioned on $|R_{T+i}| \le (1 - 2Bb)n$, with overwhelmingly high probability, $|R_{T + i + u}| \le (1 - Bb)n$ for all $u \in (0, 1)$.
	\end{itemize} 
	We will execute the steps above. Obviously, $|R_T| \le (1 - b)n < (1 - Bb)n$. Now, for $i \in \mathbb{Z}_{\ge 0}$, conditioned on $\mathcal F_{T + i}$, fixing a vertex $v$, and consider 
	$$ \mathfrak{R}^c_v := \mathfrak{Q}_v + \mathfrak{D}_v \sim \PPP(1 + \alpha).  \quad \footnote{$\mathfrak R$ is Fraktur R.}$$
	Conditioned on $\mathcal F_{T + i}$, we see that $\{\mathfrak{R}^c_{v_j}\}_{1 \le j \le n}$ are mutually independent. Let $E_j$ be the event that $\mathfrak{R}^c_{v_j}$ has at least two events on $[T + i, T+i+1]$, and that the last event originally belongs to $\mathfrak{D}$ (losing immunity), the second-to-last event originally belongs to $\mathfrak{Q}$ (recover). Observe that if $E_j$ happens, then $R_{i+1}(v_j) = 0$, no matter how $\mathfrak{H}_{\rho v_j}$ behaves. Thus,
	\begin{align*}
		\P(E_j \ \vert\ \mathcal F_{T+i}) &= \underbrace{\left( 1 - e^{-(1 + \alpha)}(2 + \alpha) \right)}_{\ge 1/4} \left(\dfrac{\alpha}{1 + \alpha}\right)\left(\dfrac{1}{1 + \alpha}\right) \ge \dfrac{\alpha}{4(\alpha + 1)^2} = 4B.
	\end{align*}
	Let ${R}^*_{T+i+1} := \sum_{j = 1}^n 1_{E^c_i}$, then conditioned on $\mathcal F_{T + i}$,$$ |R_{T+i+1}| \le {R}^*_{T+i+1} \preceq \text{Bin}(n, 1 - 4B). $$
	By a Chernoff bound, 
	\begin{equation}\label{eq:TimeIplusOneDiscrete}
		\P\left( |R_{T+i+1}| \ge (1 - 2Bb)n \ \bigg\vert\ \mathcal F_{T+i}\right) \le \P\left( |R_{T+i+1}| \ge (1 - 2B)n \ \bigg\vert\ \mathcal F_{T+i} \right) \le \exp\left( - 8B^2 n \right).
	\end{equation}
	
	We will next show that conditioned on $\mathcal F_{T + i}$, on the event $\{|R_{T + i}| \le (1 - 2Bb)n \}$, with overwhelmingly high probabillity, $\sup_{u \in (0, 1)} |R_{T+i+u}| \le (1 - Bb)n$. If $\sup_{u \in (0, 1)} |R_{T+i+u}| > (1 - Bb)n$, it means that there exists some $u_* \in (0, 1)$ such that $|R_{i + u_*}| > (1 - Bb)n$. This means that conditioned on $\mathcal F_{T+i}$, at least $Bbn$ of the deimmunization clocks $\mathfrak{D}_v$, with $v \in R_{T+i}$, contain at least one event in $[i, i+1]$. These clocks are independent, therefore
	\begin{align}
		&\P\left(\sup_{u \in (0, 1)} |R_{T+i+u}| > (1 - Bb)n \ \bigg\vert\ \mathcal F_{T+i}, R_{T+i} \le (1 - 2Bb)n \right) \nonumber \\
		\le&\ 1_{\{|R_{T+i}| \le (1 - 2Bb)n\}} (1 - e^{-\alpha})^{Bbn} \le 1_{\{|R_{T+i}| \le (1 - 2Bb)n\}}  \exp\left(-8B^2bn\right). \label{eq:TimeIplusOneCont}
	\end{align}
	Combining \eqref{eq:TimeIplusOneDiscrete} and \eqref{eq:TimeIplusOneCont} gives (and for clarity of presentation, we treat $L := e^{4B^2 b n}$ as an integer)
	\begin{align*}
		&\P\left(|R_{T+t}| \ge (1 - Bb)n, \ \forall 0 \le t \le e^{4B^2bn} \ \bigg\vert\ \mathcal F_T, |R_T| \le (1 - b)n \right) \\
		\ge&\ \P\left(|R_{T+t}| \ge (1 - Bb)n, \ \forall 0 \le t \le L, |S_{T+i}| \ge (1 - 2Bb)n, \ \forall i \in \mathbb{Z} \cap [0, L]\ \bigg\vert\ \mathcal F_T, |R_T| \le (1-b)n \right) \\
		\ge&\ 1_{\{ |R_T| \le (1-b)n \}}\prod_{j = 0}^{L}\left( 1 - 2e^{-8B^2 b n} \right),
	\end{align*}
	which, by Bernoulli's inequality, is at least  $1 -2 e^{-4B^2 b n}$,	completing the proof.
\end{proof}

\subsection{The sustained SIRS process on star graphs}
In this section, we will introduce a \textit{sustained} SIRS process $\tilde{X} = (\tilde{X}_t)$ on star graphs as follows. We will equip the PPPs on the vertices and edges in the same time as in the SIRS process. The process evolves in the same way as in the SIRS process, with an additional rule: at the survival time $\tau$ (when no vertices are infected),
\begin{itemize}
	\item if the root is immune, then we infect the root at the moment it becomes susceptible again;
	\item otherwise, we infect the root immediately.
\end{itemize}

In this sustained SIRS process, the infection never dies out. Moreover, using a graphical representation, we can couple the original SIRS process $(X_t)$ and the sustained SIRS process $(\tilde{X}_t)$ by using the same clocks for both processes. Then $\tilde{X}$ evolves in the exact same manner as $X$ up until the survival time $\tau_X$ of $X$.

Note that the notion of round still holds in this newly defined process. However, a successful/ failed round is defined differently from the original SIRS process. We say that a round of the sustained SIRS process is \textit{successful} if the root is reinfected by a leaf, and \textit{failed} if all leaves recover before the root is reinfected, i.e. the root is artificially reinfected. 

We use similar notations as those for the SIRS process, but with $\verb|tilde|$ on top of it. For example, we denote $\tilde{\tau}_i$ the time that round $i$ starts, and $\tilde{\tau}_i^R, \tilde{\tau}_i^S, \tilde{\mathcal S}_i, \tilde{\mathcal R}_i, \tilde{\mathcal I}_i$ similarly. However, we still use the same notation $\xi_i$ and $\zeta_i$ for the the time it takes for the root to recover and lose immunity in round $i$. This is because in the subsequent application of this sustained SIRS process, we will always view it as being coupled with the usual SIRS process by using the same clocks (on the graphical representation). For now, this newly defined process still seems mysterious; however, the purpose of using this process will be made clearer in the following section.
	
		\section{Proof of the lower bounds: A suboptimal bound for runs of bad rounds}\label{sec:ProofSubOptBound}
	Here, we aim to provide a sub-optimal bound first, to show that a chunk of bad rounds, with a good starting point, fails with probability at most $(\la^{2}n)^{-\alpha+o(1)}$. We start with a technical lemma, which aids in our choice of parameters. A proof for this lemma is deferred to Appendix \ref{sec:proofOfChoiceOfParamTechnical}.
	
	\begin{lemma}\label{lm:choiceOfParamTechnical}
		For all fixed $\varepsilon_0 \in \left(0, \frac{1}{100(2\alpha + 1)}\right)$, we can choose parameters $K \ge 1, \varepsilon \in (0, 1/8), t > 0$ all dependent on $\varepsilon_0, \alpha$ such that setting $T := t - \log (1 + t)$, the following holds.
		$$\dfrac{TK}{\log(1 / \varepsilon)} \ge 1 - \varepsilon_0, \qquad \dfrac{tK}{\log(1 / \varepsilon)} \le 1 - \dfrac{\varepsilon_0^2}{2}, \qquad \dfrac{K\alpha(\varepsilon + 1 + 1/\alpha)}{\log(1 / \varepsilon)} \le \dfrac{\varepsilon_0^2}{4}.$$
	\end{lemma}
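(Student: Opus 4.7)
The plan is to choose the three parameters in the order $t$, then $\varepsilon$, then $K$, exploiting the fact that $T/t = 1 - \log(1+t)/t \to 1$ as $t \to \infty$. First I would pick $t = t(\alpha, \varepsilon_0)$ large enough to enforce two things at once: (i) $\log(1+t)/t \le \varepsilon_0/2$, which yields $T/t \ge 1 - \varepsilon_0/2$; and (ii) $t \ge 4(2\alpha+1)/\varepsilon_0^2$, a numerical lower bound that will drive the third inequality. Both are compatible, since the first holds for all sufficiently large $t$ and the second is just a threshold depending on $\alpha$ and $\varepsilon_0$.

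With $t$ fixed, I would take $\varepsilon \in (0, 1/8)$ small enough that $\log(1/\varepsilon) \ge t/(1-\varepsilon_0^2/2)$, and set
\[
K := \frac{(1-\varepsilon_0^2/2)\log(1/\varepsilon)}{t},
\]
which is $\ge 1$ by construction. The second inequality then holds with equality: $tK/\log(1/\varepsilon) = 1 - \varepsilon_0^2/2$. The first follows from
\[
\frac{TK}{\log(1/\varepsilon)} = \frac{T}{t}\bigl(1 - \tfrac{\varepsilon_0^2}{2}\bigr) \ge \bigl(1 - \tfrac{\varepsilon_0}{2}\bigr)\bigl(1 - \tfrac{\varepsilon_0^2}{2}\bigr) \ge 1 - \varepsilon_0,
\]
a routine expansion valid for $\varepsilon_0 \in (0, 1)$. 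For the third, since $\varepsilon < 1/8$ gives $\alpha(\varepsilon + 1 + 1/\alpha) \le 2\alpha + 1$, we obtain
\[
\frac{K\alpha(\varepsilon + 1 + 1/\alpha)}{\log(1/\varepsilon)} = \frac{(1-\varepsilon_0^2/2)\alpha(\varepsilon + 1 + 1/\alpha)}{t} \le \frac{2\alpha+1}{t} \le \frac{\varepsilon_0^2}{4}
\]
by our choice of $t$.

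I do not anticipate a serious obstacle here: the three constraints decouple cleanly because $\varepsilon$ enters only through the magnitude of $K$ and can be shrunk freely to secure $K \ge 1$, while the delicate gap between $1 - \varepsilon_0$ and $1 - \varepsilon_0^2/2$ is absorbed into the tunable slack $\log(1+t)/t$ by taking $t$ large. The assumption $\varepsilon_0 < 1/(100(2\alpha+1))$ is not actually exploited in the lemma itself and presumably appears only to make downstream estimates quantitative.
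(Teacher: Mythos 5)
Your proposal is correct, and it takes a genuinely different route from the paper's. The paper works in the opposite order: it first fixes the ratio $K' = K/\log(1/\varepsilon) = \varepsilon_0^2/(4(2\alpha+1))$ so that the third constraint holds automatically (using only $\varepsilon<1$), and then must verify that the window for $t$ is nonempty, i.e. that $F\bigl(4(2\alpha+1)(\varepsilon_0^{-2}-\tfrac12)\bigr) > 4(2\alpha+1)(\varepsilon_0^{-2}-\varepsilon_0^{-1})$ for $F(t)=t-\log(1+t)$; that verification is an explicit computation which genuinely invokes the hypothesis $\varepsilon_0 < \tfrac{1}{100(2\alpha+1)}$ (via monotonicity of $x\mapsto 4(2\alpha+1)x-2\log x$ on $[100(2\alpha+1),\infty)$). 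You instead fix $t$ large first (so that $\log(1+t)/t\le\varepsilon_0/2$ and $t\ge 4(2\alpha+1)/\varepsilon_0^2$), then shrink $\varepsilon$, then define $K$ so the second constraint is tight, and the gap between $1-\varepsilon_0$ and $1-\varepsilon_0^2/2$ is absorbed by the slack $T/t\ge 1-\varepsilon_0/2$; all three inequalities and $K\ge1$ then check out as you compute. What your route buys is that the lemma itself only needs $\varepsilon_0\in(0,1)$ — so your closing remark is accurate for your proof, though note the paper's own proof of this lemma does use the stronger smallness of $\varepsilon_0$, which is therefore not purely a downstream convenience in the paper's presentation. What the paper's route buys is an explicit, canonical choice of $K/\log(1/\varepsilon)$ tied directly to the third constraint, which makes the dependence of the parameters on $\alpha,\varepsilon_0$ completely concrete.
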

	
	We will prove that the probability of surviving at most $M_0$ consecutive bad rounds is sufficiently large, which is captured in the main lemma below.
	
	\begin{lemma}\label{lm:SurvivalProbOfMConsecRound}
		For a fixed $\varepsilon_0  \in \left(0, \frac{1}{100(2\alpha + 1)}\right)$, let $K \ge 1, \varepsilon \in (0, 1/8)$ be the parameters introduced in Lemma \ref{lm:choiceOfParamTechnical}, and let $M_0 := \left\lceil \frac{K \alpha \log (\la^2 n)}{\log(1/\varepsilon)} \right\rceil$. Then there exist constants $C_0, c > 0$ only dependent on $\alpha, \varepsilon_0$ such that the following holds: for all $\la \ge C_0/\sqrt{n}$ and $M \le M_0$, starting at the $i$-th round, conditioned on the event that there are at least $C\la n$ infected leaves at $\tau_i^R$ (where $2C = 2C(\alpha, \varepsilon)$ is the constant in Lemma \ref{lm:goodRoundBdedTrans}), and that all the next $M$ rounds are $\varepsilon$-bad, then the probability that the process survives all the next $\varepsilon$-bad rounds is at least $1 - c (\la^2n)^{\alpha(1-\varepsilon_0)}$. More precisely, 
		$$ \P \left( \tau_{i + M + 1} < \infty \ \bigg\vert \ |\mathcal I_i^R| \ge C\la n, \xi_{i + j} \le \varepsilon \ \forall 1 \le j \le M \right) \ge 1 - c (\la^2n)^{-\alpha(1-\varepsilon_0)}.$$
		
		Together with Corollary \ref{cor:goodRoundBdedTrans}, as a direct consequence, there also exist constants $C_0', c' > 0$ only dependent on $\alpha, \varepsilon_0$ such that for all $\la \ge C_0/\sqrt{n}$, $M \le M_0$, 
		
		$$ \P \left( \tau_{i + M + 1} < \infty \ \bigg\vert \ \tau_i < \infty, \xi_i > \varepsilon, \xi_{i + j} \le \varepsilon \ \forall 1 \le j \le M \right) \ge 1 - C'_0 (\la^2n)^{-\alpha(1-\varepsilon_0)}.  $$
	\end{lemma}
	
	\begin{proof}[Proof of Lemma \ref{lm:SurvivalProbOfMConsecRound}]
		The condition $|\mathcal I_i^R| \ge C\la n$ immediately implies that $\tau_i < \infty$. Let $\mathcal E := \left\{ |\mathcal I_i^R| \ge C \la n, \xi_{ i + j} \le \varepsilon \ \forall 1 \le j \le M\right\}$. Let $t > 0$ be the parameter introduced in Lemma \ref{lm:choiceOfParamTechnical}. We introduce the following objects.
		
		\begin{align*}
			\eta_{i+j} &:= \begin{cases}
				\tau_{i +(j+1)} - \tau_{i+j}^S &(\tau_{i+j+1} < \infty), \\
				\infty &(\text{otherwise}),
			\end{cases} \\
			\kappa &:= M_0 \varepsilon + \sum_{j=0}^{M_0} \zeta_{i + j} + (M_0 + 1), \\
			\kappa_0 &:= M_0 \varepsilon + \dfrac{t + 1}{\alpha} (M_0 + 1) + (M_0 + 1),\\
			\mathcal I_* &:= \{v \in \mathcal I_{\tau_i^R} : v \in \mathcal I_t \ \forall t \in [\tau_i^R, \tau_i^R + \kappa]\}.
		\end{align*}
		In words, $\eta_{i+j}$ is the time it takes in round $(i+j)$ for the susceptible root to be infected, and $|\mathcal I_i^*|$ is the number of leaves that are infected at time $\tau_i^R$ and stays infected all throughout $\kappa$ units of time. We will explain the role of $\kappa, \kappa_0$ shortly. The intuition for our approach is as follows. 
		\begin{itemize}
			\item We want to show that w.h.p. $\eta_{i+j} \le 1$ for all $0 \le j \le M$, guaranteeing that the next $M$ rounds (starting from round $i + 1$) is successful. Then, starting from $\tau_i^R$, the remainder of the $i$-th round and the entirety of the next $M$ rounds is covered within the next (random) $\kappa$ units of time.
			\item In order to do so, we will show that w.h.p., $|\mathcal I_*|$ is large, i.e. there is a substantial number of infected leaves that does not recover within $\kappa$ units of time, therefore improving the chances of all $M$ rounds being succesful and sufficiently fast.
			\item However, we need to rule out the case when $\kappa$ is too large (and we do not have enough leaves that are infected throughout all $\kappa$ units of time). Thus, we show that w.h.p. $\kappa \le \kappa_0$.
		\end{itemize} 
		Rigorously,
		\begin{align}
			\Pr\left( \tau_{i+M+1} = \infty \ \bigg\vert\ \mathcal E \right) &\le \Pr\left( \kappa > \kappa_0 \ \bigg\vert\ \mathcal E \right) + \Pr\left( |\mathcal I_*| \le C' \la^{-1} (\la^2 n)^{\varepsilon_0^2/4} \ \bigg\vert\ \mathcal E, \kappa \le \kappa_0 \right) \nonumber \\
			&\ + \Pr\left( \tau_{i+M+1} = \infty \ \bigg\vert\ \mathcal E, \kappa \le \kappa_0, |\mathcal I_*| > C' \la^{-1} (\la^2 n)^{\varepsilon_0^2/4}  \right), \label{eq:subOptDecompose}
		\end{align}
		where $C' = C'(\alpha, \varepsilon_0) > 0$ is chosen appropriately during the course of our calculations below.
		
		By Lemma \ref{lm:BoundTailGamma}, note that  $\sum_{j = 0}^{M_0} \zeta_{i+j} \sim \text{Gam}(M_0 + 1, \alpha)$ and $\kappa$ is independent of $\mathcal E$, so setting $T = t - \log(1 + t)$ and by Lemma \ref{lm:choiceOfParamTechnical},
		\begin{equation}\label{eq:BoundSinMConsec}
			\P(\kappa > \kappa_0) \le \left( (t+1)e^{-t} \right)^{M_0 + 1}  \le \exp\left( - T \cdot \dfrac{K \alpha \log(\la^2 n)}{\log(1/\varepsilon)} \right) \le (\la^2 n)^{-\alpha (1 - \varepsilon_0)}. 
		\end{equation}
		
		For the second term in \eqref{eq:subOptDecompose}, we condition on $\mathcal F_{\tau_i^R}, \{\xi_{i+j}\}_{1 \le i \le M}$ and $\{\zeta_{i+j}\}_{0 \le j \le M_0}$, and omit the conditions for clarity of presentation. On $\mathcal E \cap \{\kappa \le \kappa_0\}$, we have 
		$$ |\mathcal I_*| \succeq \text{Bin}(C \la n, e^{-\kappa_0}), $$
		where we slightly abused the notation for the constant $C$ in Lemma \ref{lm:goodRoundBdedTrans}. Then
		\begin{align*}
			\E(|\mathcal I_*|) &\ge C \la n e^{-\kappa_0} \gtrsim_{\alpha, \varepsilon_0} \la^{-1} \exp\left(\log(\la^2 n) \left( \left( 1 - \dfrac{tK}{\log(1/\varepsilon)}  \right) - \dfrac{K \alpha(\varepsilon + 1 + 1/\alpha)}{\log(1/ \varepsilon)}\right)\right),
			\intertext{and by our choice of parameters in Lemma \ref{lm:choiceOfParamTechnical}, the above is}
			&\ge \la^{-1} \exp\left(\log(\la^2 n) \left(\dfrac{\varepsilon_0^2}{2} - \dfrac{\varepsilon_0^2}{4}\right)\right) = \la^{-1} (\la^2 n)^{\varepsilon_0^2/4}.
		\end{align*}
		Let $2C' = 2C'(\alpha, \varepsilon_0)$ be the implicit constant above, i.e. $\E(|\mathcal I_*|) \ge 2C' \la^{-1} (\la^2 n)^{\varepsilon_0^2/4}$. Moreover, note that $1 \ge 1 - e^{-\kappa} \ge 1 - e^{-M_0\varepsilon} = \Theta(1)$, so 
		$$ \text{Var}(|\mathcal I_*|) = \E (|\mathcal I_*|) (1 - e^{-\kappa}) \approx_{\alpha, \varepsilon_0} \E(|\mathcal I_*|) \gg 1.$$
		Thus, by Lemma \ref{lm:centralBinomBound}, setting $d' = 40\lceil \alpha/ \varepsilon_0^2 \rceil$, we have
		\begin{equation}\label{eq:BoundIStarinMConsec}
			\Pr\left( |\mathcal I_*| \le C' \la^{-1} (\la^2 n)^{\varepsilon_0^2/4} \ \bigg\vert\ \mathcal E, \kappa \le \kappa_0 \right) \lesssim_{\alpha, \varepsilon_0} \dfrac{\text{Var}(|\mathcal I_*|)^{d'}}{\E(|\mathcal I_*|)^{2d'}} 
			\approx_{\alpha, \varepsilon_0} \E(|\mathcal I_*|)^{-d'} \lesssim_{\alpha, \varepsilon_0} (\la^2 n)^{-10\alpha}.
		\end{equation}
		
		For the last term in \eqref{eq:subOptDecompose}, we define the following event
		\begin{align*}
			\mathcal B_J := \left\{ \eta_{i + j} \le 1 \ \forall 0 \le j \le J \right\},
		\end{align*}
		then $\mathcal B_0 \supseteq \mathcal B_1 \supseteq \mathcal B_2 \supseteq \dots \supseteq \mathcal B_M$. Intuitively, $\mathcal B_J$ represents the fact that all the "residual" time from round $i$ to round $(i + J)$ behaves in accordance with the bound for them in $\kappa$. Condition on $\mathcal E' := \mathcal E \cap \left\{\kappa \le \kappa_0, |\mathcal I_*| > C'\la^{-1}(\la^2 n)^{\varepsilon_0^2/4}\right\}$, then $\mathcal B_M$ implies $\tau_{i+M+1} < \infty$. Thus, we have
		\begin{equation}\label{eq:ResidualBoundJToShow1}
			\Pr\left(\tau_{i+M+1} = \infty \ \bigg\vert\ \mathcal E' \right) \le \Pr\left( \mathcal B_M^c \ \bigg\vert \ \mathcal E' \right) \le \sum_{j=0}^M \Pr\left( \mathcal B_j^c \ \bigg\vert\ \mathcal E', \mathcal B_{j-1} \right),
		\end{equation}
		where $\mathcal B_{-1} := \mathcal E'$. 
		
		Consider conditioning on $\mathcal E' \cap \mathcal B_{j-1}$, where $j \ge 1$ (the case where $j = 0$ is proven similarly). Slightly abusing notations, let $v_1, \dots, v_{|\mathcal I_*|}$ be all the leaves in $\mathcal I_*$. At time $\tau_{i+j-1}^S + 1$, we have $$(\tau_{i+j-1}^S + 1) - \tau_i^R = \sum_{k=1}^{j-1} \xi_{i+k} + \sum_{k = 0}^{j-1}\zeta_{i+k} + \sum_{k=0}^{j-2} \eta_{i+k} \le (j-1) \varepsilon + \sum_{k = 0}^{j-1}\zeta_{i+k} + (j-1) < \kappa,$$ so the recovery clocks for the vertices $v_1, \dots, v_{|\mathcal I_*|}$ do not ring before time $\tau_{i+j-1}^S + 1$. Thus, if one of the infection clocks for $v_1, \dots, v_{|\mathcal I_*|}$ rings before $\tau_i^S + 1$, then $\eta_i \le 1$. Conditioned on $\mathcal F_{\tau_{i + j-1}^S}$, the earliest infection clock of $v_i$'s follows $\sim \Exp(\la |\mathcal I_*|)$. On $\mathcal E'$, $\la |\mathcal I_*| \ge C'(\la^2 n)^{\varepsilon_0^2/4}$, which means
		\begin{equation}\label{eq:ResidualBoundJToShow2}
			\Pr\left( \mathcal B_j^c \ \bigg\vert\ \mathcal E', \mathcal B_{j-1} \right) \le \Pr\left( \Exp(\la |\mathcal I_*|) \ge 1 \ \bigg\vert\ \mathcal E', \mathcal B_{j-1} \right) \le \exp\left(- C'(\la^2n)^{\varepsilon_0^2/4}\right).
		\end{equation}
		Together, \eqref{eq:ResidualBoundJToShow1} and \eqref{eq:ResidualBoundJToShow2} implies that
		\begin{equation}\label{eq:ResidualBoundJToShow}
			\Pr\left( \tau_{i+M+1} = \infty \ \bigg\vert\ \mathcal E' \right) \le (M+1) \exp\left(- C'(\la^2n)^{\varepsilon_0^2/4}\right) \lesssim_{\alpha, \varepsilon_0} \log(\la^2 n)\exp\left(- C'(\la^2n)^{\varepsilon_0^2/4}\right).
		\end{equation}
		Equations \eqref{eq:subOptDecompose}, \eqref{eq:BoundSinMConsec}, \eqref{eq:BoundIStarinMConsec}, and \eqref{eq:ResidualBoundJToShow} together completes our proof.
	\end{proof}
	
	\begin{remark}\label{rem:OnlyPlaceForLambda}
		When $\la \ge 1/2$, we can replace $\la$ by $1$ in the definition of $M_0, M_1$, and all of the analysis above. Together with Remark \ref{rem:uppBoundWhenLaGe12}, this ensures that $\text{Var}(|\mathcal I_*|) \gg 1$, so there are no issues in adapting the proof to larger $\la$.
	\end{remark}

	\section{Proof of the lower bounds: a detailed analysis for short runs of bad rounds}\label{sec:shortRunAnalysis}

\subsection{Intuition and main lemma}\label{subsec:AlphaGe1Intro} Our goal is to prove something of the form
$$ \Pr\left(\tau_{i+M+1} = \infty, M \le M_1 \ \bigg\vert\ \tau_i < \infty, \xi_i > \varepsilon, \max_{1 \le j \le M} \xi_j \le \varepsilon \right) \lesssim_{\alpha, \varepsilon_0} (\la^2 n)^{-\alpha}. $$

The main issue is that we have very little control over the events we are conditioning on; in particular, not much can be inferred from $\{\tau_i < \infty\}$. This calls for some additional events in the conditions. Luckily, there exist a few footholds allowing for reductions.

Our first reduction comes from Lemma \ref{lm:numSdivNisAlwaysPosSustainedSIRS}, which asserts that with overwhelmingly high probability, the number of non-immune leaves is of order $\Theta(n)$ for exponentially long time. Define, for $t_0 > 0$, 
\begin{align*}
	\mathcal U_{t_0} &:= \left\{ \sup_{0 \le t \le t_0} |R_t| \le (1 - B) n \right\},  \quad
	\tilde{\mathcal U}_{t_0} := \left\{ \sup_{0 \le t \le t_0} |\tilde{R}_t| \ge (1 - B) n \right\}, \quad
	L := \exp(4B^3 n),
\end{align*}
where $B$ is the constant in Lemma \ref{lm:numSdivNisAlwaysPosSustainedSIRS}. If we take $\tau_i \lesssim \sqrt{L}$ (which is a lot more wiggle room than needed), then we can already ensure that the number of non-immune leaves is always enough to work with.

Our second reduction comes from the fact that the chunk length is short. This means that we can guarantee that w.h.p. all of our bad rounds cannot be \textit{that} bad! From the issues raised in the introduction, ideally, we want all of our $\xi$'s to be at least $(\la^2 n)^{-1}$. Turns out, we can do much better: w.h.p., all $\xi$'s are at least of order $(\la^2 n)^{-\alpha \varepsilon_0^+}$, where $0< \varepsilon_0 < \varepsilon_0^+ < \alpha/100$ is chosen later.

To simplify our calculations, we would like to relate failure of a round to quantities of previous rounds \textit{without} having to condition on their successes. It turns out that the \textit{sustained} SIRS process is the right object to look at. Since we are viewing the SIRS and sustained SIRS as being coupled on the same graphical representation, we automatically have $$ \mathcal U_{t} = \tilde{\mathcal U}_t \ \forall 0 \le t \le \tau. $$

At this point, we are ready to state the central lemma of this section.

\begin{lemma}\label{lm:mainLemmaAlphaGe1}
	Consider the SIRS process $(X_t)$. Then there exist constants $B, \varepsilon \in (0, 1)$, $C_0, C, C' > 0$ only dependent on $\alpha$ such that the following holds: For $\la \ge C_0/\sqrt{n}$ and every $i \in \mathbb{Z}_{>0}$, $i \le (\la^2 n)^{\alpha + 1}$, starting from $i$-th round, let $M$ be the (random) number of consecutive $\varepsilon$-bad rounds that follows right after the $i$-th round. Condition on the event that there are at least $C \la n$ infected leaves, $\mathcal U_{\tau_i^R}$, and $\{\tau_i^R \le 2\sqrt{L}\}$, then the probability that the process survives $M$ rounds after $i$ happening is at least $1 - C'(\la^2 n)^{-\alpha}$. More precisely, let $M$ be such that $\{\xi_{i+1},\dots, \xi_{i+M} \le \varepsilon, \xi_{i+M+1} > \varepsilon\}$, then 
	$$ \Pr\left( \tau_{i + M + 1} = \infty \ \bigg\vert\ |\mathcal I_i^R| \ge C\la n, \tau_i^R < 2\sqrt{L}, \mathcal U_{\tau_i^R} \right) \le C'(\la^2 n)^{-\alpha}.$$
\end{lemma}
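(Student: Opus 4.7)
The plan is to decompose the failure event according to the random value of $M$, exploiting the fact that $M$ depends only on the iid $\Exp(1)$ variables $\xi_{i+1},\xi_{i+2},\ldots$ and is therefore independent of $\mathcal F_{\tau_i^R}$. Using Lemma \ref{lm:choiceOfParamTechnical}, fix $\varepsilon_0 \in (0, 1/(100(2\alpha+1)))$ and the corresponding parameters $\varepsilon, K$. Let $M_0 := \lceil K\alpha \log(\la^2n)/\log(1/\varepsilon)\rceil$ (as in Lemma \ref{lm:SurvivalProbOfMConsecRound}) and introduce an intermediate threshold $M_1 := \lceil 2\alpha\varepsilon_0\log(\la^2n)/\log(1/\varepsilon)\rceil$. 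I would split
\[
\P\bigl(\tau_{i+M+1}=\infty \mid \text{conds}\bigr) \le \P(M \ge M_0) + \sum_{m=M_1+1}^{M_0-1}\P(\text{fail},\, M=m \mid \text{conds}) + \sum_{m=0}^{M_1}\P(\text{fail},\, M=m \mid \text{conds}).
\]

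For the very-large tail, since each round is $\varepsilon$-bad independently with probability $1-e^{-\varepsilon}\le \varepsilon$, one has $\P(M \ge M_0) \le \varepsilon^{M_0} \le (\la^2n)^{-K\alpha} \le (\la^2 n)^{-\alpha}$ by the choice $K\ge 1$. For the intermediate range $M_1 < m \le M_0$, apply Lemma \ref{lm:SurvivalProbOfMConsecRound} to get the per-$m$ failure bound $C_0(\la^2n)^{-\alpha(1-\varepsilon_0)}$ and combine with the geometric tail $\sum_{m>M_1}\P(M=m) \le 2\varepsilon^{M_1}\le 2(\la^2n)^{-2\alpha\varepsilon_0}$. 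Multiplying yields total contribution $\le 2C_0 (\la^2n)^{-\alpha(1+\varepsilon_0)} \le (\la^2n)^{-\alpha}$, so the intermediate range is harmless.

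The real work is in the small-$M$ range $m \le M_1$. Here I aim to prove that for every such $m$,
\[
\P(\text{fail} \mid M=m,\text{conds}) \le C''\, m \cdot (\la^2n)^{-\alpha}.
\]
The plan is to inductively establish $|\mathcal I_{i+k}^R| \ge c\la n$ for all $k \le m$ with probability at least $1 - (\la^2n)^{-10\alpha}$ per step. The $\mathcal U_{\tau_i^R}$ condition supplies $\Omega(n)$ susceptible leaves throughout, so the root can produce fresh infections of expected count $\approx \la n\cdot \mathbb E[\xi\mid \xi\le\varepsilon] \approx \la n \varepsilon/2$ during each bad round; on the other hand, the time between consecutive recovery instants is $\zeta_{i+k-1}+\eta_{i+k-1}+\xi_{i+k}$, controlled by $\zeta \sim \Exp(\alpha)$ plus negligible pieces, so only an expected fraction $e^{-O(1)}$ of previously infected leaves recover. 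Chernoff concentration for the Binomial carry-over count and for the fresh infections, combined with a careful choice of $\varepsilon$ small enough that the equilibrium infected count remains $\gtrsim c\la n$, yields the inductive bound. Once $|\mathcal I_{i+k}^R|\ge c\la n$ holds at every round, Lemma \ref{lm:orderfail} gives per-round failure probability $O((\la^2n)^{-\alpha})$, and a union bound over the $m$ rounds produces the claimed $O(m(\la^2n)^{-\alpha})$ estimate. Summing against $\P(M=m)$ then gives $\sum_{m\le M_1}\P(M=m)\cdot O(m(\la^2n)^{-\alpha}) = O(\mathbb EM \cdot (\la^2n)^{-\alpha}) = O((\la^2n)^{-\alpha})$, since $\mathbb EM = e^{\varepsilon}-1 = O(\varepsilon)$ is bounded.

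The main obstacle I anticipate is the inductive concentration argument in Range 3: maintaining $|\mathcal I_{i+k}^R|\ge c\la n$ over up to $M_1 \sim \log(\la^2n)$ rounds requires that the net drift in the infected leaf count stays bounded, which is delicate because the cumulative deimmunization time $\sum_{k<m}\zeta_{i+k}\sim \text{Gam}(m,\alpha)$ can have moderate fluctuations. I expect to need $\varepsilon$ chosen genuinely small (beyond just $\varepsilon<1/8$) so that the fresh-infection input dominates recovery losses round-to-round, and to use the $\sqrt L$ constraint on $\tau_i^R$ together with the coupling to the sustained process $\tilde X$ to ensure that the susceptible pool stays of order $n$ throughout the chunk via Lemma \ref{lm:numSdivNisAlwaysPosSustainedSIRS}.
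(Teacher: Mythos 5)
Your first two reductions match the paper: the decomposition over the value of $M$ (using independence of $M$ from $\mathcal F_{\tau_i^R}$), the geometric tail bound for $M > M_0$, and the use of Lemma \ref{lm:SurvivalProbOfMConsecRound} times the geometric tail for the intermediate range are exactly the paper's first reduction step, and those pieces are fine.

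The gap is in the small-$M$ range, which is where all the real difficulty of the lemma sits. Your plan is to show inductively that $|\mathcal I_{i+k}^R| \ge c\la n$ at every bad round with per-step failure probability $(\la^2 n)^{-10\alpha}$, and then union-bound per-round failure via Lemma \ref{lm:orderfail}. That inductive claim is false, because the infected count during a chunk of bad rounds is driven by the non-concentrating exponential variables $\xi_{i+k}$ and $\zeta_{i+k}$, not by Binomial fluctuations that Chernoff can kill. Conditioned on a round being $\varepsilon$-bad, $\xi_{i+k} \le \delta$ has probability about $\delta/\varepsilon$, so with probability far exceeding $(\la^2 n)^{-10\alpha}$ the fresh infections in that round are $O(\la n \delta) \ll c\la n$; simultaneously the carry-over from the previous round is multiplied by roughly $e^{-(\zeta_{i+k-1}+O(1))}$, and since $\E e^{-\zeta} = \alpha/(\alpha+1) < 1$ the carried-over population typically decays geometrically across consecutive bad rounds, so after $k$ rounds it is of order $\la n\, e^{-\Theta(k)}$ unless replenished. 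Hence $|\mathcal I_{i+k}^R|$ drops below any fixed multiple of $\la n$ with probability that is polynomial in the relevant quantities (or even constant over a few rounds), and on those events the per-round failure probability from Lemma \ref{lm:orderfail} is $(\la|\mathcal I_{i+k}^R|)^{-\alpha}$, which can exceed $(\la^2 n)^{-\alpha}$ by an unbounded factor. This is precisely the obstruction the paper flags in its ``Main difficulties'' section, and it is why the paper does not attempt to keep $|\mathcal I^R|$ at order $\la n$: instead it couples to the sustained process $\tilde X$, restricts (second reduction) to $\xi_{i+j} \ge (\la^2 n)^{-\alpha\varepsilon_0^+}$ so that Chernoff estimates make sense, and proves by induction (Lemma \ref{lm:AEG}, via the events $G_{i+k}, G'_{i+k}, f_{i+k}, F_k$) the lower bound $|\tilde{\mathcal I}_{i+k}^R| \gtrsim (B^3/4)^k \la n \prod_{j=1}^k\bigl(e^{-\zeta_{i+j-1}}+\xi_{i+j}\bigr)$, then controls the conditional failure probability through the negative moment $\E\prod_j\bigl(e^{-\zeta_{i+j-1}}+\xi_{i+j}\bigr)^{-\alpha}$, choosing $\varepsilon$ small enough that each factor contributes at most $\tfrac12$ after absorbing $(4/B^3)^{\alpha}$, so the sum over $k$ of the resulting bounds is a convergent geometric series of total order $(\la^2 n)^{-\alpha}$. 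Your outline contains no mechanism for handling the regime where the infected count genuinely falls well below $\la n$, so the proof as proposed does not go through; to fix it you would need something equivalent to the paper's product-of-factors lower bound and its $\alpha$-th negative moment computation (or another way to average the per-round failure probability over the law of $\xi$'s and $\zeta$'s rather than union-bounding on a high-probability event).
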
 

We have the following immediate corollary. In Section \ref{sec:proof:lb}, we will show how to complete the lower bound using this corollary. 

\begin{corollary}\label{cor:mainCorollaryAlphaGe1}
	In the setting as in Lemma \ref{lm:mainLemmaAlphaGe1}, there exist constants $C_0, C', \varepsilon > 0$ only dependent on $\alpha$ (that might differ from the constants $C_0, C'$ in the Lemma \ref{lm:mainLemmaAlphaGe1}) such that for $\la \ge C_0/\sqrt{n}$, 
	$$ \Pr\left( \tau_{i + M + 1} = \infty \ \bigg\vert\ \xi_i \ge \varepsilon, \tau_i < \sqrt{L}, \sup_{0 \le t \le \tau_i} |R_t| \le (1 - B)n \right) \le C'(\la^2 n)^{-\alpha}.$$
\end{corollary}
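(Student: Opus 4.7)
The idea is to reduce Corollary \ref{cor:mainCorollaryAlphaGe1} to Lemma \ref{lm:mainLemmaAlphaGe1} by showing that the corollary's conditioning event
\[ A := \{\xi_i \ge \varepsilon,\ \tau_i < \sqrt{L}/2,\ \inf_{0 \le t \le \tau_i}|S_t| \ge Bn\}\]
forces the lemma's conditioning event
\[ B_0 := \{|\mathcal I_i^R| \ge C\la n,\ \tau_i^R < \sqrt{L},\ \mathcal U_{\tau_i^R}\}\]
to hold outside a set of probability $o((\la^2n)^{-\alpha})$. A union bound yields
\[ \P(\tau_{i+M+1}=\infty \mid A) \le \P(B_0^c \mid A) + \P(\tau_{i+M+1}=\infty \mid A \cap B_0), \]
so it is enough to control each summand.

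For $\P(B_0^c \mid A)$, I would treat the three failure modes separately. First, since $\{\xi_i \ge \varepsilon\} \subseteq A$, Corollary \ref{cor:goodRoundBdedTrans} gives $|\mathcal I_i^R| \succeq \text{Bin}(n, c\la)$ conditionally on $\sigma(\mathcal F_{\tau_i}, \xi_i)$ on $A$, so a Chernoff bound produces $\P(|\mathcal I_i^R| < C\la n \mid A) = O(\exp(-\Theta(\la^2 n)))$ for a suitable constant $C$. Second, since $\xi_i \sim \Exp(1)$ and $\tau_i < \sqrt L/2$ on $A$, one has $\P(\tau_i^R \ge \sqrt L \mid A) \le \P(\xi_i > \sqrt L/2 \mid \xi_i \ge \varepsilon) \le e^{-(\sqrt L/2-\varepsilon)}$. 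Third, applying Lemma \ref{lm:numSdivNisAlwaysPosSustainedSIRS} at the stopping time $T=\tau_i$ with $b = B$ gives $|S_{\tau_i+t}| \ge B^2 n$ for all $t \in [0, L]$ with failure probability at most $e^{-(2B-4B^2)Bn} + e^{-4B^2 n}$; combined with $|S_t| \ge Bn \ge B^2 n$ on $[0,\tau_i]$ and the fact (from the previous step) that $\tau_i^R - \tau_i \le \sqrt L/2 \le L$ outside a doubly-exponentially small set, this yields $\mathcal U_{\tau_i^R}$. Summing the three bounds gives $\P(B_0^c \mid A) = o((\la^2 n)^{-\alpha})$.

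For the second summand, observe that every component of $A$ is $\mathcal F_{\tau_i^R}$-measurable: $\xi_i = \tau_i^R - \tau_i$ on $\{\tau_i < \infty\}$, while the remaining two conditions are $\mathcal F_{\tau_i}$-measurable, hence $\mathcal F_{\tau_i^R}$-measurable. The proof of Lemma \ref{lm:mainLemmaAlphaGe1} (structured, as in Lemma \ref{lm:SurvivalProbOfMConsecRound}, around conditioning on a $\sigma$-algebra $\mathcal G_1 \supseteq \mathcal F_{\tau_i^R}$) yields the bound in the pointwise form $1_{B_0}\P(\tau_{i+M+1}=\infty \mid \mathcal F_{\tau_i^R}) \le C'(\la^2 n)^{-\alpha}\,1_{B_0}$. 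Integrating this inequality against the normalized indicator $1_{A\cap B_0}/\P(A\cap B_0)$ immediately gives $\P(\tau_{i+M+1}=\infty \mid A \cap B_0) \le C'(\la^2 n)^{-\alpha}$, completing the proof.

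The main subtlety is ensuring that Lemma \ref{lm:mainLemmaAlphaGe1} really is available in the $\mathcal F_{\tau_i^R}$-pointwise form above so that the extra conditioning on $A$ is absorbed for free; this should fall out of its proof just as the analogous pointwise statement does in Lemma \ref{lm:SurvivalProbOfMConsecRound}. Everything else is a routine union bound using the tools already developed, since the constants $B$ and $\varepsilon$ are chosen small enough that all three of the failure probabilities above are exponentially (or doubly exponentially) small in $n$ and thus comfortably negligible compared to $(\la^2 n)^{-\alpha}$.
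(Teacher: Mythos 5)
Your proposal is correct and follows essentially the same route as the paper: the same union-bound reduction to the three bad events (small $|\mathcal I_i^R|$ handled via Corollary \ref{cor:goodRoundBdedTrans} and a Chernoff bound, late $\tau_i^R$ handled via a tail/Markov bound on $\xi_i$, and $\mathcal U_{\tau_i^R}^c$ handled via Lemma \ref{lm:numSdivNisAlwaysPosSustainedSIRS}), followed by an application of Lemma \ref{lm:mainLemmaAlphaGe1} for the main term. The only differences are cosmetic — you use the exponential tail of $\xi_i$ where the paper uses Markov's inequality on $\E(\tau_i^R-\tau_i)$, and you spell out more explicitly (via the $\mathcal F_{\tau_i^R}$-measurability of the conditioning event and the pointwise form of the lemma) why the extra conditioning is absorbed, a point the paper leaves implicit.
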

\begin{proof}[Proof of Corollary \ref{cor:mainCorollaryAlphaGe1}]  Write $\mathcal E' := \left\{ \xi_i \ge \varepsilon, \tau_i < \sqrt{L}, \sup_{0 \le t \le \tau_i} |R_t| \le (1 - B)n \right\}$, then we have
	\begin{align*}
		\Pr\left( \tau_{i + M + 1} = \infty \ \bigg\vert\ \mathcal E' \right) &\le \Pr\left( \tau_{i + M + 1} = \infty \ \bigg\vert\ |\mathcal I_i^R| \ge C\la n, \tau_i^R < 2\sqrt{L}, \mathcal U_{\tau_i^R} \right) \\
		&\ + \Pr\left( |\mathcal I_i^R| < C\la n \ \bigg\vert\ \mathcal E' \right) + \Pr\left( \tau_i^R > 2\sqrt{L} \ \bigg\vert\ \mathcal E' \right) + \Pr\left( \mathcal U^c_{\tau_i^R} \ \bigg\vert\ \mathcal E' \right),
	\end{align*}
	where in the first term, we can get rid of $\mathcal E'$ in the condition by using Law of Total Expectation over $\mathcal F_{\tau_i^R}$. Conditioning on $\sigma(\mathcal F_{\tau_i}, \xi_i)$, we can see that (and increase our choice of $d$ in each step if needed) in the summation above, the second term is of order $\exp\left(-\Theta_{\alpha}(\la^2 n)\right)$ (Corollary \ref{cor:goodRoundBdedTrans}), the third term is of order $O(L^{-1/2})$ (Markov inequality and that $(\tau_i^R - \tau_i)$ has mean $\Theta_{\alpha}(1)$), and the fourth term is of order $O(\exp(-4B^3 n))$ (Lemma \ref{lm:numSdivNisAlwaysPosSustainedSIRS}).
	Together with Lemma \ref{lm:mainLemmaAlphaGe1}, the result follows directly.
\end{proof}

\subsection{Proof of the lower bound}\label{sec:proof:lb}
Let $B, \varepsilon, C' > 0$ be chosen according to Corollary \ref{cor:mainCorollaryAlphaGe1}, $\Psi_0 := \frac{(\la^2 n)^{\alpha}}{3C'}$, and $1 \le m_1 < \dots$ be the (random) indices of all $\varepsilon$-good rounds. We will show that $$\tau_{m_{\Psi_0}} < \infty$$ with probability $\Theta_{\alpha}(1)$. Then, since $m_k \ge k$ for all $k \ge 1$, we have, with probability $\Theta_{\alpha}(1)$, $\Psi_X \ge \Psi_0$, which completes the proof. 

For all $1 \le j \le \Psi_0 - 1$, we have
\begin{align}
	&\Pr(\tau_{m_{j+1}} = \infty) \nonumber \\
	\le&\ \Pr(\tau_{m_j} = \infty) + \Pr\left(\tau_{m_{j+1}} = \infty \ \bigg\vert\ \tau_{m_j} < \sqrt{L}, \inf_{0 \le t \le \sup_{m_j}} |R_t| \le (1 - B)n, m_{j} \le (\la^2 n)^{\alpha + 1} \right) \nonumber  \\
	+&\ \Pr\left( m_{j} > (\la^2 n)^{\alpha + 1} \right) + \Pr\left( \sqrt{L} < \tau_{m_{j}}  < \infty, m_{j} \le (\la^2 n)^{\alpha + 1} \right) + \Pr\left( \sup_{t\ge 0} |R_t| > (1 -B)n \right), \nonumber 
	\intertext{and by Corollary \ref{cor:mainCorollaryAlphaGe1} and Proposition \ref{prop:S}, the above is, for some $C > 0$ dependent on $\alpha$,}
	\le&\ \Pr(\tau_{m_j} = \infty) + C'(\la^2 n)^{-\alpha} + \Pr\left( m_{j} > (\la^2 n)^{\alpha + 1} \right) \nonumber \\
	+&\  \Pr\left( \sqrt{L} < \tau_{m_{j}}  < \infty, m_{j} \le (\la^2 n)^{\alpha + 1} \right) + Ce^{-4B^2 n}. \label{eq:AlphaGe1FinalBound1}
\end{align}
Now, $m_{j} > (\la^2 n)^{\alpha + 1}$ means that within $\{\xi_j\}_{1 \le j \le (\la^2n)^{\alpha + 1}}$, there are only $j < \Psi_0$ rounds that are $\varepsilon$-good. Such number of $\varepsilon$-good rounds follows $\sim \text{Bin}((\la^2 n)^{\alpha + 1}, e^{-\varepsilon})$, so the probability that there are less than $\Psi_0$ good rounds is, for sufficiently large $n$ (and by a Chernoff bound)
\begin{equation}\label{eq:AlphaGe1FinalBound2}
	\Pr\left( m_{j} > (\la^2 n)^{\alpha + 1} \right) \le \exp\left( -  \dfrac{e^{-2\varepsilon}}{2}(\la^2 n)^{\alpha + 1} \right).
\end{equation}
Finally, we see that together with the upper bound for $\E\tau$, by Markov's inequality,
\begin{equation}
	\Pr\left( \sqrt{L} < \tau_{m_{j}}  < \infty, m_{j} \le (\la^2 n)^{\alpha + 1} \right) \le \Pr\left( \tau > \sqrt{L} \right) \le L^{-1/2}\E\tau \le C (\la^2 n)^{\alpha} e^{-2B^2 n}, \label{eq:AlphaGe1FinalBound3}
\end{equation}
for some $C$ only dependent on $\alpha$. Thus, \eqref{eq:AlphaGe1FinalBound1}, \eqref{eq:AlphaGe1FinalBound2}, and \eqref{eq:AlphaGe1FinalBound3} together implies that for sufficiently large $n$, for all $1 \le j \le \Psi_0 -1$,
$$	\Pr(\tau_{m_{j+1}} = \infty) \le \Pr(\tau_{m_j} = \infty) + \dfrac{3C'}{2} (\la^2 n)^{-\alpha} = \Pr(\tau_{m_j} = \infty) + \dfrac{1}{2\Psi_0}.$$
To complete the proof, we can use induction on $j$ to see that
$$ \Pr\left( \tau_{m_{\Psi_0}} = \infty \right) \le \dfrac{\Psi_0 - 1}{2\Psi_0} + \Pr\left( \tau_{m_1} = \infty \right) \le \dfrac{\Psi_0 - 1}{2\Psi_0} + \underbrace{\Pr(n_1 > 1)}_{\le \varepsilon} + \underbrace{\Pr(m_1 = 1, \tau_1 = \infty)}_{= 0}  \le \dfrac{1}{2} + \varepsilon < \dfrac{3}{4}.$$ 

The rest of this section is devoted to proving Lemma \ref{lm:mainLemmaAlphaGe1}, and is organized as follows. In Subsection \ref{subsec:ReduceAndCoupling}, we translate the problem to the sustained SIRS. Subsection \ref{subsec:SustainedSIRSAnalysis} presents further intuition and reduces the problem to the key Lemma \ref{lm:AEG}, whose proof is presented in Subsection \ref{subsec:proofLemHardInduction}.

\begin{remark}
	For the rest of this section, we will chiefly work with conditioning on filtrations instead of events. We will trade off convenience of notations for freedom to enlarge/ shrink events we are conditioning on. Readers are encouraged to focus more on events than filtrations.
\end{remark}

\subsection{Reductions and coupling}\label{subsec:ReduceAndCoupling}
For $\varepsilon_0$ to be chosen later, and $\varepsilon > 0$ to be chosen accordingly, pick $C = 2C(\alpha, \varepsilon) \in (0, 1)$ according to Corollary \ref{cor:goodRoundBdedTrans}. We define the following events. 
\begin{align*}
	\mathcal E &:= \left\{|\mathcal I_i^R| \ge C\la n, \tau_i^R < 2\sqrt{L}\right\} \cap \mathcal U_{\tau_i^R}, \qquad \qquad \tilde{\mathcal E} :=\left\{|\tilde{\mathcal I}_i^R| \ge C \la n, \tilde{\tau}_i^R \le 2\sqrt{L}\right\} \cap \tilde{\mathcal U}_{\tilde{\tau}_i^R}, \\
	E &:= \left\{\min_{1 \le j \le M_1} \xi_{i +j} \ge (\la^2 \alpha)^{-\alpha\varepsilon_0^+}\right\}, \qquad E'_k:= \left\{\xi_{i +j} \le \varepsilon \ \forall 1 \le j \le k \right\}, \qquad E_k := E \cap E'_k.
\end{align*}
We also let $A_k$ denote the event that round $k$ fails in the \textit{sustained} SIRS process. Clearly, $E_1 \supseteq E_2 \supseteq \dots \supseteq E_{M_1}$ and $\mathcal E \subseteq \tilde{\mathcal E}$. Moreover, we leave it for readers to verify that $\{\tau_i < \infty\}, \mathcal E \in \mathcal F_{\tilde{\tau}_i^R}$, and $\tilde{\mathcal E}, \tilde{\mathcal E}_{i, k} \in \mathcal F_{\tau_i^R}$. 
\subsubsection{Reductions} From Subsection \ref{subsec:genStratAlphaGe1}, our problem is readily reduced to showing that $$ \Pr\left(\tau_{i+m+1} = \infty, M \le M_1 \ \bigg\vert\ \mathcal E \right) \lesssim_{\alpha, \varepsilon_0} (\la^2 n)^{-\alpha}. $$
Moreover, 
\begin{align*}
	&\Pr\left(\tau_{i+m+1} = \infty, M \le M_1, E_M^c \ \bigg\vert\ \mathcal E \right) \\
	\le&\ \Pr\left(\tau_{i+m+1} = \infty \ \bigg\vert\ \mathcal E, E_M^c, M \le M_1 \right) \Pr\left( M \le M_1, E_M^c \ \bigg\vert\ \mathcal E \right),
	\intertext{and by Lemma \ref{lm:SurvivalProbOfMConsecRound}, and the fact that $\{\xi_{i+j}\}_{j \ge 1}$ is independent of $\mathcal E$, the above is}
	\lesssim_{\alpha, \varepsilon_0}&\ (\la^2 n)^{-\alpha(1 - \varepsilon_0)}\sum_{m = 1}^{M_1} \Pr\left( \max_{1 \le j \le m}\xi_{i + j} \le \varepsilon, \min_{1 \le j \le M_1} \xi_{i +j} \le (\la^2 n)^{-\alpha \varepsilon_0^+} \right) \lesssim_{\alpha, \varepsilon_0, \varepsilon_0^+} 1_{\mathcal E} (\la^2 n)^{-\alpha}. 
\end{align*}
Our problem is thus further reduced to showing that $$\Pr\left(\tau_{i+m+1} = \infty, M \le M_1, E_M \ \bigg\vert\ \mathcal E \right) \lesssim_{\alpha, \varepsilon_0, \varepsilon_0^+} (\la^2 n)^{-\alpha}. $$

\subsubsection{Coupling} Note that $|\mathcal I_i^R| \ge C\la n$ automatically implies that $\tau_i < \infty$, and so $\tau_i^R = \tilde{\tau}_i^R$, and that $(X_t)$ and $(\tilde{X}_t)$ coincides up to at least time $\tau_i^R = \tilde{\tau}_i^R$. Therefore,
\begin{align*}
	&\Pr\left(\tau_{i+m+1} = \infty, M \le M_1, E_M \ \bigg\vert\ \mathcal F_{\tau_i^R}, \mathcal E \right) \\
	\le&\ 1_{\{\tau_i < \infty\}}1_{\mathcal E}\sum_{m = 1}^{M_1} \sum_{k = 0}^m \Pr\left(A_{i+k}, M= m, E_m \ \bigg\vert \ \mathcal F_{\tilde{\tau}_i^R}, \tilde{\mathcal E} \right),
	\intertext{and since the calculations involving $k = 0$ is a routine application of Remark \ref{rem:SurvivalProbOfGoodRound}, the above is}
	\lesssim_{\alpha, \varepsilon_0}&\  1_{\{\tau_i < \infty\}}1_{\mathcal E} \left((\la^2n)^{-\alpha} + \sum_{m = 1}^{M_1} \sum_{k = 1}^m  \Pr\left( A_{i + k}, M = m, E_m \ \bigg\vert\ \mathcal F_{\tilde{\tau}_i^R}, \tilde{\mathcal E} \right) \right)\\
	=&\ 1_{\{\tau_i < \infty\}}1_{\mathcal E}\left((\la^2n)^{-\alpha} + \sum_{k = 1}^{M_1} \sum_{m = k}^{M_1}  \Pr\left( A_{i + k}, M = m, E_m \ \bigg\vert\ \mathcal F_{\tilde{\tau}_i^R}, \tilde{\mathcal E} \right)\right) \\
	\le&\ 1_{\{\tau_i < \infty\}}1_{\mathcal E}\left((\la^2n)^{-\alpha} + \sum_{k = 1}^{M_1} \Pr\left(A_{i + k}, E_k \ \bigg\vert\ \mathcal F_{\tilde{\tau}_i^R}, \tilde{\mathcal E} \right)\right),
\end{align*}
so our problem is now reduced to showing that
\begin{equation}\label{eq:red1}
	\sum_{k = 1}^{M_1} \Pr\left(A_{i + k}, E_k \ \bigg\vert\ \mathcal F_{\tilde{\tau}_i^R}, \tilde{\mathcal E} \right) \lesssim_{\alpha, \varepsilon_0, \varepsilon_0^+} 1_{\tilde{\mathcal E}}(\la^2 n)^{-\alpha}.
\end{equation}

\subsection{Analysis of sustained SIRS: next intuitions}\label{subsec:SustainedSIRSAnalysis}
Having completely transferred the problem to the sustained SIRS process, we now introduce the following events and explain our motivations.
\begin{align*}
	g_{i+k} &:= \{|\tilde{\mathcal I}^R_{i + (k-1)}| > \la n\}, \qquad  G_k := \bigcup_{j=1}^{k-1} g_{i+j},   \\
	G'_{i + k} &:= \left\{ |\tilde{\mathcal I}^R_{i+k}| \ge \dfrac{B^2 \la n}{2} \xi_{i+j} \ \forall 1 \le j \le k \right\}, \\
	f_{i+k} &:= \left\{|\tilde{\mathcal I}^R_{i + k}| \ge |\tilde{\mathcal I}^R_{i + k - 1}| \exp\left(- (\zeta_{i+k} + 2 + \varepsilon)\right) \right\} \cup \left\{\xi_{i + k} > e^{-\zeta_{i + k - 1}} \right\}, \qquad F_k := \bigcap_{j = 1}^ k f_{i+j}, \\
	\tilde{\mathcal E}_{i, k} &:=  \left\{ |\tilde{\mathcal I}_i^R| \ge C\la n, \tilde{\tau}_{i}^R \le \sqrt{L}\left(3 - \frac{k-1}{M_1}\right) \right\} \cap \tilde{\mathcal U}_{\tilde{\tau}_i^R} \supseteq \tilde{\mathcal E}.
\end{align*}

Probability of failing a round, $A_{i+(k+1)}$, is again directly related to $|\tilde{I}_{i +k}^R|$. Its main contributions are from (i) $|\tilde{I}_{i + (k-1)}^R|$ and (ii) newly infected leaves at the beginning of round $(i + k)$. $f_{i + k}$ is the event that contributions from either source is good enough, and we split it as above to compare contributions. Thus, $F_k^c$ represents the \textit{bad} event in which at some point, contributions from both sources are bad. Moreover, $g_{i + k}$ is a \textit{good} event in which we have enough infected leaves at $\tilde{\tau}_{i + (k-1)}^R$, at which point we can "restart" the procedure.

By virtue of Lemma \ref{lm:numSdivNisAlwaysPosSustainedSIRS}, we can make sure that starting from $\tilde{\tau}_i^R$, since $|\tilde{R}_i^R| \le (1 - B) n$, then with probability at least $1 - 2e^{- B^3 n}$, the number of non-immune leaves is always going to be at least $B^2 n$ for at least $e^{4B^3 n}$ time unit, which is more than long enough to cover $M$ rounds of $(\tilde{X}_t)$ with overwhelmingly high probability. Thus, when $\xi_{i+k}\le \varepsilon$, $|\tilde{\mathcal I}_{i+k}^R|$ is at least $\text{Bin}(B^2 n, 2\xi_{i+j}/3)$, and condition on $\xi_{i+j} \ge (\la^2 n)^{-\alpha \varepsilon_0^+}$, we can use a routine Chernoff bound for $(G'_{i+k})^c$ to obtain 
$$ 	\Pr\left( (G'_{i+k})^c\ \bigg\vert\ \sigma\left(\mathcal F_{\tilde{\tau}_i^R}, 1_{E} \right), E, \tilde{\mathcal E} \right) 
\le 1_{E} 1_{ \tilde{\mathcal E} }\exp\left( -  \Theta_{\alpha, \varepsilon_0}((\la^2 n)^{1 - 2\alpha\varepsilon_0^+}) \right).$$

Together with the above, \eqref{eq:red1} follows directly from the following lemma.
\begin{lemma}\label{lm:AEG}
	Keeping all the notations as above, there exists a constant $C' > 0$ only dependent on $\alpha,\varepsilon_0, \varepsilon_0^+$ such that the following holds: for all $1 \le k \le M_1$, 
	\begin{equation*}
		\Pr\left( A_{i+k}, E'_k, G'_{i+k} \ \bigg\vert\ \sigma\left(\mathcal F_{\tilde{\tau}_i^R}, 1_{{E}} \right), {E}, \tilde{\mathcal E}_{i, k} \right) \le\ 1_{{E}}1_{\tilde{\mathcal E}_{i, k}} C' (\la^2 n)^{-\alpha} \left( M_1^{-1} + 2^{-k} \right).
	\end{equation*}
\end{lemma}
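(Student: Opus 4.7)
We decompose
\begin{align*}
\{A_{i+k},E'_k,G'_{i+k}\} &\subseteq \{A_{i+k},E'_k,G'_{i+k},G_k\}\\
&\quad \cup \{A_{i+k},E'_k,G'_{i+k},G_k^c,F_k\}\\
&\quad \cup \{A_{i+k},E'_k,G'_{i+k},G_k^c,F_k^c\},
\end{align*}
and bound each of the three pieces separately; in every case the final step is an application of Lemma~\ref{lm:orderfail} after establishing a quantitative lower bound on $|\tilde{\mathcal I}^R_{i+k}|$.

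For the $G_k$-piece, the conditioning $|\tilde{\mathcal I}^R_i|\ge C\la n\ge\la n$ already forces $g_{i+1}$, so $G_k$ holds automatically for $k\ge 2$ (and the $k=1$ contribution will be absorbed in the other pieces). We partition $G_k$ by the largest $j^*\le k-1$ for which $g_{i+j^*}$ holds and re-center at round $i+j^*-1$. On $G'_{i+k}$ one has $|\tilde{\mathcal I}^R_{i+k}|\ge\tfrac{B^3\la n}{2}Z$ with $Z:=\max_{1\le j\le k}\xi_{i+j}$, so Lemma~\ref{lm:orderfail} gives $\Pr(A_{i+k}\mid Z)\lesssim_\alpha(\la^2 n\,Z)^{-\alpha}$. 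Under the conditioning on $E\cap E'_k$, the $\xi_{i+j}$ are i.i.d.\ $\Exp(1)$ restricted to $[(\la^2 n)^{-\alpha\varepsilon_0^+},\varepsilon]$; computing $\E[Z^{-\alpha}\mathbf 1_{E'_k}]$ by direct integration (splitting at the tail scale $Z\asymp\varepsilon$) and combining with $\Pr(E'_k)\le(1-e^{-\varepsilon})^k$ yields a contribution $\lesssim_\alpha 2^{-k}(\la^2 n)^{-\alpha}$, once $\varepsilon$ is chosen sufficiently small relative to $\alpha$.

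For the $G_k^c\cap F_k$-piece, we iterate the first clause of $f_{i+j}$ over $j=1,\ldots,k$ to obtain
\begin{equation*}
|\tilde{\mathcal I}^R_{i+k}|\gtrsim\la n\,\exp\!\Bigl(-(2+\varepsilon)k-\sum_{j=1}^{k}\zeta_{i+j}\Bigr),
\end{equation*}
handling those indices $j$ on which only the second clause of $f_{i+j}$ is active by a separate branch: together with $E'_k$, that clause forces $\zeta_{i+j-1}>\log(1/\varepsilon)$, a rare event dispatched by a union bound. Lemma~\ref{lm:orderfail} then upper-bounds the conditional failure probability at round $i+k$ by
$\lesssim_\alpha(\la^2 n)^{-\alpha}e^{\alpha(2+\varepsilon)k}\E\!\bigl[\exp\bigl(\alpha\sum_{j}\zeta_{i+j}\bigr)\bigr]$.
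Thanks to the truncation $\zeta_{i+j-1}\le\log(1/\varepsilon)$ supplied by the second-clause branch of $F_k$, each per-round factor $\E[e^{\alpha(\zeta_{i+j}\wedge\log(1/\varepsilon))}]$ is finite and equals $\alpha\log(1/\varepsilon)+1$; pairing with $\Pr(E'_k)\le(1-e^{-\varepsilon})^k$ gives a product at most $\bigl[\varepsilon(\alpha\log(1/\varepsilon)+1)e^{\alpha(2+\varepsilon)}\bigr]^k$, which is $\le(\la^2 n)^{-\alpha}2^{-k}$ for $\varepsilon$ small relative to $\alpha$.

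For the $G_k^c\cap F_k^c$-piece, $f_{i+j}^c$ holds for some $j\le k$. Conditionally on $\sigma(\mathcal F_{\tilde{\tau}^R_{i+j-1}},\zeta_{i+j-1},\xi_{i+j})$, the number of leaves in $\tilde{\mathcal I}^R_{i+j-1}$ still infected at $\tilde{\tau}^R_{i+j}$ is a Binomial with mean at least $\Theta(\la n)\,e^{-(\zeta_{i+j-1}+\xi_{i+j}+O(1))}$ (using inductively the $G'_{i+j-1}$ lower bound and the $G_k^c$ upper bound to see that a constant fraction of the previous round's infected leaves are in play), so a multiplicative Chernoff bound yields $\Pr(f_{i+j}^c\mid\cdot)\le\exp(-\Omega(\la n))$. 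A union bound over $j\le k\le M_1=O(\log(\la^2 n))$ produces a contribution $\le(\la^2 n)^{-\alpha}M_1^{-1}$, in fact far smaller.

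\textbf{Main obstacle.} The delicate step is the $G_k^c\cap F_k$-piece: the quantity $\exp(\alpha\sum_j\zeta_{i+j})$ lies precisely at the boundary of integrability of the $\Exp(\alpha)$ moment generating function, so the $\log(1/\varepsilon)$-truncation supplied by the second clause of $f_{i+j}$ combined with $E'_k$ is essential. Ensuring that the combined per-round factor $(1-e^{-\varepsilon})\,\E[e^{\alpha(\zeta\wedge\log(1/\varepsilon))}]e^{\alpha(2+\varepsilon)}$ is strictly less than $1/2$, so as to produce geometric $2^{-k}$ decay rather than a polynomial blow-up, requires a careful coordination between the choices of $\varepsilon$, $\varepsilon_0^+$, and $\alpha$.
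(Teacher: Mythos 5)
Your three-way decomposition into $G$, $G^c\cap F$, $G^c\cap F^c$ is the same as the paper's, but two of the three pieces have genuine gaps, and both trace back to the fact that you drop the induction on $k$ on which the paper's argument rests. For the $G_k$-piece: the claim that the conditioning forces $g_{i+1}$ assumes $C\ge 1$, which is not available ($C$ comes from Lemma \ref{lm:goodRoundBdedTrans} and may be small; the paper explicitly allows $C<1$), and, more importantly, your substitute estimate does not reach the target. Bounding $|\tilde{\mathcal I}_{i+k}^R|\ge\frac{B^3\la n}{2}Z$ with $Z=\max_{j\le k}\xi_{i+j}$ and applying Lemma \ref{lm:orderfail} leaves you with a factor $\E\left(Z^{-\alpha}1_{E'_k}\,\middle\vert\, E\right)$; for $2\le k<\alpha$ (a nonempty range whenever $\alpha>2$) this expectation is dominated by the lower truncation $\xi_{i+j}\ge(\la^2 n)^{-\alpha\varepsilon_0^+}$ and is of order $(\la^2 n)^{\alpha\varepsilon_0^+(\alpha-k)}$, so your bound is $(\la^2 n)^{-\alpha+\alpha\varepsilon_0^+(\alpha-k)}$, a polynomially large loss against the required $C'(\la^2 n)^{-\alpha}(M_1^{-1}+2^{-k})$, which for bounded $k$ is just a constant multiple of $(\la^2 n)^{-\alpha}$ (and recovering the sharp exponent $\alpha$ is the entire point of this lemma, beyond Lemma \ref{lm:SurvivalProbOfMConsecRound}). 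The paper closes exactly this case by re-centering at an intermediate round $i+j$ with $|\tilde{\mathcal I}_{i+j-1}^R|>\la n$ and invoking the induction hypothesis for the remaining $k-(j-1)$ rounds, paying only $\Pr(E'_{j-1})\le\varepsilon^{j-1}$ plus a negligible time-overflow term; you mention re-centering but never use it, and without the induction hypothesis there is nothing to re-center onto.

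For the $G^c_k\cap F_k$-piece the repair of the moment-generating-function divergence does not work as described. The event $\{\zeta_{i+j-1}>\log(1/\varepsilon)\}$ has probability $\varepsilon^{\alpha}$, a constant, so it cannot be ``dispatched by a union bound'' against a target of size $(\la^2 n)^{-\alpha}2^{-k}$; moreover membership in that event does not tell you which clause of $f_{i+j}$ is active, so capping $\zeta$ inside $\E\left(e^{\alpha(\zeta\wedge\log(1/\varepsilon))}\right)$ is only legitimate off that event, and on it you still need a lower bound on the infected count to feed into Lemma \ref{lm:orderfail}. The device that makes this piece work is to keep, for every $j$, the combined per-round factor: on $G'_{i+k}\cap G^c_{i+k}\cap F_k$ one has $|\tilde{\mathcal I}_{i+j}^R|\ge\frac{B^3}{4}|\tilde{\mathcal I}_{i+(j-1)}^R|\left(e^{-\zeta_{i+(j-1)}}+\xi_{i+j}\right)$ regardless of which clause holds, hence $|\tilde{\mathcal I}_{i+k}^R|\ge\left(\frac{B^3}{4}\right)^k C\la n\prod_{j=1}^k\left(e^{-\zeta_{i+(j-1)}}+\xi_{i+j}\right)$, and after Lemma \ref{lm:orderfail} the per-round quantity is $\E\left(\left(e^{-\zeta}+\xi\right)^{-\alpha}1_{\{\xi\le\varepsilon\}}\right)=O(\varepsilon\log(1/\varepsilon))$, made smaller than $1/2$ by taking $\varepsilon$ small -- no truncation of $\zeta$ and no exponential moment are needed. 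Your $G^c_k\cap F_k^c$ piece is in the right spirit (it mirrors the paper's base-case Chernoff argument), though as written it invokes lower bounds on intermediate counts via ``$G'_{i+j-1}$'' that are not part of the event being intersected, so it too needs the inductive bookkeeping. As it stands the proposal does not prove the lemma.
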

We will prove Lemma \ref{lm:AEG} by induction on $k$. The role of $\tilde{\mathcal E}_{i, k}$ as a technical enlargement of $\tilde{\mathcal E}$ is now clearer: at each step of induction, we decrease $\tilde{\tau}_{i + k}^R$ by $\sqrt{L} / M_1$ until we reach $k = M_1$, in which case it coincides with the original event $\tilde{\mathcal E}$. To outline the induction step, we will split the above into the intersection with the following events: $G_k$, $G_k^c \cap F_k^c$, and $G_k^c \cap F_k$. 
\begin{itemize}
	\item On $G_k$, within a run of $k$ rounds, some $j$-th round has an abundance of infected vertices when the root recovers, and so we can use our induction hypothesis to bound the probability of surviving the first $j$ rounds, and then the remaining $(k - j)$ rounds (conditioned properly). 
	\item On $G_k^c \cap F_k^c$, note that $F_k^c = \bigcup_{j = 1}^k f_{i + j}^c$, and each $f_{i + j}^c$ can be dealt with by Chernoff bounds. 
	\item On $G_k^c \cap F_k$, the contribution to $|\tilde{I}^R_{i + k}|$ by its previous rounds are moderate, allowing for some averaging over all previous rounds.
\end{itemize}

\subsection{Proof of Lemma \ref{lm:AEG}}\label{subsec:proofLemHardInduction} For clarity of presentation, when we refer to Chernoff bounds, the tail events are negligible (exponential decay), and we will not write it out explicitly. We will sometimes even use "w.e.h.p." ("with exponentially high probability") instead of referring to Chernoff bounds.

\subsubsection{Base case $k = 1$}
We consider two cases, corresponding to two contributions to $|\tilde{I}_{i+1}^R|$. 

\textbf{Case 1:} $\xi_{i+1} \le e^{-\zeta_i}$. Then the main contribution comes from $|\tilde{\mathcal I}_i^R| \ge C \la n$. On $E$, since $e^{-\zeta_i} \ge \xi_{i + 1} (\la^2 n)^{-\alpha \varepsilon_0^+}$, we have
$$ |\tilde{\mathcal I}_i^S| \ge \text{Bin}\left(|\tilde{\mathcal I}_i^R|, e^{-\zeta_i }\right) \succeq \text{Bin}\left(C\la n, (\la^2 n)^{-\alpha \varepsilon_0^+} \right), \qquad \text{w.e.h.p. } |\tilde{\mathcal I}_i^S| \ge \dfrac{|\tilde{\mathcal I}_i^R| e^{-\zeta_i}}{2} \ge \dfrac{C (\la^2 n)^{1 - \alpha \varepsilon_0^+}}{2\la}.$$
When $ |\tilde{\mathcal I}_i^S|$ is large, then the probability that each infected vertex recovers before reinfecting the root is $1/(\la + 1)$, so by another Chernoff bound, we have w.e.h.p.
$$ |\tilde{\mathcal I}_{i+1}| \ge \dfrac{|\tilde{\mathcal I}_i^S|}{2}. $$ 
Finally, on $|\xi_{i+1}| \le \varepsilon$, when $ |\tilde{\mathcal I}_{i+1}|$ is large, we have
$$ |\tilde{\mathcal I}^R_{i+1}| \ge \text{Bin}\left( |\tilde{\mathcal I}_{i+1}|, e^{-\varepsilon}\right), \qquad \text{w.e.h.p. }  |\tilde{\mathcal I}^R_{i+1}| \ge \dfrac{4}{e^2} |\tilde{\mathcal I}_{i+1}| e^{-\varepsilon}.$$
Combining all the above, w.e.h.p. $$  |\tilde{\mathcal I}_{i+1}^R|  \ge \dfrac{4}{e^2}  |\tilde{\mathcal I}_{i+1}| e^{-\varepsilon} \ge |\tilde{\mathcal I}_i^R| \exp\left(-(\zeta_i + 2 + \varepsilon)\right) \ge C \la n \exp\left(-(\zeta_i + 2 + \varepsilon)\right).$$

\textbf{Case 2:} $\xi_{i + 1} > e^{-\zeta_i}$. The main contribution then comes from newly infected leaves in the $(i + 1)$-th round. On $G'_{i+1}$, $|\tilde{\mathcal I}_{i+1}^R| \ge \frac{B^3 \la n}{2}\xi_{i+1}. $

From both cases, w.e.h.p. we have
\begin{align*}
	|\tilde{\mathcal I}_{i+1}^R| &\ge \max\left(C \la n \exp\left(-(\zeta_i + 2 + \varepsilon)\right), \dfrac{B^3 \la n}{2}\xi_{i+1}  \right) \\
	&\ge \dfrac{1}{2}\left(C \la n \exp\left(-(\zeta_i + 2 + \varepsilon)\right) + \dfrac{B^3 \la n}{2}\xi_{i+1}  \right) \\
	&\gtrsim_{\alpha, \varepsilon_0, \varepsilon_0^+} \la n \left( e^{-\zeta_i} + \xi_{i+1} \right).
\end{align*}
Let $C' > 0$ be the implicit constant in the above. Then
\begin{align*}
	&\Pr\left( A_{i+k}, E'_k, G'_{i+k} \ \bigg\vert\ \sigma\left(\mathcal F_{\tilde{\tau}_i^R}, 1_{{E}} \right), {E}, \tilde{\mathcal E}_{i, 1} \right) \\
	=&\ 1_{{E}}1_{\tilde{\mathcal E}_{i, 1}} O((\la^2 n)^{-\alpha}) + 1_{{E}}1_{\tilde{\mathcal E}_{i, 1}} \Pr\left( A_{i+1},  |\tilde{\mathcal I}_i^R| \ge C'\la n\left( e^{-\zeta_i} + \xi_{i+1} \right) \ \bigg\vert\ \sigma\left(\mathcal F_{\tilde{\tau}_i^R}, 1_{{E}} \right)  \right) \\
	\le&\ 1_{{E}}1_{\tilde{\mathcal E}_{i, 1}} O((\la^2 n)^{-\alpha}) + 1_{{E}}1_{\tilde{\mathcal E}_{i, 1}} (C'\la^2 n)^{-\alpha} \underbrace{\E\left( \left( e^{-\zeta_i} + \xi_{i+1} \right)^{-\alpha}  \ \bigg\vert\ \sigma\left(\mathcal F_{\tilde{\tau}_i^R}, 1_{{E}} \right) \right)}_{= C_{\alpha, \ep_0, \ep_0^+}<\infty,}, \text{ by Lemma \ref{lm:fail}}
\end{align*}
proving the base case. From the proof, it is also clear that we can use Chernoff bounds for the event 
$$f_{i+j}^c := \left\{\xi_{i+j} \le e^{-\zeta_{i +(j-1)}}, |\tilde{\mathcal I}_{i+j}^R| < |\tilde{\mathcal I}_{i+j-1}^R| \exp\left( -(\zeta_{i+(j-1)} + 2 + \varepsilon) \right)\right\},$$
and so we will omit the details on bounding the event $G_k^c \cap F_k^c$.

\subsubsection{Induction step}
For the induction step, keeping the notations as in Lemma \ref{lm:AEG}, and suppose it holds up to $(k - 1)$. We are done if the following of inequalities holds. 
\begin{align}
	\Pr\left( A_{i+k}, E'_k, G'_{i+k}, G_{i+k} \ \bigg\vert\  \sigma\left(\mathcal F_{\tilde{\tau}_i^R}, 1_{{E}} \right), {E}, \tilde{\mathcal E}_{i, k}  \right) &\le 1_{E}1_{\tilde{\mathcal E}_{i, k} } C'(\la^2 n)^{\alpha} \dfrac{\varepsilon}{1 - \varepsilon}\left(  M_1^{-1} + 2^{-k} \right),  \label{eq:AEGG} \\
	\Pr\left( A_{i+k}, E'_k, G'_{i+k}, G^c_{i+k}, F_k \ \bigg\vert\  \sigma\left(\mathcal F_{\tilde{\tau}_i^R}, 1_{{E}} \right), {E}, \tilde{\mathcal E}_{i, k}  \right) &\le 1_{E}1_{\tilde{\mathcal E}_{i, k} } \dfrac{C'}{2}(\la^2 n)^{\alpha} (2^{-k}). \label{eq:AEGGcF}
\end{align}

To prove \eqref{eq:AEGG}, we have
\begin{align}
	&\Pr\left( A_{i+k}, E'_k, G'_{i+k}, G_{i+k} \ \bigg\vert\  \sigma\left(\mathcal F_{\tilde{\tau}_i^R}, 1_{{E}} \right), {E}, \tilde{\mathcal E}_{i, k} \right) \nonumber \\
	\le&\ \sum_{j=2}^k \Pr\left( A_{i+k}, E'_k, G'_{i+k}, g_{i+j}, \tilde{\tau}^R_{i + (j-1)} - \tau_{i}^R \le M_1^{-1}\sqrt{L} \ \bigg\vert\  \sigma\left(\mathcal F_{\tilde{\tau}_i^R}, 1_{{E}} \right), {E}, \tilde{\mathcal E}_{i, k} \right) \nonumber  \\
	&+ \sum_{j=2}^{k}\Pr\left( \tilde{\tau}^R_{i + (j-1)} - \tau_{i}^R > M_1^{-1}\sqrt{L} \ \bigg\vert\  \sigma\left(\mathcal F_{\tilde{\tau}_i^R}, 1_{{E}} \right), {E},  \tilde{\mathcal E}_{i, k} \right). \label{eq:AlphaGe1InductionPk1One}
\end{align} 
The second summation in \eqref{eq:AlphaGe1InductionPk1One} is negligible: note that 
\begin{align*}
	\E\left( \tilde{\tau}_{i + (j-1)}^R - \tilde{\tau}_i^R \ \bigg\vert\ \sigma\left( \mathcal F_{\tilde{\tau}_i^R}, 1_{{E}}\right) \right) &= \sum_{j' = 1}^{j-1} \E\left( \zeta_{ i + (j' - 1)} + (\tilde{\tau}_{i + j'} - \tilde{\tau}_{i + (j'-1)}^S) + \xi_{i +j'} \ \bigg\vert\ \sigma\left( \mathcal F_{\tilde{\tau}_i^R}, 1_{{E}}\right) \right) \\
	&= (j-1)\left(\alpha^{-1} + 2 (\log n + 1) +  ((\la^2n)^{-\alpha \varepsilon_0^+} + 1)\right) \le C''(\log n)^2,
\end{align*}
for some $C'' > 0$ dependent on $\alpha, \varepsilon_0, \varepsilon_0^+$, so we can use Markov inequality accordingly. For the first summation, on $g_{i + j}$, we need to make sure that $\tilde{\mathcal E}_{i + ( j- 1), k - 1}$ is satisfied when "restarting" the procedure at round $i + (j- 1)$. For $2 \le j \le k$, $\tilde{\tau}_i^R \le \sqrt{L}(3 - M_1^{-1}(k -1))$ and $\tilde{\tau}_{i + (j-1)}^R - \tilde{\tau}_i^R \le M_1^{-1}\sqrt{L}$ implies that $\tilde{\tau}_{i + (j-1)}^R \le  \sqrt{L}(3 - M_1^{-1}(k -2))$. Thus, by the induction hypothesis, starting at $\tilde{\tau}_{i + (j-1)}^R$, 
\begin{align}
	& \Pr\left( A_{i+k}, E'_k, G'_{i+k}, g_{i+j}, \tilde{\tau}^R_{i + (j-1)} \le \sqrt{L}(3 - M_1^{-1}(k -2)) \ \bigg\vert\  \sigma\left(\mathcal F_{\tilde{\tau}_{i + (j-1)}^R}, 1_{{E}} \right), {E} \right) \nonumber \\
	=&\ \Pr\left( A_{i + (j-1) + (k - (j-1))}, E'_k, G'_{i+k} \ \bigg\vert\  \sigma\left(\mathcal F_{\tilde{\tau}_{i + (j-1)}^R}, 1_{{E}}\right), {E}, E'_{j-1}, \tilde{\mathcal E}_{i + ( j- 1), k - 1} \right), \nonumber \\
	\le&\ 1_{{E}}1_{E'_{j-1}}1_{\tilde{\mathcal E}_{i + ( j- 1), k - 1}} C'(\la^2n)^{-\alpha}\left(M_1^{-1} + 2^{-(k - (j-1))} \right). \label{eq:AlphaGe1InductionPk1Two}
\end{align}
Since $E'_{j-1}$ is independent of $\mathcal F_{\tilde{\tau}_i^R}$ and that $\Pr\left( E'_{j-1} \mid E \right) \le \varepsilon^{j-1}$, taking $\E\left( \cdot \ \bigg\vert\ \sigma\left(\mathcal F_{\tilde{\tau}_{i}^R}, 1_{{E}} \right), {E}, \tilde{\mathcal E}_{i, k} \right)$ on both sides of \eqref{eq:AlphaGe1InductionPk1Two} and summing over all $2 \le j \le k$ yields \eqref{eq:AEGG}.

For \eqref{eq:AEGGcF}, the main idea is to observe that $G'_{i+k} \cap G_{i + k}^c \cap F_k$ implies, for all $1 \le j \le k$,
\begin{equation}\label{eq:implicationAEGGcF}
	|\tilde{\mathcal I}_{i + j}^R| \ge \dfrac{B^2}{4} |\tilde{\mathcal I}_{i + (j-1)}^R| \left( e^{-\zeta_{i +(j-1)}} + \xi_{i + j} \right).
\end{equation}
This means 
\begin{align*}
	|\tilde{\mathcal I}_{i+k}^R| \ge \left(\dfrac{B^2}{4}\right)^k |\tilde{\mathcal I}_i^R| \prod_{j=1}^k \left( e^{-\zeta_{i +(j-1)}} + \xi_{i + j} \right) \ge \left(\dfrac{B^2}{4}\right)^k (C \la n) \prod_{j=1}^k \left( e^{-\zeta_{i +(j-1)}} + \xi_{i + j} \right),
\end{align*}
and so by conditioning on $\mathcal F_{\tilde{\tau}_{i+k}^R}$, applying Lemma \ref{lm:fail}, and using Law of Total Expectation properly, we have
\begin{align*}
	&\Pr\left( A_{i+k}, E'_k, G'_{i+k}, G_{i+k}^c, F_k\ \bigg\vert\  \sigma\left(\mathcal F_{\tilde{\tau}_i^R}, 1_{{E}} \right), {E}, \tilde{\mathcal E}_{i, k}\right) \\
	\lesssim_{\alpha, \varepsilon_0, \varepsilon_0^+}&\ 1_{{E}}1_{\tilde{\mathcal E}_{i, k}} (\la^2 n)^{-\alpha} \E\left( 1_{E'_k}\left( \dfrac{B^2}{4}\right)^{-\alpha k} \prod_{j=1}^k \left( e^{-\zeta_{i +(j-1)}} + \xi_{i + j} \right)^{-\alpha} \ \bigg\vert\ \sigma\left(\mathcal F_{\tilde{\tau}_i^R}, 1_{{E}}\right)  \right) \\
	=&\ 1_{{E}}1_{\tilde{\mathcal E}_{i, k}} (\la^2 n)^{-\alpha} \left( \left( \dfrac{4}{B^2} \right)^{\alpha} \int_{0}^{\varepsilon} \int_0^{\infty} \dfrac{\alpha e^{-\alpha u} }{ (e^{-u} + v)^{\alpha} } du dv \right)^k,
\end{align*}
the last inequality holds since $\{\xi_{i+j}\}_{1 \le j \le k}$, $\{\zeta_{i+(j-1)}\}_{1 \le j \le k}$ are mutually independent and are independent of $\mathcal F_{\tilde{\tau}_i^R}$. All that remains for us is to (i) verify the implication \eqref{eq:implicationAEGGcF}, and (ii) prove that 
$$\left( \dfrac{4}{B^2} \right)^{\alpha} \int_{0}^{\varepsilon} \int_0^{\infty} \dfrac{\alpha e^{-\alpha u} }{ (e^{-u} + v)^{\alpha} } du dv \le \dfrac{1}{2}. $$
The latter is a matter of simple computation: 	
\begin{align*}
	\int_{0}^{\varepsilon} \int_0^{\infty} \dfrac{\alpha e^{-\alpha u} }{ (e^{-u} + v)^{\alpha} } du dv &\lesssim_{\alpha} \int_{0}^{\varepsilon} \int_0^{\infty} \dfrac{\alpha e^{-\alpha u} }{ e^{-\alpha u} + v^{\alpha} } du dv = \int_{0}^{\varepsilon}\left( \log(1 + v^{\alpha}) - \alpha \log v \right) dv \\
	&\le \varepsilon \log(1 + \varepsilon^{\alpha}) - \alpha \varepsilon \log \varepsilon + \alpha\varepsilon,
\end{align*}
which tends to $0$ as $\varepsilon \searrow 0$, and we can choose $\varepsilon_0$ such that $\varepsilon$ is sufficiently small. The former is routine event analysis: for each $j$,
\begin{itemize}
	\item If $\xi_{i+j} \ge e^{-\zeta_{i +(j-1)}}$, then on $G'_{i+k}$, we have $$|\tilde{\mathcal I}^R_{i+j}| \ge \dfrac{B^2}{2} \la n \xi_{i+j} \ge \dfrac{B^2}{4}\la n \left( e^{-\zeta_{i +(j-1)}} + \xi_{i + j} \right)  \ge \dfrac{B^2}{4}| \tilde{\mathcal I}^R_{i + (j-1)}| \left( e^{-\zeta_{i +(j-1)}} + \xi_{i + j} \right), $$
	the last inequality holds on $G_{i+k}^c$.
	
	\item If $\xi_{i+j} \le e^{-\zeta_{i +(j-1)}}$, then 
	\begin{align*}
		|\tilde{\mathcal I}_{i+j}^R| &\ge \max\left(|\tilde{\mathcal I}_{i + (j-1)}^R|\exp(-(\xi_{i +(j-1)} + 2 + \varepsilon)), \dfrac{B^2}{2} \la n \xi_{i +j} \right) \qquad \text{ on $f_{i + j} \cap G'_{i + k}$} \\
		&\ge \dfrac{1}{2} \left(|\tilde{\mathcal I}_{i + (j-1)}^R|\exp(-(\xi_{i +(j-1)} + 2 + \varepsilon)) + \dfrac{B^2}{2} \la n \xi_{i +j} \right) \\
		&\ge \dfrac{1}{2}|\tilde{\mathcal I}_{i + (j-1)}^R| \left(\exp(-(\xi_{i +(j-1)} + 2 + \varepsilon)) + \dfrac{B^2}{2} \xi_{i +j} \right) \qquad \text{ on $G^c_{i + k}$} \\
		&\ge \dfrac{B^2}{4}| \tilde{\mathcal I}^R_{i + (j-1)}| \left( e^{-\zeta_{i +(j-1)}} + \xi_{i + j} \right).
	\end{align*}
\end{itemize}

\begin{remark}[Lower bound for $\la \ge 1/2$]
	The outline at the beginning of Section \ref{sec:ProofLowBdAlphaGe1} still holds, but now we set $M_0 := \left\lceil \frac{K \alpha \log n}{\log(1/\varepsilon)} \right\rceil$ and $M_1 :=  \left\lceil \frac{\varepsilon_0 \alpha \log n}{\log(1/\varepsilon)} \right\rceil$ instead. We also note that Lemma \ref{lm:numSdivNisAlwaysPosSustainedSIRS} does not make any assumption on the range of $\la$. Throughout the entire analysis from the beginning of Section \ref{sec:ProofLowBdAlphaGe1}, we can replace $\la$ by $1$, and modify $C$ in Corollary \ref{cor:goodRoundBdedTrans} suitably according to Remark \ref{rem:uppBoundWhenLaGe12}. Together with Remark \ref{rem:OnlyPlaceForLambda}, this completes our proof of the lower bound.
\end{remark}

	\bibliographystyle{alpha}
\bibliography{refs}

\appendix
\section{Calculate the second sum in \eqref{eq:GautschiSum}}\label{app:GautschiSum}
We set out to prove that for $0 < \la \le 1/2$,
\begin{equation}\label{eq:GautschiSum2}
	\sum_{b=0}^a \dfrac{\Gamma(b + \alpha)}{\Gamma(b+1)} \cdot \dfrac{1}{(\la + 1)^b} \approx_{\alpha} \dfrac{1}{\la^a} \approx_{\alpha} \dfrac{1}{(1-x)^{\alpha}},
\end{equation}
where $x := \frac{1}{\la + 1} \in \left[ \frac{2}{3}, 1 \right)$. Suppose $\alpha > 1$. Denote
\begin{align*}
	S_{\alpha}(x) &:= \sum_{b \ge 0} \dfrac{\Gamma(b+\alpha)}{\Gamma(b+1)}x^b, 
	\intertext{then we can show that}
	(1-x)S_{\alpha}(x) &= (\alpha - 1)S_{\alpha - 1}(x),
\end{align*}
and so we only need to prove \eqref{eq:GautschiSum2} for all $\alpha \in (0, 1]$ and use induction. The claim is straightforward for $\alpha = 1$, as $S_1(x) = \frac{1}{1-x}$. Recall Gautschi's inequality: for $b > 0$ and $\alpha \in (0,1)$, we have
$$b^{1-\alpha} < \dfrac{\Gamma(b+1)}{\Gamma(b+\alpha)} < (b+1)^{1-\alpha}.$$
Then we have 
\begin{equation*}
	S_{\alpha}(x) = \Gamma(\alpha) + \sum_{b\ge 1} \dfrac{\Gamma(b+\alpha) b^{1-\alpha}}{\Gamma(b+1)} \cdot \dfrac{x^b}{b^{1-\alpha}} = \Theta_{\alpha}\left( \sum_{b \ge 1} \dfrac{x^b}{b^{1-\alpha}} \right),
\end{equation*}
and since the function $b \mapsto \dfrac{x^b}{b^{1-\alpha}}$ is decreasing on $[1, \infty)$, we can approximate the last sum using integrals, i.e.
\begin{align*}
	S_{\alpha}(x) &= \Theta\left( \int_1^{\infty} \dfrac{x^y}{y^{1-\alpha}}dy \right).
\end{align*}
It remains to evaluate the asymptotics for the integral in the last display. Substituting $u = x^{-y}$ gives
\begin{align*}
	\int_1^{\infty} \dfrac{x^y}{y^{1-\alpha}}dy&= \dfrac{1}{\left( \log \frac{1}{x} \right)^{\alpha}}\int_{1/x}^{\infty}\dfrac{du}{u^2(\log u)^{1-\alpha}} \\
	&= \dfrac{1}{(1-x)^{\alpha}} \left( \dfrac{1-x}{\log \frac{1}{x}} \right)^{\alpha}\int_{1/x}^{\infty}\dfrac{du}{u^2(\log u)^{1-\alpha}}.
\end{align*}
Since $\frac{2}{3} \le x < 1$, we get that 
\begin{align*}
	\left( \dfrac{1-x}{\log \frac{1}{x}} \right)^{\alpha} &= \Theta(1), \\
	\int_{1/x}^{\infty}\dfrac{du}{u^2(\log u)^{1-\alpha}} &\in \left(\int_{3/2}^{\infty}\dfrac{du}{u^2(\log u)^{1-\alpha}}, \int_{1}^{\infty}\dfrac{du}{u^2(\log u)^{1-\alpha}} \right) = \Theta(1),
\end{align*}
and combining all the above gives the desired estimate. Note that this also means for $\la > 1/2$, the LHS sum in \eqref{eq:GautschiSum2} becomes $\Theta_{\alpha}(1)$ (upper bound by replacing $\la = 1/2$, lower bound by the term when $b = 0$) instead of $\Theta_{\alpha}(\la^{-\alpha})$.

\section{Proof of Lemma \ref{lm:controlResidualY}}\label{app:lm:control}
The proof below is completely identical for both processes, so we will only consider $(Y_t)$ and omit the subscript $Y$. Since $\tau_i < \infty$, so is $\tau_i^R$ and $\tau_i^S$. If $\tau > \tau_i^S$, then there is at least one infected leaf by the time the root becomes susceptible again, i.e. $|\mathcal I_i^S| \ge 1$. For short, let $E'_i := \{\tau_i < \infty, \tau_{i+1} = \infty \}$ be the event that the first $(i-1)$ rounds are successful, and round $i$ fails.

Slightly abusing notations, conditioned on $\sigma(\mathcal F_{\tau_i^S}, 1_{E'_i})$, let the infected vertices at $\tau_i^S$ be $v_1, \dots, v_{|\mathcal I_i^S|}$. Starting from $\tau_i^S$, for each $j$, let $\theta'_j \sim \Exp(\la)$ be the time that $v_j$ sends an infection to the root, and $\xi'_j \sim \Exp(1)$ be the recovery time of $v_j$. Then on $E'_i$, we have $\xi'_j < \theta'_j$ for all $1 \le j \le |\mathcal I_i^S|$. Note that
$$1_{E'_i} = 1_{\{|\mathcal I_i^S| \ge 1\}}\left( \prod_{j=1}^{|\mathcal I_i^S|} 1_{\{\xi'_j < \theta'_j\}} \right) + 1_{\{|\mathcal I_i^S| = 0\}}, $$
then we have $1_{\{|\mathcal I_i^S| \ge 1\}}\left( \prod_{j=1}^{|\mathcal I_i^S|} 1_{\{\xi'_j < \theta'_j\}} \right) \in \sigma\left(\mathcal F_{\tau_i^S}, 1_{E'_i}\right)$. Moreover, $1_{\{\tau > \tau_i^S\}}  = 1_{\{|\mathcal I_i^S| \ge 1\}}$. Thus, 
\begin{align}
	& \E\left( 1_{E'_i}1_{\{\tau > \tau_i^S\}} (\tau - \tau_i^S) \ \bigg\vert\ \sigma\left(\mathcal F_{\tau_i^S}, 1_{E'_i}\right) \right)\nonumber \\
	=&\ \E\left( 1_{\{|\mathcal I_i^S| \ge 1\}}\left(\max_{1 \le j \le |\mathcal I_i^S|} \xi'_j\right) \prod_{j=1}^{|\mathcal I_i^S|} 1_{\{\xi'_j < \theta'_j\}}  \ \bigg\vert\ \sigma\left(\mathcal F_{\tau_i^S}, 1_{E'_i}\right) \right), \nonumber 
	\intertext{so taking $E\left( \cdot \ \bigg\vert\ E'_i \right)$ on both sides gives}
	& \E\left( 1_{\{\tau > \tau_i^S\}}(\tau - \tau_i^S)\ \bigg\vert\ \tau_i < \infty, \tau_{i+1} = \infty \right) \nonumber \\
	=&\ \E\left( 1_{\{|\mathcal I_i^S| \ge 1\}}\left(\max_{1 \le j \le |\mathcal I_i^S|} \xi'_j\right) \prod_{j=1}^{|\mathcal I_i^S|} 1_{\{\xi'_j < \theta'_j\}}  \ \bigg\vert\ \tau_i < \infty, \tau_{i+1} = \infty \right), \nonumber 
	\intertext{and since $E'_i \supseteq \left\{|\mathcal I_i^S| \ge 1, \xi'_j < \theta'_j \forall 1 \le j \le |\mathcal I_i^S|\right\}$, the above is}
	\le&\ \E\left( \max_{1 \le j \le |\mathcal I_i^S|} \xi'_j \ \bigg\vert\ |\mathcal I_i^S| \ge 1, \xi'_j < \theta'_j\ \forall 1 \le j \le |\mathcal I_i^S| \right). \label{eq:convertCondExpLog1}
\end{align}
Denote $\zeta'_j := \min\{\xi'_j, \theta'_j\}$, and $k_j := 1_{\{\xi'_j < \theta'_j\}}$, then conditioned on $\mathcal F_{\tau_i^S}$, 
\begin{itemize}
	\item $\xi'_j, \theta'_j$ are independent exponential random variables with rate $1$ and $\la$ respectively, and so
	\item $\zeta'_j$ is also exponentially distributed with rate $(\la + 1)$, $k_j$ follows a Bernoulli distribution $\sim \text{Ber}\left(\frac{1}{\la + 1}\right)$, and
	\item $\zeta'_j, k_j$ are independent.
\end{itemize} 
Therefore, writing $\prod:=\prod_{j=1}^{|\mathcal I_i^S|} 1_{\{k_j = 1\}}$ for the rest of this proof for simplicity, using the definition of conditional expectation and then taking a layer of conditional expectation over $\mathcal F_{\tau_i^S}$, we get
\begin{align}
	&\E\left( \max_{1 \le j \le |\mathcal I_i^S|} \xi'_j \ \bigg\vert\ |\mathcal I_i^S| \ge 1, \xi'_j < \theta'_j\ \forall 1 \le j \le |\mathcal I_i^S| \right) \nonumber \\
	=&\ \dfrac{\E\left(\left(\max_{1 \le j \le |\mathcal I_i^S|} \zeta'_j\right)1_{\{|\mathcal I_i^S| \ge 1\}} \prod\right)}{\E\left(1_{\{|\mathcal I_i^S| \ge 1\}} \prod\right)}  \nonumber \\
	=&\ \dfrac{\E\left(1_{\{|\mathcal I_i^S| \ge 1\}} \E\left(\max_{1 \le j \le |\mathcal I_i^S|} \zeta'_j \ \bigg\vert\ \mathcal F_{\tau_i^S}\right) \E\left(\prod  \ \bigg\vert\ \mathcal F_{\tau_i^S} \right) \right)}{\E\left(1_{\{|\mathcal I_i^S| \ge 1\}} \E \left( \prod \ \bigg\vert\ \mathcal F_{\tau_i^S} \right)\right)}, \nonumber
	\intertext{By Lemma \ref{lm:MaxOfExpIsHarmo}, since $\zeta'_1, \dots, \zeta'_{|\mathcal I_i^S|}$ conditioned on $\mathcal F_{\tau_i^S}$ are independent exponentially distributed with rate $(\la + 1)$, the above is }
	\le&\  \dfrac{\E\left(1_{\{|\mathcal I_i^S| \ge 1\}}\left( \dfrac{1 + \log|\mathcal I_i^S|}{\la + 1} \right) \E\left(\prod  \ \bigg\vert\ \mathcal F_{\tau_i^S} \right) \right)}{\E\left(1_{\{|\mathcal I_i^S| \ge 1\}} \E \left( \prod \ \bigg\vert\ \mathcal F_{\tau_i^S} \right)\right)}
	< 2 \log n. \label{eq:convertCondExpLog2}
\end{align}
The desired result directly follows from \eqref{eq:convertCondExpLog1} and \eqref{eq:convertCondExpLog2}.

\section{Proof of Lemma \ref{lm:choiceOfParamTechnical}}\label{sec:proofOfChoiceOfParamTechnical}
Let $K' := \dfrac{K}{\log (1/\varepsilon)}$, then we can make $K > 1$ arbitrarily large and $\varepsilon \in (0, 1/8)$ arbitrarily small, as long as it is consistent with our choice of $K'$. Then it remains to choose $K' > 0$ and $t > 0$ such that for $T := t - \log(1 + t)$, $$TK' \ge 1 - \varepsilon_0, \qquad tK' \le 1 - \dfrac{\varepsilon_0^2}{2}, \qquad K' \alpha \left(\varepsilon + 1 + \dfrac{1}{\alpha}\right) \le \dfrac{\varepsilon_0^2}{4}.$$
Since $\varepsilon < 1$, choosing $$ K' := \dfrac{\varepsilon_0^2}{4(2\alpha + 1)} $$
satisfies the third constraint. With this choice of $K'$, we are to find $t > 0$ such that
\begin{align*}
T &\ge \dfrac{1 - \varepsilon_0}{K'} = 4(2\alpha + 1)\left(\dfrac{1}{\varepsilon_0^2} - \dfrac{1}{\varepsilon_0}\right), \\
t &\le \dfrac{1 - \varepsilon_0^2/2}{K'} = 4(2\alpha + 1)\left(\dfrac{1}{\varepsilon_0^2} - \dfrac{1}{2}\right).
\end{align*}
Since $F: t \mapsto t - \log(1 + t)$ is a continuous, increasing, and bijective map from $[0, \infty)$ to $[0, \infty)$, if we can show that 
$$F\left( 4(2\alpha + 1)\left(\dfrac{1}{\varepsilon_0^2} - \dfrac{1}{2}\right) \right) > 4(2\alpha + 1)\left(\dfrac{1}{\varepsilon_0^2} - \dfrac{1}{\varepsilon_0}\right),$$
then we can find such a $t > 0$. Now,
\begin{align*}
&\ F\left( 4(2\alpha + 1)\left(\dfrac{1}{\varepsilon_0^2} - \dfrac{1}{2}\right) \right) - 4(2\alpha + 1)\left(\dfrac{1}{\varepsilon_0^2} - \dfrac{1}{\varepsilon_0}\right) \\
=&\ 4(2\alpha + 1)\left(\dfrac{1}{\varepsilon_0} - \dfrac{1}{2}\right) - \log\left(1 + 4(2\alpha + 1)\left(\dfrac{1}{\varepsilon_0^2} - \dfrac{1}{2}\right)  \right) \\
\ge&\  4(2\alpha + 1)\left(\dfrac{1}{\varepsilon_0} - \dfrac{1}{2}\right) - \log\left(4(2\alpha + 1) \dfrac{1}{\varepsilon_0^2}\right) \\
=&\ \left( \dfrac{4(2\alpha+ 1)}{\varepsilon_0} - 2 \log\dfrac{1}{\varepsilon_0} \right) - \left( 2(2\alpha + 1) + \log 4(2\alpha + 1) \right),
\intertext{and since the function $x \mapsto 4(2\alpha + 1)x - 2 \log x$ is increasing on $[100(2\alpha + 1), \infty)$, the above is at least} 
\ge&\ 400(2\alpha + 1)^2 - 2 \log 100(2\alpha + 1) - 2(2\alpha + 1) - \log 4(2\alpha + 1) > 0,
\end{align*}
for all $\alpha> 0$. This completes the proof.

\end{document}